\newtheorem{theorem}{Theorem}[section]
\newtheorem{lemma}{Lemma}[section]
\theoremstyle{definition}
\newtheorem{definition}{Definition}
\newtheorem{remark}{Remark}[section]
\numberwithin{equation}{section}
\def\Xint#1{\mathchoice
{\XXint\displaystyle\textstyle{#1}}%
{\XXint\textstyle\scriptstyle{#1}}%
{\XXint\scriptstyle\scriptscriptstyle{#1}}%
{\XXint\scriptscriptstyle%
\scriptscriptstyle{#1}}%
\!\int}
\def\XXint#1#2#3{{\setbox0=\hbox{$#1{#2#3}{%
\int}$ }
\vcenter{\hbox{$#2#3$ }}\kern-.6\wd0}}
\def\barint{\,\Xint -}
\def\Yint#1{\mathchoice
    {\YYint\displaystyle\textstyle{#1}}%
    {\YYint\textstyle\scriptstyle{#1}}%
    {\YYint\scriptstyle\scriptscriptstyle{#1}}%
    {\YYint\scriptscriptstyle\scriptscriptstyle{#1}}%
      \!\iint}
\def\YYint#1#2#3{{\setbox0=\hbox{$#1{#2#3}{\iint}$}
    \vcenter{\hbox{$#2#3$}}\kern-.5\wd0}}
\def\longdash{{-}\mkern-3.5mu{-}}
\def\bariint{\, \Yint\longdash}
\def\Zint#1{\mathchoice
    {\ZZint\displaystyle\textstyle{#1}}%
    {\ZZint\textstyle\scriptstyle{#1}}%
    {\ZZint\scriptstyle\scriptscriptstyle{#1}}%
    {\ZZint\scriptscriptstyle\scriptscriptstyle{#1}}%
      \!\iiint}
\def\ZZint#1#2#3{{\setbox0=\hbox{$#1{#2#3}{\iiint}$}
    \vcenter{\hbox{$#2#3$}}\kern-.5\wd0}}
\def\loongdash{{-}\mkern-3mu{-}\mkern-3mu{-}}
\def\bariiint{\, \Zint\loongdash}
\newcommand{\beq}{\begin{equation}}
\newcommand{\bea}[1]{\begin{array}{#1} }
\newcommand{\eeq}{ \end{equation}}
\newcommand{\ea}{ \end{array}}
\def \d {{\delta}}
\def\mean#1{\mathchoice%
          {\mathop{\kern 0.2em\vrule width 0.6em height 0.69678ex depth -0.58065ex
                  \kern -0.8em \intop}\nolimits_{\kern -0.4em#1}}%
          {\mathop{\kern 0.1em\vrule width 0.5em height 0.69678ex depth -0.60387ex
                  \kern -0.6em \intop}\nolimits_{#1}}%
          {\mathop{\kern 0.1em\vrule width 0.5em height 0.69678ex
              depth -0.60387ex
                  \kern -0.6em \intop}\nolimits_{#1}}%
          {\mathop{\kern 0.1em\vrule width 0.5em height 0.69678ex depth -0.60387ex
                  \kern -0.6em \intop}\nolimits_{#1}}}
\def\vintslides_#1{\mathchoice%
          {\mathop{\kern 0.1em\vrule width 0.5em height 0.697ex depth -0.581ex
                  \kern -0.6em \intop}\nolimits_{\kern -0.4em#1}}%
          {\mathop{\kern 0.1em\vrule width 0.3em height 0.697ex depth -0.604ex
                  \kern -0.4em \intop}\nolimits_{#1}}%
          {\mathop{\kern 0.1em\vrule width 0.3em height 0.697ex depth -0.604ex
                  \kern -0.4em \intop}\nolimits_{#1}}%
          {\mathop{\kern 0.1em\vrule width 0.3em height 0.697ex depth -0.604ex
                  \kern -0.4em \intop}\nolimits_{#1}}}
\newcommand{\aveint}[2]{\mathchoice%
          {\mathop{\kern 0.2em\vrule width 0.6em height 0.69678ex depth -0.58065ex
                  \kern -0.8em \intop}\nolimits_{\kern -0.45em#1}^{#2}}%
          {\mathop{\kern 0.1em\vrule width 0.5em height 0.69678ex depth -0.60387ex
                  \kern -0.6em \intop}\nolimits_{#1}^{#2}}%
          {\mathop{\kern 0.1em\vrule width 0.5em height 0.69678ex depth -0.60387ex
                  \kern -0.6em \intop}\nolimits_{#1}^{#2}}%
          {\mathop{\kern 0.1em\vrule width 0.5em height 0.69678ex depth -0.60387ex
                  \kern -0.6em \intop}\nolimits_{#1}^{#2}}}
\def\eqn#1$$#2$${\begin{equation}\label#1#2\end{equation}}
\def\charfn_#1{{\raise1.2pt\hbox{$\chi
_{\kern-1pt\lower3pt\hbox{{$\scriptstyle#1$}}}$}}}
\def\diam{\operatorname{diam}}
\def\qq1{q_*}
\def\q2{q_{**}}
\newdimen\vintbar
\def\vint{-\kern-\vintbar\int}
\def\E{\mathcal E}
\def\L{\mathcal L}
\def\K{\mathcal K}
\def\P{\mathcal P}
\def\L{\mathcal L}
\def\0{\boldsymbol 0}
\newcommand{\R}{\mathbb R}
\newcommand{\eps}{\epsilon}
\newtoks\by
\newtoks\paper
\newtoks\book
\newtoks\jour
\newtoks\yr
\newtoks\pages
\newtoks\vol
\newtoks\publ
\def\name[#1, #2]{#1 #2}
\def\ota{{\hbox{\bf ???}}}
\def\cLear{\by=\ota\paper=\ota\book=\ota\jour=\ota\yr=\ota
\pages=\ota\vol=\ota\publ=\ota}
\def\endpaper{\the\by, \textit{\the\paper},
{\the\jour} \textbf{\the\vol} (\the\yr), \the\pages.\cLear}
\def\endbook{\the\by, \textit{\the\book},
\the\publ, \the\yr.\cLear}
\def\endpap{\the\by, \textit{\the\paper}, \the\jour.\cLear}
\def\endproc{\the\by, \textit{\the\paper}, \the\book, \the\publ,
\the\yr, \the\pages.\cLear}
\renewcommand{\d}{\, \mathrm{d}} %differential
\begin{document}
\title[$L^p$ Dirichlet problem for elliptic and parabolic operators]{A structure theorem for elliptic and parabolic operators with applications to  homogenization\\ of operators of Kolmogorov type}

%\address{Steve Hofmann\\Department of Mathematics, University of Missouri, Columbia, MO 65211, USA} \email{hofmanns@missouri.edu}

\address{Malte Litsg{\aa}rd \\Department of Mathematics, Uppsala University\\
S-751 06 Uppsala, Sweden}
\email{malte.litsgard@math.uu.se}

\address{Kaj Nystr\"{o}m\\Department of Mathematics, Uppsala University\\
S-751 06 Uppsala, Sweden}
\email{kaj.nystrom@math.uu.se}

\thanks{K. N was partially supported by grant  2017-03805 from the Swedish research council (VR)}

\author{Malte Litsg{\aa}rd and Kaj Nystr{\"o}m}
\maketitle
\begin{abstract}
\noindent \medskip
We consider the operators
                  \begin{eqnarray*}
    \nabla_X\cdot(A(X)\nabla_X),\  \nabla_X\cdot(A(X)\nabla_X)-\partial_t,\ \nabla_X\cdot(A(X)\nabla_X)+X\cdot\nabla_Y-\partial_t,
    \end{eqnarray*}
    where $X\in \Omega$, $(X,t)\in \Omega\times \mathbb R$ and $(X,Y,t)\in \Omega\times \mathbb R^m\times \mathbb R$, respectively, and where  $\Omega\subset\mathbb R^m$ is  a (unbounded) Lipschitz domain with defining function $\psi:\mathbb R^{m-1}\to\mathbb R$ being Lipschitz with constant bounded by $M$.  Assume that
    the elliptic  measure associated to the first of these operators is mutually absolutely continuous with respect to the surface measure
    $\d \sigma(X)$, and that the corresponding Radon-Nikodym derivative or Poisson kernel satisfies
    a scale invariant reverse H{\"o}lder inequality in $L^p$, for some fixed $p$, $1<p<\infty$,  with constants depending only on the constants of $A$, $m$ and the Lipschitz constant of $\psi$, $M$. Under this assumption we prove that then the same conclusions are also true  for the  parabolic measures associated to the second and third operator with $\d \sigma(X)$ replaced by  the surface measures
    $\d \sigma(X)\d t$ and $\d \sigma(X)\d Y\d t$, respectively. This structural theorem allows us to reprove several results previously established in the literature as well as to deduce new results in, for example, the context of homogenization for operators of Kolmogorov type.  Our proof of the structural theorem is based on recent results established by the authors concerning boundary Harnack inequalities for operators of Kolmogorov type in divergence form with bounded, measurable and uniformly elliptic coefficients.
    \\

\noindent
2000  {\em Mathematics Subject Classification.}  35K65, 35K70, 35H20, 35R03.
\noindent

\medskip

\noindent
{\it Keywords and phrases: Kolmogorov equation, elliptic, parabolic, ultraparabolic, hypoelliptic, operators in divergence form,  Dirichlet problem, Lipschitz domain, doubling measure, elliptic measure, parabolic measure, Kolmogorov measure, $A_\infty$, Lie group, homogenization.}
\end{abstract}

%\newpage
%\tableofcontents

%\newpage

    \setcounter{equation}{0} \setcounter{theorem}{0}
%Let $\Omega\subset\mathbb R^n$ be a bounded domain, i.e., a bounded, open and connected set.
    \section{Introduction}

    Let $\Omega\subset\mathbb R^m$, $m\geq 2$, be a (unbounded) Lipschitz domain
         \begin{eqnarray}\label{Lip}
 \Omega=\{ X=(x,x_{m})\in\mathbb R^{m-1}\times\mathbb R\mid  x_m>\psi(x)\},
    \end{eqnarray}
     where $\psi:\mathbb R^{m-1}\to\mathbb R$ is Lipschitz with constant bounded by $M$.  Let $A=A(X)=\{a_{i,j}(X)\}$ be a real $m\times m$ matrix-valued, measurable function such that $A(X)$ is symmetric and
    \begin{equation}\label{eq:Aellip}
  \kappa^{-1} |\xi|^2 \leq \sum_{i,j=1}^{m} a_{i,j}(X)\xi_i \xi_j \leq \kappa |\xi|^2,
\end{equation}
for some $1 \leq \kappa < \infty$ and for all $\xi \in \R^{m}$, $X\in\mathbb R^m$. We consider the divergence form operators
                  \begin{eqnarray*}
    &&\L_{\mathcal{E}}:=\nabla_X\cdot(A(X)\nabla_X),\notag\\
    &&\L_{\mathcal{P}}:=\nabla_X\cdot(A(X)\nabla_X)-\partial_t,\notag\\
    &&\L_{\mathcal{K}}:= \nabla_X\cdot(A(X)\nabla_X)+X\cdot\nabla_Y-\partial_t,
    \end{eqnarray*}
     in $\mathbb R^{2m+1}$, $m\geq 1$, equipped with coordinates $(X,Y,t):=(x_1,...,x_{m},y_1,...,y_{m},t)\in \mathbb R^{m}\times\mathbb R^{m}\times\mathbb R$. Obviously
     $\L_{\mathcal{E}}$ only makes reference to the $X$ coordinate, $\L_{\mathcal{P}}$ makes reference to the $X$ and $t$ coordinates and $\L_{\mathcal{K}}$ makes reference to all coordinates. The subscripts $\mathcal{E}$, $\mathcal{P}$, $\mathcal{K}$, refer to Elliptic, Parabolic and Kolmogorov.

     $\L_{\mathcal{E}}$ is the standard second order elliptic PDE with only measurable, bounded and uniformly elliptic coefficients much studied ever since the breakthroughs of Moser, Nash, DeGiorgi and others. $\L_{\mathcal{P}}$ is the corresponding parabolic version, and $\L_{\mathcal{K}}$ is an operator of Kolmogorov type in divergence form which up to now has only been modestly studied and understood.  Recently, in  \cite{Ietal} the authors extended the  De Giorgi-Nash-Moser (DGNM) theory, which in its original form only considers elliptic
or parabolic equations in divergence form, to (hypoelliptic) equations with rough coefficients including the operator $\L_{\mathcal{K}}$ assuming \eqref{eq:Aellip}. Their result is the correct scale- and translation-invariant estimates for local H{\"o}lder continuity and the Harnack inequality for weak solutions.

     To give some perspective on the operator $\L_{\mathcal{K}}$, recall that  the operator
    $$\K:=\nabla_X\cdot \nabla_X+X\cdot\nabla_Y-\partial_t$$
      was originally introduced and studied by
Kolmogorov, see \cite{K}. Kolmogorov noted that $\K$ is an example of a degenerate parabolic
operator having strong regularity properties and he proved that $\K$ has a fundamental solution which is smooth off its diagonal. Today, using the terminology introduced by H{\"o}rmander, see \cite{H}, we can conclude that $\K$ is hypoelliptic. Naturally, for the operator $\L_{\mathcal{K}}$, assuming only measurable coefficients and \eqref{eq:Aellip}, the methods of Kolmogorov and H{\"o}rmander can not be directly applied to establish the DGNM theory and related estimates.

In this paper we are interested in the $L^p$ Dirichlet problem for the operators $\L_{\mathcal{E}}$, $\L_{\mathcal{P}}$, $\L_{\mathcal{K}}$ in the (unbounded) Lipschitz domains
$\Omega$, $\Omega\times \mathbb R$ and $\Omega\times \mathbb R^m\times \mathbb R$ respectively, and where
$X\in \Omega$, $(X,t)\in \Omega\times \mathbb R$ and $(X,Y,t)\in \Omega\times \mathbb R^m\times \mathbb R$.  In particular, we consider the operators
$\L_{\mathcal{P}}$ and  $\L_{\mathcal{K}}$ in $t$-independent and $(Y,t)$-independent domains, respectively. We introduce a (physical) measure
  $\sigma_{\K}$ on $\partial\Omega\times \mathbb R^m\times \mathbb R$,
 \begin{eqnarray}\label{surfac+}
 \quad\quad \d\sigma_{\K}(X,Y,t):=\sqrt{1+|\nabla_{x}\psi(x)|^2}\d x\d Y\d t,\ (X,Y,t)\in\partial\Omega\times \mathbb R^m\times \mathbb R.
 \end{eqnarray}
 We refer to  $\sigma_{\K}$ as the surface measure on $\partial\Omega\times \mathbb R^m\times \mathbb R$ where the subscript $\K$ indicates that we consider a setting appropriate for operators of Kolmogorov type. The corresponding measures relevant for $\L_{\mathcal{E}}$ and $\L_{\mathcal{P}}$ are $\sigma_\E$ and $\sigma_\P$,
 \begin{eqnarray}\label{surfac++}
  \d\sigma_{\E}(X):=\sqrt{1+|\nabla_{x}\psi(x)|^2}\d x,\  \d\sigma_{\P}(X,t):=\d \sigma_{\mathcal{E}}(X)\d t,
 \end{eqnarray}
 where $X\in\partial\Omega$ and $(X,t)\in\partial\Omega\times\mathbb R$, respectively.

The main results of the paper are
Theorem \ref{Ainfty}, Theorem \ref{Ainfty+} and Theorem \ref{DPequiv} stated in Section \ref{secstruc} below.  Using these theorems we can derive new results concerning the $L^p$ Dirichlet problem for $\L_\K$ using results previously only proved for $\L_\E$ or $\L_\P$, and we can also conclude that some results proved in the literature concerning $\L_\P$ are straightforward consequences of the corresponding results for $\L_\E$. In particular, the main result of Fabes and Salsa \cite{FSa} concerning parabolic measure is a consequence of the classical result of Dahlberg \cite{D} concerning harmonic measure. Our proofs of Theorem \ref{Ainfty}, Theorem \ref{Ainfty+} and Theorem \ref{DPequiv} are based on our recent results in \cite{LN} concerning boundary Harnack inequalities for operators of Kolmogorov type in divergence form with bounded, measurable and uniformly elliptic coefficients.

 Theorem \ref{Ainfty}, Theorem \ref{Ainfty+} and Theorem \ref{DPequiv}, and their consequences, are deduced under the assumptions
 \begin{align}\label{keyassump}
 (i)&\mbox{ $\Omega\subset\mathbb R^{m}$ is a (unbounded)  Lipschitz domain  with constant $M$},\notag\\
 (ii)&\mbox{ $A$ satisfies \eqref{eq:Aellip} with constant $\kappa$},\notag\\
 (iii)&\mbox{ $A$ satisfies the qualitative assumptions stated in \eqref{compre} and \eqref{eq2+re} below}.
 \end{align}
 All quantitative estimates will only depend on $m$, $\kappa$ and $M$ and Theorem \ref{Ainfty} and Theorem \ref{Ainfty+} are by their nature of local character. However, we have chosen to state our results in the unbounded geometric setting $ \Omega\times \mathbb R^m\times \mathbb R$. To avoid being diverted by additional technical issues caused by the unbounded setting we assume \eqref{compre}. \eqref{eq2+re} is only imposed to ensure that all results (e.g. the existence of fundamental solutions) and all estimates used in the paper can be found in the literature. {One can dispense of \eqref{eq2+re}  at the expense of additional arguments.}

We consider the following problems and  we refer to the bulk of the paper for all definitions and in particular for the definition of weak solutions to
$\L_\K u =0$ in $\Omega\times \mathbb R^m\times \mathbb R$.

\begin{definition}\label{solvability} Assume that $\Omega\subset\mathbb R^{m}$ is a (unbounded)  Lipschitz domain  with constant $M$. Assume that $A$ satisfies \eqref{eq:Aellip} with constant $\kappa$, and \eqref{compre}. Given $p\in (1,\infty)$ we say that the Dirichlet problem for $\L_\K u =0$ in $\Omega\times \mathbb R^m\times \mathbb R $ is solvable in $L^p(\partial \Omega\times \mathbb R^m\times \mathbb R,\d\sigma_\K)$ if there exists, for every $f \in L^p(\partial \Omega\times \mathbb R^m\times \mathbb R,\d\sigma_\K)$, a weak solution to the Dirichlet problem
\begin{equation*}\label{dpweak}
\begin{cases}
	\L_{\K} u = 0  &\text{in} \ \Omega\times \mathbb R^m\times \mathbb R, \\
      u = f  & \text{n.t. on} \ \partial \Omega\times \mathbb R^m\times \mathbb R,
\end{cases}
\end{equation*}
and a constant $c$, depending only on $m$, $\kappa$, $M$ and $p$, such that
\[
\|N(u)\|_{L^p(\partial \Omega\times \mathbb R^m\times \mathbb R,\d\sigma_\K)}
	 \le c \|f\|_{L^p(\partial \Omega\times \mathbb R^m\times \mathbb R,\d\sigma_\K)},
\]
where $N(u)$ is introduced in subsection \ref{ntmaxop}. For short we say that $D_{\K}^p(\partial \Omega\times \mathbb R^m\times \mathbb R,\d\sigma_\K)$ is solvable. If the solution is unique then we say that the  Dirichlet problem for $\L_\K u =0$ in $\Omega $ is uniquely solvable in $L^p(\partial \Omega\times \mathbb R^m\times \mathbb R,\d\sigma_\K)$. For short we write that  $D_{\K}^p(\partial \Omega\times \mathbb R^m\times \mathbb R,\d\sigma_\K)$ is uniquely solvable. The notion  that $D_{\E}^p(\partial \Omega,\d\sigma_\E)$ and $D_{\P}^p(\partial \Omega\times  \mathbb R,\d\sigma_\P)$ are uniquely solvable are defined analogously.
\end{definition}

Using our structural theorems (i.e. combining Theorem \ref{Ainfty}, Theorem \ref{Ainfty+} and Theorem \ref{DPequiv}) we can conclude that if  $D_{\E}^p(\partial \Omega,\d\sigma_\E)$  is uniquely solvable for some $p\in (1,\infty)$, then also
$D_{\K}^p(\partial \Omega\times \mathbb R^m\times \mathbb R,\d\sigma_\K)$  is uniquely solvable. We can use this insight to state a number of results
concerning the solvability of $D_{\K}^p(\partial \Omega\times \mathbb R^m\times \mathbb R,\d\sigma_\K)$ and in particular we can conclude the following.

\begin{theorem}\label{per1-} Assume \eqref{keyassump}. Assume also
\begin{equation}\label{eq:Aperiod-}
  A(x,x_{m})=A(x), \quad x \in \R^{m-1}, \ x_{m} \in \R,
\end{equation}
i.e. $A$ is independent of $x_m$. Then there exists $\delta=\delta(m,\kappa,M)\in (0,1)$ such that if $2-\delta<p<\infty$, then
$D_{\K}^p(\partial \Omega\times \mathbb R^m\times \mathbb R,\d\sigma_\K)$ is uniquely solvable.
\end{theorem}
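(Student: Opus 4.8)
The plan is to reduce Theorem \ref{per1-} to the known solvability of the elliptic Dirichlet problem $D_{\E}^p(\partial\Omega,\d\sigma_\E)$ and then invoke the structural theorems. The essential input is a classical perturbation result: for the elliptic operator $\L_{\mathcal E}=\nabla_X\cdot(A(X)\nabla_X)$ with $A$ symmetric, bounded, measurable and uniformly elliptic \emph{and independent of the $x_m$-variable}, one has that the elliptic measure is mutually absolutely continuous with respect to surface measure $\d\sigma_\E$ and the Poisson kernel satisfies a scale-invariant reverse H\"older inequality in $L^{p'}$ for some $p'=p'(m,\kappa,M)>1$; equivalently, there is $\delta=\delta(m,\kappa,M)\in(0,1)$ so that $D_{\E}^p(\partial\Omega,\d\sigma_\E)$ is uniquely solvable whenever $2-\delta<p<\infty$. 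This is precisely the setting treated by Jerison--Kenig and, for $t$-independent (here $x_m$-independent) coefficients, by the work on the $L^p$ Dirichlet problem for such ``block-type'' or transversally independent operators; in the Lipschitz-domain formulation \eqref{Lip} with $x_m$ playing the role of the transversal direction, the $x_m$-independence of $A$ is exactly the structural hypothesis under which the Rellich-type identities and the resulting $A_\infty$ property of elliptic measure are available with constants depending only on $m$, $\kappa$ and $M$.

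The key steps, in order, are as follows. First I would record the elliptic input: under \eqref{keyassump} together with \eqref{eq:Aperiod-}, cite the relevant literature to conclude the existence of $\delta=\delta(m,\kappa,M)\in(0,1)$ such that $D_{\E}^p(\partial\Omega,\d\sigma_\E)$ is uniquely solvable for all $p\in(2-\delta,\infty)$; equivalently, the elliptic measure for $\L_{\mathcal E}$ is mutually absolutely continuous with $\d\sigma_\E$ and the Poisson kernel satisfies a scale-invariant reverse H\"older inequality in $L^{p'}$, $p'$ the dual exponent, with constants depending only on $m$, $\kappa$, $M$. Second, I would observe that this is exactly the hypothesis needed to apply the structural machinery of the paper: Theorem \ref{Ainfty} promotes the reverse H\"older / $A_\infty$ property of elliptic measure for $\L_{\mathcal E}$ to the analogous property for parabolic measure associated to $\L_{\mathcal P}$ with $\d\sigma_\P=\d\sigma_\E\,\d t$ (this uses that $\L_{\mathcal P}$ is considered in the $t$-independent domain $\Omega\times\mathbb R$ with $t$-independent coefficients, so the added $t$-variable is a pure product direction). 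Third, Theorem \ref{Ainfty+} performs the same promotion from $\L_{\mathcal P}$ to the Kolmogorov operator $\L_{\mathcal K}$ in the $(Y,t)$-independent domain $\Omega\times\mathbb R^m\times\mathbb R$ with surface measure $\d\sigma_\K=\d\sigma_\E\,\d Y\,\d t$, the drift term $X\cdot\nabla_Y$ being handled via the boundary Harnack inequalities for Kolmogorov operators from \cite{LN}. Fourth, Theorem \ref{DPequiv} translates the resulting reverse H\"older estimate for the Kolmogorov measure into the unique solvability of $D_{\K}^p(\partial\Omega\times\mathbb R^m\times\mathbb R,\d\sigma_\K)$ for the range of $p$ dual to the reverse H\"older exponent, which is $2-\delta<p<\infty$ after possibly shrinking $\delta$. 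Chaining these gives the theorem with $\delta$ depending only on $m$, $\kappa$, $M$.

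The main obstacle is not in the chaining — that is the content of the already-established Theorems \ref{Ainfty}, \ref{Ainfty+}, \ref{DPequiv} — but in correctly identifying and citing the elliptic result for $x_m$-independent coefficients and in checking that its quantitative constants depend only on $m$, $\kappa$, $M$. One must be careful that the Lipschitz domain \eqref{Lip} is written with the transversal coordinate $x_m$, and that ``$A$ independent of $x_m$'' is the right hypothesis: it is this translation invariance in the $x_m$-direction (not in the ``horizontal'' $x$-directions) that drives the Rellich identity estimates and yields the scale-invariant reverse H\"older inequality for the Poisson kernel; one also needs to confirm that the qualitative assumptions \eqref{compre} and \eqref{eq2+re} in \eqref{keyassump}$(iii)$ are compatible with, and do not interfere with, the $x_m$-independence assumption \eqref{eq:Aperiod-}. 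A secondary technical point is bookkeeping of exponents: each of the three structural theorems may in principle change the reverse H\"older exponent, so one should verify that the final exponent range is still of the form $(2-\delta,\infty)$ with a (possibly smaller) $\delta=\delta(m,\kappa,M)$; since each step preserves membership in the relevant $A_\infty$ class with controlled constants and $A_\infty=\bigcup_{q>1}RH_q$, this is a routine but necessary check. Once these points are in place, the proof is a short deduction.
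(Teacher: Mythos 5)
Your proposal is correct and matches the paper's own proof: the paper states explicitly that Theorem \ref{per1-} follows from the elliptic result in \cite{JK} via the structural theorems, which is exactly your chain of deductions. One small clarification: Theorem \ref{Ainfty+} does not promote from $\L_\P$ to $\L_\K$ as a separate step after Theorem \ref{Ainfty}; rather, it takes the elliptic $B_q$ hypothesis and concludes both $K_\P\in B_q$ and $K_\K\in B_q$ simultaneously, with the \emph{same} exponent $q$, and Theorem \ref{Ainfty} (the measure-comparison result) enters as an ingredient in its proof rather than as a separate promotion step — so your exponent-bookkeeping concern is already resolved by the statements of the theorems and the range $(2-\delta,\infty)$ is inherited unchanged from the elliptic input.
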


\begin{theorem}\label{per1} Assume \eqref{keyassump}. Assume also
\begin{equation}\label{eq:Aperiod}
  A(x,x_{m} + 1)=A(x,x_{m}), \quad x \in \R^{m-1}, \ x_{m} \in \R,
\end{equation}
i.e. $A$ is 1-periodic in $x_m$, and that $A$ satisfies a Dini-type condition in the $x_{m}$ variable,
\begin{equation}\label{eq:Dini}
  \int_0^1 \frac{\theta(\rho)^2}{\rho} \d\rho < \infty,
\end{equation}
where $\theta(\rho):= \sup \{|A(x,\lambda_1) - A(x,\lambda_2)| \mid  x\in \R^{m-1}, \ |\lambda_1 - \lambda_2| \leq \rho \}$.  Then there exists $\delta=\delta(m,\kappa,M)\in (0,1)$ such that if $2-\delta<p<\infty$, then
$D_{\K}^p(\partial \Omega\times \mathbb R^m\times \mathbb R,\d\sigma_\K)$ is uniquely solvable.
\end{theorem}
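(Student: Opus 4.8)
The strategy is to reduce the statement to the already-established equivalences (Theorem \ref{Ainfty}, Theorem \ref{Ainfty+}, Theorem \ref{DPequiv}), which collectively assert that unique solvability of $D_{\K}^p(\partial\Omega\times\mathbb R^m\times\mathbb R,\d\sigma_\K)$ is implied by unique solvability of $D_{\E}^p(\partial\Omega,\d\sigma_\E)$ for the associated elliptic operator $\L_{\mathcal E}=\nabla_X\cdot(A(X)\nabla_X)$. Thus the entire content of the theorem lies in verifying that, under hypotheses \eqref{eq:Aperiod} and \eqref{eq:Dini}, the elliptic Dirichlet problem is solvable in $L^p$ for all $p>2-\delta$ with $\delta$ depending only on $m,\kappa,M$. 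Concretely: first I would invoke the structural theorems to say "it suffices to prove $D_{\E}^p(\partial\Omega,\d\sigma_\E)$ is uniquely solvable for $2-\delta<p<\infty$''; then I would appeal to the corresponding result in the elliptic literature for coefficient matrices that are periodic in the transversal direction $x_m$ and satisfy a square-Dini modulus of continuity in that variable.

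For the elliptic step, the natural reference framework is the body of work on the $L^p$ Dirichlet problem for operators whose coefficients depend on all but one variable in a controlled way, combined with homogenization in the periodic direction. Here is the mechanism I expect to use. Because $A(x,x_m)$ is $1$-periodic in $x_m$ and satisfies \eqref{eq:Dini}, one can compare $\L_{\mathcal E}$ with the operator whose coefficients are the $x_m$-average $\overline A(x):=\int_0^1 A(x,s)\,\d s$, which is $x_m$-independent. For $x_m$-independent coefficients, $t$-independent (here $x_m$-independent) techniques give solvability of the elliptic Dirichlet problem in $L^2$, hence in $L^p$ for $p$ in an open interval $(2-\delta,\infty)$ by the self-improvement / extrapolation properties of the reverse Hölder class $A_\infty$ satisfied by the elliptic kernel. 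The Dini condition \eqref{eq:Dini} (with the square on $\theta$) is precisely the sharp integrability that makes the perturbation from $A$ to $\overline A$ a "Dini-type'' or "Carleson-measure'' perturbation preserving the $A_\infty$ property of elliptic measure, by the perturbation theory for elliptic measure; the exponent $\delta$ and all constants remain controlled by $m,\kappa,M$ because the averaging operation does not worsen the ellipticity constant and the Carleson norm of the perturbation is bounded in terms of $\int_0^1\theta(\rho)^2\rho^{-1}\d\rho$.

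The main obstacle is making the perturbation argument fully rigorous in the unbounded Lipschitz setting and verifying that the relevant elliptic results are genuinely available "off the shelf'' with constants of the stated dependence; one must check that the square-Dini hypothesis yields a Carleson measure perturbation (and not merely a vanishing-trace perturbation) so that the $L^p$ solvability survives for a whole range $p>2-\delta$ rather than for a single exponent, and that the periodicity is actually needed only to control the oscillation uniformly at all scales. A secondary technical point is that the qualitative assumptions in \eqref{keyassump}$(iii)$ must be seen to be compatible with, or reducible to, the perturbed coefficients $\overline A$ and the intermediate coefficients used in the comparison, so that the structural theorems apply verbatim. Once the elliptic input is in place, the passage to $\L_\K$ is immediate by Theorems \ref{Ainfty}, \ref{Ainfty+}, \ref{DPequiv}, with the same $\delta$.
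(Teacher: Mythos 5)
Your overall strategy is exactly the paper's: combine Theorems \ref{Ainfty}, \ref{Ainfty+}, \ref{DPequiv} to reduce $D_{\K}^p(\partial\Omega\times\mathbb R^m\times\mathbb R,\d\sigma_\K)$ to $D_{\E}^p(\partial\Omega,\d\sigma_\E)$ for $\L_\E=\nabla_X\cdot(A(X)\nabla_X)$, and then invoke the elliptic result. Indeed, the paper's proof of Theorem \ref{per1} consists precisely of that reduction followed by a citation to \cite{KS}, where the $L^2$ (hence $L^p$, $2-\delta<p<\infty$, by reverse-H\"older self-improvement) solvability is proved for real symmetric matrices that are $1$-periodic and square-Dini in the transversal variable $x_m$.

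The issue is with your proposed \emph{mechanism} for the elliptic step. You suggest comparing $A(x,x_m)$ with its $x_m$-average $\overline A(x)=\int_0^1 A(x,s)\,\d s$ and treating the difference as a Carleson-measure perturbation in the sense of Fefferman--Kenig--Pipher. This cannot work as stated: $|A(x,x_m)-\overline A(x)|$ does not tend to $0$ as $x_m\downarrow\psi(x)$ (it is of size $\theta(1)$ at typical heights, since $\overline A$ is an average over a full period), and for any Carleson box $T(\Delta)$ one has $\iint_{T(\Delta)}\delta(X)^{-1}\,\d X=\infty$, so $\sigma(\Delta)^{-1}\iint_{T(\Delta)}|A-\overline A|^2\delta^{-1}\,\d X$ is not finite. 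The square-Dini condition \eqref{eq:Dini} controls \emph{increments} of $A$ in $x_m$, not the distance from $A$ to its vertical average; it is the right hypothesis not for Carleson perturbation but for the Rellich-identity argument, where the commutator terms involve $\partial_{x_m}A$ and are controlled by $\theta$ in an integrated sense (periodicity supplies the uniformity over all scales in the unbounded domain). Section \ref{fabes} of the paper presents exactly this Rellich-type mechanism in the simpler case of Theorem \ref{per1-}. So if you intended to reprove the elliptic input rather than cite it, the perturbation route you sketched breaks at the very first estimate; if you only intended to cite it, your proof matches the paper's once the reference to \cite{KS} is made explicit.
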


Using our structural theorems it follows that Theorem \ref{per1-} is a consequence of \cite{JK} and that Theorem \ref{per1} is a consequence of \cite{KS}. By the same argument we can conclude that the main result in \cite{FSa} is a consequence of \cite{D} and that the main result in \cite{CS} is a consequence of \cite{KS}.

With Theorem \ref{per1} in place we are also able to analyze a homogenization problem for operators of Kolmogorov type. In this case we assume, in addition to \eqref{eq:Aellip},  that
\begin{equation}\label{perA}
A(X+Z)=A(X),\quad \text{for all }Z\in \mathbb Z^{m},
\end{equation}
and that
\begin{equation}\label{DiniA}
  \int_0^1 \frac{\Theta(\rho)^2}{\rho} \d\rho < \infty,
\end{equation}
where $\Theta(\rho):= \sup \{|A(X) - A(\tilde X)| \mid X,\tilde X\in \R^{m}, \ |X-\tilde X| \leq \rho \}$. That is, $A$ is periodic with respect to the lattice $\mathbb Z^{m}$ and $A$ satisfies a Dini condition in all variables.

We consider, for $\eps>0$, the operator $\L_\E^\epsilon$,
\begin{equation}
    \L_\E^\epsilon :=\nabla_X\cdot(A^\epsilon(X)\nabla_X),\  A^\eps(X) := A({X}/{\eps}).
\end{equation}
Let
\[
\bar \L_\E := \nabla_X\cdot(\bar A\nabla_X),
\]
where the matrix $\bar A$ is determined by
\begin{equation}\label{barA}
\bar{A}\alpha := \int_{(0,1)^m}A(X)\nabla_X w_\alpha(X) \d X,
\quad \alpha\in \R^{m},
\end{equation}
and the auxiliary function $w_\alpha$ solves the problem
\begin{equation*}
\left\{\begin{array}{l}
\nabla_X\cdot\left(A(X)\nabla_X w_\alpha(X)\right)=0 \text{ in }(0,1)^{m},
\vspace{0.25cm} \\
w_\alpha(X)-\alpha X\text{ is }1-\text{periodic (in all variables)}, \vspace{0.25cm} \\
\displaystyle \int_{(0,1)^{m}}(w_\alpha(X)-\alpha X)\d X=0.
\end{array}\right.
\end{equation*}
Finally, we also introduce
\begin{equation}\label{Lepsilon}
    \L_\K^\epsilon :=\L_\E^\epsilon+{X}\cdot\nabla_Y-\partial_t,\ \bar\L_\K :=\bar\L_\E+{X}\cdot\nabla_Y-\partial_t.
\end{equation}

We prove the following homogenization result.
\begin{theorem}\label{th:homogenization}
Assume \eqref{keyassump}.  Assume also \eqref{perA} and \eqref{DiniA}.  Then there exists $\delta=\delta(m,\kappa,M)\in (0,1)$ such that the following is true. Consider $\epsilon>0$.
Given $p$, $2-\delta<p<\infty$, $f\in L^p(\partial \Omega\times \mathbb R^m\times \mathbb R,\d\sigma_\K)$, there exists a unique weak solution $u_\epsilon$ to the Dirichlet problem
\begin{equation*}
\begin{cases}
	\L_{\K}^\epsilon u_\epsilon = 0  &\text{in} \ \Omega\times \mathbb R^m\times \mathbb R, \\
      u_\epsilon = f  & \text{n.t. on} \ \partial \Omega\times \mathbb R^m\times \mathbb R,
\end{cases}
\end{equation*}
and  a constant $c=c(m,\kappa,M,p)$, $1\leq c<\infty$, such that
\[
\|N(u_\epsilon)\|_{L^p(\partial \Omega\times \mathbb R^m\times \mathbb R,\d\sigma_\K)}
	 \le c \|f\|_{L^p(\partial \Omega\times \mathbb R^m\times \mathbb R,\d\sigma_\K)}.
\]
Moreover, $u_\eps \to \bar u$ locally uniformly in $\Omega\times \mathbb R^m\times \mathbb R$ as $\eps\to 0$, and $\bar u$ is the unique weak solution to the Dirichlet problem
\begin{equation}\label{homogeneous}
\begin{cases}
	\bar\L_{\K}\bar u = 0  &\text{in} \ \Omega\times \mathbb R^m\times \mathbb R, \\
      \bar u = f  & \text{n.t. on} \ \partial \Omega\times \mathbb R^m\times \mathbb R,
\end{cases}
\end{equation}
and there exists a constant $c=c(m,\kappa,M,p)$, $1\leq c<\infty$, such that
\[
\|N(\bar u)\|_{L^p(\partial \Omega\times \mathbb R^m\times \mathbb R,\d\sigma_\K)}
	 \le c \|f\|_{L^p(\partial \Omega\times \mathbb R^m\times \mathbb R,\d\sigma_\K)}.
\]
\end{theorem}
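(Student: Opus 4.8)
The plan is to split the theorem into two independent parts: (a) the $\eps$-uniform existence, uniqueness and $N(\cdot)$-bounds for $u_\eps$, together with the corresponding statements for $\bar u$; and (b) the convergence $u_\eps\to\bar u$.

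\textbf{Uniform solvability.} Fix $\eps>0$. The coefficient $A^\eps(X)=A(X/\eps)$ satisfies \eqref{eq:Aellip} with the same $\kappa$, inherits the qualitative assumptions in \eqref{keyassump} from $A$, is $\eps$-periodic in $x_m$, and satisfies a Dini condition in $x_m$: its modulus $\theta^\eps$ (as in \eqref{eq:Dini}) obeys $\theta^\eps(\rho)\le\Theta(\rho/\eps)$, so $\int_0^1\theta^\eps(\rho)^2\rho^{-1}\d\rho\le\int_0^{1/\eps}\Theta(s)^2 s^{-1}\d s<\infty$ since $\Theta$ is bounded and $\Theta$ satisfies \eqref{DiniA}. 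Hence Theorem \ref{per1} applies to $\L_\K^\eps$ --- either directly, the $\delta$ it produces depending only on $m,\kappa,M$ and not on the period or Dini modulus of the coefficient, or after the parabolic dilation $(X,Y,t)\mapsto(\eps X,\eps^3 Y,\eps^2 t)$, which replaces $A^\eps$ by the $\mathbb Z^m$-periodic matrix $A$ and $\Omega$ by a Lipschitz domain with the same constant $M$. This yields, for $2-\delta<p<\infty$, the unique solvability of $D_\K^p(\partial\Omega\times\mathbb R^m\times\mathbb R,\d\sigma_\K)$ for $\L_\K^\eps$ with $c=c(m,\kappa,M,p)$ independent of $\eps$, which is the part of the statement concerning $u_\eps$. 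For $\bar u$: the matrix $\bar A$ of \eqref{barA} is symmetric and elliptic with constant $\bar\kappa=\bar\kappa(\kappa)$ (classical), so $\bar\L_\K$ is a constant-coefficient operator of Kolmogorov type and Theorem \ref{per1-}, used with $\bar A$ in place of $A$, gives the unique solvability of $D_\K^p(\partial\Omega\times\mathbb R^m\times\mathbb R,\d\sigma_\K)$ for $\bar\L_\K$ on the same range of $p$ with a constant depending only on $m,\kappa,M,p$, hence $\bar u$ and its bound in \eqref{homogeneous}. Taking $\delta=\delta(m,\kappa,M)$ small enough, one $\delta$ works for both.

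\textbf{Reduction to continuous data and compactness.} By the uniform solvability above and Theorem \ref{DPequiv}, the Kolmogorov measure $\omega^\eps_Z$ of $\L_\K^\eps$ has a Poisson kernel satisfying a scale-invariant reverse H{\"o}lder inequality in $L^{p'}$ with constants uniform in $\eps$; via the representation $u_\eps(Z)=\int f\,\d\omega^\eps_Z$ this bounds $|u_\eps(Z)|$ by $C_Z\|f\|_{L^p(\d\sigma_\K)}$ with $C_Z$ uniform in $\eps$ for $Z$ in a fixed compact subset of $\Omega\times\mathbb R^m\times\mathbb R$, and likewise for $\bar u$ and for differences of data. Since bounded continuous compactly supported functions are dense in $L^p(\d\sigma_\K)$, it suffices to prove $u_\eps\to\bar u$ locally uniformly for such $f$. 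For bounded continuous $f$ the maximum principle gives $\|u_\eps\|_{L^\infty}\le\|f\|_{L^\infty}$; the interior De Giorgi--Nash--Moser estimates for $\L_\K^\eps$ from \cite{Ietal}, with constants uniform in $\eps$ because the ellipticity is, give equicontinuity of $\{u_\eps\}$ on compact subsets; and the Caccioppoli inequality for operators of Kolmogorov type (see \cite{Ietal,LN}) bounds $\nabla_X u_\eps$ in $L^2_{\mathrm{loc}}$ uniformly in $\eps$. By Arzel{\`a}--Ascoli and a diagonal argument over an exhaustion of $\Omega\times\mathbb R^m\times\mathbb R$, every sequence $\eps_k\to0$ has a subsequence along which $u_{\eps_k}\to v$ locally uniformly, with $\nabla_X u_{\eps_k}\rightharpoonup\nabla_X v$ weakly in $L^2_{\mathrm{loc}}$.

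\textbf{Identification of the limit and conclusion.} First, $v$ is a weak solution of $\bar\L_\K v=0$ in $\Omega\times\mathbb R^m\times\mathbb R$. This is periodic homogenization of the $X$-elliptic part, with $(Y,t)$ as non-oscillating parameters: into the weak formulation of $\L_\K^\eps u_\eps=0$ one inserts the oscillating test function $\varphi_\eps=\varphi+\eps\sum_{j=1}^{m}\chi_j(X/\eps)\,\partial_{x_j}\varphi$, where $\chi_j(Z)=w_{e_j}(Z)-e_j\cdot Z$ is the periodic corrector from \eqref{barA}; using the uniform $L^2_{\mathrm{loc}}$ bound on $\nabla_X u_\eps$, the periodicity \eqref{perA}, the cell equation for $w_{e_j}$ and the div--curl lemma (Tartar's method of oscillating test functions), the term $\int A^\eps\nabla_X u_\eps\cdot\nabla_X\varphi_\eps$ converges to $\int\bar A\nabla_X v\cdot\nabla_X\varphi$, while the contributions of $X\cdot\nabla_Y-\partial_t$ pass to the limit directly since they carry no $\eps$-oscillation and $u_\eps\to v$, $\varphi_\eps\to\varphi$ with the relevant first derivatives converging locally in $L^2$. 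Here \eqref{DiniA} is used precisely to make $\nabla\chi_j$ continuous (Avellaneda--Lin), so $\varphi_\eps$ is admissible and boundary layers stay controlled. Second, $v$ attains the data $f$ nontangentially: the boundary Harnack and barrier estimates for operators of Kolmogorov type from \cite{LN}, whose constants depend only on $m,\kappa,M$, imply that for continuous $f$ one has $u_\eps\to f$ nontangentially near $\partial\Omega\times\mathbb R^m\times\mathbb R$ uniformly in $\eps$, so $v=f$ n.t. By the uniqueness for $\bar\L_\K$ established above, $v=\bar u$; since the limit is independent of the chosen subsequence, the full family $u_\eps$ converges to $\bar u$ locally uniformly. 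Finally $\|N(\bar u)\|_{L^p(\d\sigma_\K)}\le c\|f\|_{L^p(\d\sigma_\K)}$ is the bound from the first step (alternatively it follows from the $\eps$-uniform bounds on $N(u_\eps)$ by Fatou's lemma). The main obstacle is this last step --- identifying the local-uniform limit with the homogenized Kolmogorov solution and verifying uniform-in-$\eps$ attainment of the boundary data: the former is classical periodic homogenization once the $X$-oscillation is decoupled from the $\eps$-free transport terms and \eqref{DiniA} supplies corrector regularity, and the latter rests on the $\eps$-independent boundary estimates of \cite{LN}.
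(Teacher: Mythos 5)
Your proof is essentially correct and follows the paper's overall strategy: the parabolic rescaling $(X,Y,t)\mapsto(\eps\tilde X,\eps^3\tilde Y,\eps^2\tilde t)$ turns $\L_\K^\eps$ on $\Omega$ into $\L_\K^1$ on a domain $\Omega_\eps$ with the same Lipschitz constant, Theorem \ref{per1} then yields solvability with constants uniform in $\eps$, Theorem \ref{per1-} handles $\bar\L_\K$, and the interior identification of the limit is Tartar's method of oscillating test functions, exactly as in the paper (which follows \cite{CD}). The place where you take a genuinely different route is the identification of the boundary trace. You argue that the boundary H\"older decay of \cite{LN} (Lemma \ref{lem4.5-Kyoto1} together with the Kolmogorov-measure estimates), having constants that depend only on $m,\kappa,M$ and hence are uniform in $\eps$, forces $u_\eps\to f$ nontangentially uniformly in $\eps$ for continuous data, so the locally uniform limit $v$ has trace $f$, and uniqueness for $\bar\L_\K$ finishes. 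The paper instead extracts, by an $L^2$-duality argument, a weak $L^2_{\mathrm{loc}}$ limit $\bar K$ of the Poisson kernels $K_\eps$, writes $\bar u(Z)=\iiint \bar K(Z,\cdot)f\,\d\sigma_\K$, and identifies $\bar K$ with the Radon--Nikodym derivative of the Kolmogorov measure of $\bar\L_\K$ via Theorem \ref{per1-} and Theorem \ref{DPequiv}. Your route is somewhat more direct; the paper's has the side benefit of producing the kernel convergence $K_\eps\rightharpoonup\bar K$ explicitly.

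Two smaller points. You claim the Dini condition \eqref{DiniA} is needed to make $\nabla\chi_j$ continuous (Avellaneda--Lin) so that the oscillating test function $\varphi_\eps$ is admissible. That is a misattribution: both your test function and the paper's only need $\nabla w_\alpha\in L^2_{\mathrm{per}}$, which the cell problem already supplies, and the oscillation cancellation is a weak--strong pairing, not a pointwise estimate; \eqref{DiniA} enters only through Theorems \ref{per1} and \ref{per1-} as a hypothesis inherited from \cite{KS}. Also, applying Theorem \ref{per1} to $A^\eps$ ``directly'' does not give $\eps$-uniform constants (the period is $\eps$, not $1$, in $x_m$); only the rescaling alternative you also state, which is what the paper uses, gives the uniformity.
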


 Theorem \ref{per1}, and the first part of Theorem \ref{th:homogenization}, was proved  in \cite{KS} for $\L_\E$. In \cite{KS} also the Neumann and regularity problems are treated. The theory for the Neumann and regularity problems is based on the use of integral identities to estimate certain non-tangential maximal functions. Homogenization of Neumann and regularity problems for $\L_\P$ and $\L_\K$ remain interesting  open problems.

% as outlined in the beginning of Section \ref{sec1.5} below

 To be clear, the main idea of this paper is that results concerning the $L^p$ Dirichlet problem for the operator $\L_\K$  in domains  $\Omega\times \mathbb R^m\times \mathbb R$ (and
for the operator $\L_\P$  in domains  $\Omega\times \mathbb R$) can be derived from the corresponding results for the operator $\L_\E$  in $\Omega$, using boundary estimates and in particular boundary Harnack inequalities for the operator $\L_\K$ $(\L_\P)$. In the case of $\L_\K$ the latter results are established in \cite{LN}. However, the relevant  results in \cite{LN} hold for   more general operators \begin{eqnarray*}
   \nabla_X\cdot(A(X,Y,t)\nabla_X)+X\cdot\nabla_Y-\partial_t,
    \end{eqnarray*}
    and in the more general class of domains
\begin{eqnarray*}
 \{(X,Y,t)=(x,x_{m},y,y_{m},t)\in\mathbb R^{2m+1} \mid \ x_m>\tilde\psi(x,y,t)\}.
    \end{eqnarray*}
    In particular, in \cite{LN} we allow for $(Y,t)$-dependent coefficients and domains. Therefore, one can repeat the analysis of this paper, taking any result concerning the solvability of the $L^p$ Dirichlet problem for parabolic operators
    \begin{eqnarray*}
   \nabla_X\cdot(A(X,t)\nabla_X)-\partial_t,
    \end{eqnarray*}
    in Lip(1,1/2) domains, as the point of departure. The results are the corresponding results for the operator $$\nabla_X\cdot(A(X,t)\nabla_X)+X\cdot\nabla_Y-\partial_t$$ in $Y$-independent Lipschitz type domains. Similarly, focusing only on $\L_\E$ and $\L_\P$ one can replace $\Omega\subset \mathbb R^m$ by a NTA-domain in the sense of \cite{JK1}, having a $(m-1)$-dimensional Ahlfors-regular boundary in the sense of \cite{DS}, \cite{DS1} (see also \cite{DJ}).

 The rest of the paper is organized as follows. In Section \ref{secprim}, which is of more preliminary nature, we introduce notation and state definitions including the notion of weak solutions. In this section we also discuss the Dirichlet problem, see Theorem \ref{EUDP}, and we point out that in Theorem 1.3 and Theorem 1.4 in \cite{LN1} we simply missed stating the obvious restriction $u\in L^\infty(\Omega\times \mathbb R^m\times \mathbb R)$ under which the proofs in \cite{LN1} are given. With this clarification,
Theorem \ref{EUDP} is a special case of Theorem 1.4 in \cite{LN1}. In Section
 \ref{secstruc} we state  our structural theorems: Theorem \ref{Ainfty}, Theorem \ref{Ainfty+} and Theorem \ref{DPequiv}.  In Section \ref{sec1.5} we state a number of lemmas concerning the interior regularity of weak solutions and concerning the boundary behaviour of non-negative solutions to
 $\L_\K u=0$; the latter were recently established in \cite{LN}. In Section \ref{sec2} we prove Theorem \ref{Ainfty} and Theorem \ref{Ainfty+}. In Section \ref{sec3} we prove Theorem \ref{DPequiv}
and hence, as outlined above and as a consequence, we prove Theorem \ref{per1-}  and Theorem \ref{per1}.  In Section \ref{fabes} we also give,  as we believe that the argument may be of independent interest in the case of operators of Kolmogorov type, a proof of Theorem \ref{per1-} using Rellich type inequalities along the proof of the corresponding result for the heat equation in \cite{FSa}. In Section \ref{sec4}  we apply our findings to homogenization, giving new results for homogenization of operators of Kolmogorov type, and in particular we prove Theorem \ref{th:homogenization}.

\section{Preliminaries}\label{secprim}

\subsection{Group law and metric} The natural family of dilations jointly for the operators $\L_{\mathcal E}$, $\L_{\mathcal P}$, $\L_\K$, $(\delta_r)_{r>0}$, on $\R^{N+1}$, $N:=2m$, is defined by
\begin{equation}\label{dil.alpha.i}
 \delta_r (X,Y,t) =(r X, r^3 Y,r^2 t),
\end{equation}
for $(X,Y,t) \in \R^{N +1}$,  $r>0$.   Furthermore, the classes of operators $\L_{\mathcal E}$, $\L_{\mathcal P}$, $\L_\K$ are closed under the  group law
\begin{equation}\label{e70}
(\tilde X,\tilde Y,\tilde t)\circ (X, Y,t)=(\tilde X+X,\tilde Y+Y-t\tilde X,\tilde t+t),
\end{equation}
where $(X,Y,t),\ (\tilde X,\tilde Y,\tilde t)\in \R^{N+1}$. Note that
\begin{equation}\label{e70+}
(X,Y,t)^{-1}=(-X,-Y-tX,-t),
\end{equation}
and hence
\begin{equation}\label{e70++}
(\tilde  X,\tilde  Y,\tilde  t)^{-1}\circ (X,Y,t)=(X-\tilde  X,Y-\tilde  Y+(
t-\tilde  t)\tilde  X,t-\tilde  t),
\end{equation}
whenever $(X,Y,t),\ (\tilde X,\tilde Y,\tilde t)\in \R^{N+1}$.

Given $(X,Y,t)\in \R^{N+1}$ we let \begin{equation}\label{kolnormint}
\|(X,Y, t)\|:=|(X,Y)|\!+|t|^{\frac{1}{2}},\ |(X,Y)|:=\big|X\big|+\big|Y\big|^{1/3}.
\end{equation}
We recall the following pseudo-triangular
inequalities: there exists a positive constant ${c}$ such that
\begin{eqnarray}\label{e-ps.tr.in}
\quad \|(X,Y,t)^{-1}\|\le {c}  \| (X,Y,t) \|,\ \|(X,Y,t)\circ (\tilde  X,\tilde  Y,\tilde  t)\| \le  {c}  (\| (X,Y,t) \| + \| (\tilde  X,\tilde  Y,\tilde  t)
\|),
\end{eqnarray}
whenever $(X,Y,t),(\tilde  X,\tilde  Y,\tilde  t)\in \R^{N+1}$. Using \eqref{e-ps.tr.in} it  follows immediately that
\begin{equation} \label{e-triangularap}
    \|(\tilde  X,\tilde  Y,\tilde  t)^{-1}\circ (X,Y,t)\|\le c \, \|(X,Y,t)^{-1}\circ (\tilde  X,\tilde  Y,\tilde  t)\|,
\end{equation}
whenever $(X,Y,t),(\tilde  X,\tilde  Y,\tilde  t)\in \R^{N+1}$. Let
\begin{equation}\label{e-ps.distint}
    d((X,Y,t),(\tilde X,\tilde Y,\tilde t)):=\frac 1 2\bigl( \|(\tilde X,\tilde Y,\tilde t)^{-1}\circ (X,Y,t)\|+\|(X,Y,t)^{-1}\circ (\tilde X,\tilde Y,\tilde t)\|).
\end{equation}
Using \eqref{e-triangularap} it follows that
\begin{equation}\label{e-ps.dist}
   \|(\tilde X,\tilde Y,\tilde t)^{-1}\circ (X,Y,t)\|\approx d((X,Y,t),(\tilde X,\tilde Y,\tilde t))\approx \|(X,Y,t)^{-1}\circ (\tilde X,\tilde Y,\tilde t)\|,
\end{equation}
with constants of comparison independent of $(X,Y,t),(\tilde X,\tilde Y,\tilde t)\in \R^{N+1}$. Again using \eqref{e-ps.tr.in} we also see that
\begin{equation} \label{e-triangular}
    d((X,Y,t),(\tilde X,\tilde Y,\tilde t))\le {c} \bigl(d((X,Y,t),(\hat X, \hat Y,\hat t))+d((\hat X, \hat Y,\hat
t),(\tilde X,\tilde Y,\tilde t))\bigr ),
\end{equation}
whenever $(X,Y,t),(\hat X, \hat Y,\hat t),(\tilde X,\tilde Y,\tilde t)\in \R^{N+1}$, and hence $d$ is a symmetric quasi-distance. Based on $d$ we introduce the balls
\begin{equation}\label{e-BKint}
    \mathcal{B}_r(X,Y,t):= \{ (\tilde X,\tilde Y,\tilde t) \in\mathbb R^{N+1} \mid d((\tilde X,\tilde Y,\tilde t),(X,Y,t)) <
r\},
\end{equation}
for $(X,Y,t)\in \R^{N+1}$ and $r>0$.  The measure of the ball $\mathcal{B}_r(X,Y,t)$ is $|\mathcal{B}_r(X,Y,t)|=c(m)r^{{\bf q}}$, where ${\bf q}:=4m+2$.

\subsection{Surface cubes and reference points} Let $\Omega\subset\mathbb R^m$, $m\geq 2$, be a (unbounded) Lipschitz domain as defined in \eqref{Lip} and with constant $M$. Let
\begin{equation} \label{space}\Sigma:=\partial \Omega\times \R^m\times\R =\{(x,x_{m},y,y_{m},t)\in\mathbb R^{N+1}\mid  x_m=\psi(x)\}.\end{equation}
An observation is that $(\Sigma,d,\d\sigma_\K)$ is a space of homogeneous type in the sense of \cite{CW}, with homogeneous dimension  ${\bf q}-1$. Furthermore, $(\mathbb R^{N+1},d,\d X\d Y\d t)$ is also a space of homogeneous type in the sense of \cite{CW}, but with homogeneous dimension  ${\bf q}$.

Let $$Q:=(-1,1)^m\times (-1,1)^m\times (-1,1)$$ and
$$Q_r=\delta_r Q:=\{(r X, r^3 Y,r^2 t)\mid  (X,Y,t)\in Q\}.$$
Given a point $(X_0,Y_0,t_0)\in \R^{N+1}$ we let
$$ Q_r(X_0,Y_0,t_0):=(X_0,Y_0,t_0)\circ Q_r:=\{(X_0,Y_0,t_0)\circ(X,Y,t)\mid  (X,Y,t)\in Q_r\}.$$
Furthermore, if $(X_0,Y_0,t_0)\in \partial\Omega\times \mathbb R^m\times \mathbb R$ then we set
$$\Delta_r(X_0,Y_0,t_0):=(\partial\Omega\times \mathbb R^m\times \mathbb R)\cap Q_r(X_0,Y_0,t_0).$$
We will frequently, and for brevity, write $ Q_r$ and $\Delta_r$ for $ Q_r(X_0,Y_0,t_0)$ and $\Delta_r(X_0,Y_0,t_0)$ whenever the point $(X_0,Y_0,t_0)$ is clear from the context. At instances we will simply also write $\Delta$ for $\Delta_r(X_0,Y_0,t_0)$ whenever the point $(X_0,Y_0,t_0)$ and the scale $r$ does not have to be stated explicitly. Given a positive constant $c$, $c\Delta:=\Delta_{cr}(X_0,Y_0,t_0)$.

Given $\varrho>0$ and $\Lambda>0$, we let
\begin{equation}\label{pointsref2}
\begin{split}
    A_{\varrho,\Lambda}^+&:= \left(0,\Lambda\varrho,0, -\tfrac 2 3\Lambda\varrho^3,\varrho^2\right)\in\mathbb R^{m-1}\times\mathbb R\times\mathbb R^{m-1}\times\mathbb R\times\mathbb R,\\
    A_{\varrho,\Lambda}^-&:= \left(0,\Lambda\varrho,0, \tfrac 2 3\Lambda\varrho^3,-\varrho^2\right)\in\mathbb R^{m-1}\times\mathbb R\times\mathbb R^{m-1}\times\mathbb R\times\mathbb R,
\end{split}
\end{equation}
and $$A_{\varrho,\Lambda}^\pm(X_0,Y_0,t_0):=(X_0,Y_0,t_0)\circ A_{\varrho,\Lambda}^\pm,$$
whenever $(X_0,Y_0,t_0)\in\mathbb R^{N+1}$. Furthermore, given $\Delta:=\Delta_r(X_0,Y_0,t_0)$ we let
$$A_{\Delta,\Lambda}^\pm:=A_{r,\Lambda}^\pm(X_0,Y_0,t_0).$$

\subsection{Qualitative assumptions on the coefficients} Central to our arguments are the boundary estimates recently proved  in \cite{LN}.  In \cite{LN} we consider solutions to the equation $\L u=0$ where $\L$ is the operator
              \begin{eqnarray}\label{e-kolm-ndre}
   \nabla_X\cdot(A(X,Y,t)\nabla_X)+X\cdot\nabla_Y-\partial_t,
    \end{eqnarray}
    in $\mathbb R^{N+1}$, $N=2m$, $m\geq 1$, $(X,Y,t):=(x_1,...,x_{m},y_1,...,y_{m},t)\in \mathbb R^{m}\times\mathbb R^{m}\times\mathbb R$. We assume that $$A=A(X,Y,t)=\{a_{i,j}(X,Y,t)\}_{i,j=1}^{m}$$ is a real-valued, $m\times m$-dimensional, symmetric matrix valued function satisfying
    \begin{eqnarray}\label{eq2re}
      \kappa^{-1}|\xi|^2\leq \sum_{i,j=1}^{m}a_{i,j}(X,Y,t)\xi_i\xi_j,\quad \ \ |A(X,Y,t)\xi\cdot\zeta|\leq \kappa|\xi||\zeta|,
    \end{eqnarray}
    for some $\kappa\in [1,\infty)$, and for all $\xi,\zeta\in \mathbb R^{m}$, $(X,Y,t)\in\mathbb R^{N+1}$. Throughout \cite{LN} we also assume that
    \begin{eqnarray}\label{compre}
    \quad A=A(X,Y,t)\equiv I_m\mbox{ outside some arbitrary but fixed compact subset of $\mathbb R^{N+1}$},
    \end{eqnarray}
    and that
        \begin{eqnarray}\label{eq2+re}
    a_{i,j}\in C^\infty(\mathbb R^{N+1})
    \end{eqnarray}
    for all $i,j\in\{1,...,m\}$. In \cite{LN} the assumptions in \eqref{compre} and \eqref{eq2+re} are only used in a qualitative fashion.  In particular, from the perspective of the operator the constants of the quantitative estimates in \cite{LN} only depend on $m$ and $\kappa$. To be consistent with \cite{LN}, we in \eqref{keyassump} have included the qualitative assumptions stated in \eqref{compre}, \eqref{eq2+re}.

 \subsection{Function spaces} Let $U_X\subset\mathbb R^{m}$, $U_Y\subset\mathbb R^{m}$ be bounded domains, i.e., bounded, open and  connected sets in $\mathbb R^m$. Let $J\subset\mathbb R$  be an open and bounded interval. We denote by ${H}_X^1(U_X)$ the Sobolev
space of functions $g\in L_{}^2(U_X)$  whose distribution gradient in $U_X$ lies in $(L^2(U_X))^m$, i.e.
  \begin{eqnarray*}\label{fspace-}
{H}_X^1(U_X):=\{g\in L_{X}^2(U_X)\mid  \nabla_Xg\in (L^2(U_X))^m\},
    \end{eqnarray*}
    and we set
    $$||g||_{{H}_X^1(U_X)}:=||g||_{L^2(U_X)}+|||\nabla_Xg|||_{L^2(U_X)},\ g\in {H}_X^1(U_X).$$
    We let ${H}_{X,0}^1(U_X)$ denote the closure of $C_0^\infty(U_X)$ in the norm of ${H}_X^1(U_X)$. If $U_X$ is a bounded Lipschitz domain, then
    $C^\infty(\overline{U_X})$ is dense in ${H}_X^1(U_X)$. In particular, equivalently we could define ${H}_X^1(U_X)$ as the closure of $C^\infty(\overline{U_X})$
     in the norm $||\cdot||_{{H}_X^1(U_X)}$. We let ${H}_X^{-1}(U_X)$ denote the dual to ${H}_X^1(U_X)$, whose elements act on functions in ${H}_{X,0}^1(U_X)$ through the
     duality pairing $\langle \cdot,\cdot\rangle:=\langle \cdot,\cdot\rangle_{H_X^{-1}(U_X),H_{X,0}^{1}(U_X)}$.

     In analogy with the definition of ${H}_X^1(U_X)$, we let $W(U_X\times U_{Y}\times J)$ be the closure of $C^\infty(\overline{U_X\times U_{Y}\times J)}$ in the norm
     \begin{align}\label{weak1-+}
     ||u||_{W(U_X\times U_{Y}\times J)}&:=||u||_{L_{Y,t}^2(U_{Y}\times J,H^1(U_X))}+||(-X\cdot\nabla_Y+\partial_t)u||_{L_{Y,t}^2(U_{Y}\times J,{H}_X^{-1}(U_X))}\notag\\
     &:=\biggl (\iint_{U_{Y}\times J}||u(\cdot,Y,t)||_{{H}_X^1(U_X)}^2\, \d Y\d t\biggl )^{1/2}\\
     &+\biggl (\iint_{U_{Y}\times J}||(-X\cdot\nabla_Y+\partial_t)u(\cdot,Y,t)||_{{H}_X^{-1}(U_X)}^2\, \d Y\d t\biggl )^{1/2}.\notag
    \end{align}
    In particular, $W(U_X\times U_{Y}\times J)$ is a Banach space and $u\in W(U_X\times U_{Y}\times J)$ if and only if \begin{eqnarray}\label{weak1-}
u\in L_{Y,t}^2(U_{Y}\times J,H_X^1(U_X))\mbox{ and } (-X\cdot\nabla_Y+\partial_t)u\in  L_{Y,t}^2(U_{Y}\times J,H_X^{-1}(U_X)).
    \end{eqnarray}

    Let $\Omega\subset\mathbb R^m$, $m\geq 2$, be a (unbounded) Lipschitz domain as defined in \eqref{Lip} and with constant $M$. We say that
    $u\in W_{\mbox{loc}}(\Omega\times \mathbb R^{m}\times \mathbb R)$ if $u\in W(U_X\times U_{Y}\times J)$ whenever $U_X\subset\mathbb R^{m}$, $U_Y\subset\mathbb R^{m}$ are bounded domains, $J\subset\mathbb R$  is an open and bounded interval, and $\overline{U_X\times U_{Y}\times J}$ is compactly contained in $\Omega\times \mathbb R^{m}\times \mathbb R$.

\subsection{Weak solutions} Let $U_X$, $U_{Y}$ and $J$ be as introduced in the previous subsection. We say that $u$ is a weak solution to
    \begin{equation}\label{dpweak+}
	\L_{\K} u = 0  \text{ in} \ U_X\times U_{Y}\times J,
\end{equation}
    if $u\in W(U_X\times U_{Y}\times J)$ and if
\begin{equation}\label{weak3}
    \begin{split}
     0 =&\iiint_{U_X\times U_{Y}\times J}\ A(X)\nabla_Xu\cdot \nabla_X\phi\, \d X \d Y \d t\\
    &+\iint_{U_{Y}\times J}\ \langle (-X\cdot\nabla_Y+\partial_t)u(\cdot,Y,t),\phi(\cdot,Y,t)\rangle\, \d Y \d t,
\end{split}
\end{equation}
for all $ \phi\in L_{Y,t}^2(U_{Y}\times J,H_{X,0}^1(U_X))$. Here, again, $\langle \cdot,\cdot\rangle=\langle \cdot,\cdot\rangle_{H_X^{-1}(U_X),H_{X,0}^{1}(U_X)}$ is the duality pairing between $H_X^{-1}(U_X)$ and $H_{X,0}^1(U_X)$.

\begin{definition}\label{weaklip} Let $\Omega\subset\mathbb R^m$, $m\geq 2$, be a (unbounded) Lipschitz domain as defined in \eqref{Lip} and with constant $M$. We say that $u$ is a weak solution to
    \begin{equation}\label{dpweak+g}
	\L_{\K} u = 0  \text{ in} \ \Omega\times \mathbb R^{m}\times \mathbb R,
\end{equation}
if $u\in W_{\mbox{loc}}(\Omega\times \mathbb R^{m}\times \mathbb R)$ and if $u$  satisfies \eqref{weak3}, whenever $\overline{U_X\times U_{Y}\times J}$ is compactly contained in $\Omega\times \mathbb R^{m}\times \mathbb R$.
\end{definition}

 Note that if $u$ is a weak solution to the equation $\L_\K u=0$  in $\Omega\times \mathbb R^{m}\times \mathbb R$, then it is a weak solution in the sense of distributions, i.e.
                              \begin{eqnarray}\label{weak4}
                              \iiint_{}\ \bigl(A(X)\nabla_Xu\cdot \nabla_X\phi-u(-X\cdot \nabla_Y+\partial_t)\phi\bigr )\, \d X \d Y \d t=0,
                               \end{eqnarray}
                               whenever $\phi\in C_0^\infty(\Omega\times \mathbb R^{m}\times \mathbb R)$.

\subsection{The Dirichlet problem and associated boundary measures}  In \cite{LN1} we have conducted a  study of the existence and uniqueness of weak solutions
 to $$\nabla_X\cdot(A(X,Y,t)\nabla_X u)+X\cdot\nabla_Yu-\partial_tu=0,$$ as well as the existence and uniqueness of weak solutions to the Dirichlet problem with continuous boundary data. On  \cite{LN1}, Theorem 1.2, Theorem 1.3, and Theorem 1.4, are particularly relevant to this paper.  Theorem 1.2 in \cite{LN1} concerns the existence of weak solutions to \eqref{e-bvpuu}. However, in \cite{LN1} a stronger notion of weak solutions
is used, see Definition 2 in \cite{LN1}, as we there demand certain Sobolev regularity up to the boundary of $\Omega\times \mathbb R^m\times \mathbb R$. Theorem 1.3 in \cite{LN1} concerns the uniqueness of weak solutions to \eqref{e-bvpuu} and in Theorem 1.4 in \cite{LN1} we consider the continuous Dirichlet problem and the representation of the solution using associated parabolic measures. We here state the following consequence of these results.

\begin{theorem}\label{EUDP} Assume that $A$ satisfies \eqref{eq:Aellip} and \eqref{compre}. Let $f\in
C_0(\partial\Omega\times \mathbb R^m\times \mathbb R)$. Then there exists $u\in C(\bar \Omega\times \mathbb R^m\times \mathbb R )$ such that $u=u_f$ is a  weak solution to the Dirichlet problem
\begin{equation} \label{e-bvpuu}
\begin{cases}
	\L_{\K} u = 0  &\text{in} \ \Omega\times \mathbb R^m\times \mathbb R , \\
      u = f  & \text{on} \ \partial \Omega\times \mathbb R^m\times \mathbb R,
\end{cases}
\end{equation}
in the sense of Definition \ref{weaklip}. If $u$ is bounded, then $u=u_f$ is the unique weak solution to \eqref{e-bvpuu} and in this case there exists, for every $(X,Y,t)\in \Omega\times \mathbb R^m\times \mathbb R $, a unique probability
measure  $\omega_{\K}(X,Y,t,\cdot)$ on $\partial\Omega\times \mathbb R^m\times \mathbb R $ such that
\begin{eqnarray}  \label{1.1xxuu}
u(X,Y,t)=\iiint_{\partial\Omega\times \mathbb R^m\times \mathbb R }f(\tilde X,\tilde Y,\tilde t)\d \omega_{\K}(X,Y,t,\tilde X,\tilde Y,\tilde t).
\end{eqnarray}
\end{theorem}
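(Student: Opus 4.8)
The plan is to deduce Theorem \ref{EUDP} from the existence, uniqueness and representation theory of \cite{LN1}; the only genuine work will be to make explicit the boundedness hypothesis (inadvertently omitted in \cite{LN1}) and to pass from the notion of weak solution used there, which requires Sobolev regularity up to $\partial\Omega\times\mathbb R^m\times\mathbb R$ (Definition 2 in \cite{LN1}), to the purely interior notion of Definition \ref{weaklip}.

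\emph{Existence, continuity and the representation.} First I would invoke Theorem 1.2 and Theorem 1.4 in \cite{LN1}: for $f\in C_0(\partial\Omega\times\mathbb R^m\times\mathbb R)$ these yield a solution $u_f$ of \eqref{e-bvpuu} in the strong sense of Definition 2 in \cite{LN1} that extends to an element of $C(\bar\Omega\times\mathbb R^m\times\mathbb R)$ with $u_f=f$ on $\partial\Omega\times\mathbb R^m\times\mathbb R$, together with the representation \eqref{1.1xxuu} once boundedness is available. Next I would check that such a $u_f$ is a weak solution in the sense of Definition \ref{weaklip} by localization: for any $U_X\times U_Y\times J$ compactly contained in $\Omega\times\mathbb R^m\times\mathbb R$ the interior regularity of $u_f$ puts its restriction in $W(U_X\times U_Y\times J)$, and the identity \eqref{weak3} for test functions $\phi\in L^2_{Y,t}(U_Y\times J,H^1_{X,0}(U_X))$ follows from the stronger identity of Definition 2 in \cite{LN1} by restriction and density, so in particular \eqref{weak4} holds. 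Finally, using \eqref{compre} (the coefficients equal the identity outside a compact set) and the comparison principle for bounded weak solutions — comparing $u_f$ with the constants $\pm\|f\|_{L^\infty}$ — I would record $\|u_f\|_{L^\infty}\le\|f\|_{L^\infty}$, so that the constructed solution is bounded and the proviso ``$u$ is bounded'' is actually satisfied.

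\emph{Uniqueness and that $\omega_\K$ is a probability measure.} Boundedness is essential here: otherwise one could add to $u_f$ an unbounded weak solution of $\L_\K w=0$ vanishing on the boundary and lose uniqueness. Given two bounded weak solutions $u_1,u_2$ of \eqref{e-bvpuu} in the sense of Definition \ref{weaklip} with the same continuous data, I would apply the comparison/maximum principle for bounded weak solutions of $\L_\K w=0$ — Theorem 1.3 in \cite{LN1}, read with the restriction $w=u_1-u_2\in L^\infty$ — to conclude $w\equiv0$. With existence and uniqueness established, for fixed $(X,Y,t)\in\Omega\times\mathbb R^m\times\mathbb R$ the functional $f\mapsto u_f(X,Y,t)$ on $C_0(\partial\Omega\times\mathbb R^m\times\mathbb R)$ is linear (by uniqueness), nonnegative ($f\ge0\Rightarrow u_f\ge0$ by the maximum principle), and of norm $\le1$, so the Riesz representation theorem gives a unique positive Radon measure $\omega_\K(X,Y,t,\cdot)$ of total mass $\le1$ realizing \eqref{1.1xxuu}. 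To see that the mass is exactly $1$, hence that $\omega_\K(X,Y,t,\cdot)$ is a probability measure, I would take $f_k\uparrow1$ in $C_0(\partial\Omega\times\mathbb R^m\times\mathbb R)$ and check, via a barrier at infinity (or directly from the construction in \cite{LN1}), that $u_{f_k}(X,Y,t)\to1$, so that no mass escapes to infinity. This reproduces Theorem 1.4 in \cite{LN1} in the special case $A=A(X)$ and $\Omega\times\mathbb R^m\times\mathbb R$ a $(Y,t)$-independent Lipschitz cylinder.

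\emph{Main obstacle.} The citations are routine; the delicate point is the mismatch between the two notions of weak solution. For existence it is harmless, since the stronger regularity of Definition 2 in \cite{LN1} implies the interior formulation \eqref{weak3}. For uniqueness it is the crux: one must know that the maximum principle of \cite{LN1}, or an argument in its spirit, applies to the a priori larger class of bounded $W_{\mathrm{loc}}$-weak solutions (Definition \ref{weaklip}) with continuously attained zero boundary data, which need not a priori satisfy Definition 2 in \cite{LN1}. This is subtle precisely because $\L_\K$ lacks classical coercivity, due to the hypoelliptic transport term $X\cdot\nabla_Y-\partial_t$, and because $\Omega\times\mathbb R^m\times\mathbb R$ is unbounded — which is exactly why the boundedness restriction must be imposed. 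Granted it, the interior De Giorgi--Nash--Moser theory for $\L_\K$ from \cite{Ietal}, the energy estimates of \cite{LN1}, and the fact that $\Sigma=\partial\Omega\times\mathbb R^m\times\mathbb R$ and $\mathbb R^{N+1}$ are spaces of homogeneous type should suffice to run the comparison argument.
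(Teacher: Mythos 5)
Your proposal matches the paper's proof in substance: the theorem is deduced from Theorems 1.2--1.4 of \cite{LN1}, with the clarification (also made in the paper) that the uniqueness statements there tacitly assume $u\in L^\infty(\Omega\times\mathbb R^m\times\mathbb R)$ and that this boundedness restriction must be restored. If anything you are more careful than the paper on the one genuinely delicate point: since Definition~\ref{weaklip} is a \emph{weaker} notion of weak solution than Definition~2 in \cite{LN1}, uniqueness within the larger class of bounded $W_{\mathrm{loc}}$-solutions is a priori a \emph{stronger} assertion, not literally ``a special case'' of Theorem~1.4 in \cite{LN1} as the paper's proof phrases it; you correctly flag this mismatch as the crux and resolve it the same way the paper implicitly does, by noting that the comparison/maximum-principle argument of \cite{LN1} runs just as well for interior weak solutions with continuously attained boundary values. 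The extra material in your write-up (localization to pass between the two weak formulations, Riesz representation for $\omega_\K$, the barrier argument for total mass one) is detail the paper delegates to \cite{LN1} rather than a different route.
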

\begin{proof} As stated above, the notion of weak solutions introduced in  Definition \ref{weaklip} is weaker than the notion of weak solutions introduced in Definition 2 in \cite{LN1}. In particular, concerning the existence part of Theorem \ref{EUDP}, Theorems 1.2-1.4 in \cite{LN1} give a stronger result. Concerning
uniqueness,  Theorem 1.3 and Theorem 1.4 in \cite{LN1}, an important piece of information is
neglected in the statements of these two theorems. As can be seen from the proofs of Theorem 1.3 and Theorem 1.4 in \cite{LN1}, this information concerns the fact that in the unbounded setting  $\Omega\times \mathbb R^m\times \mathbb R$ we need a condition at infinity to ensure uniqueness and what we prove is the uniqueness of bounded weak solutions. In particular, in Theorem 1.3 in \cite{LN1} it  should be stated that
$g\in W(\mathbb R^{N+1})\cap L^\infty(\mathbb R^{N+1})$ and that $u$ is unique if $u\in L^\infty(\Omega\times \mathbb R^m\times \mathbb R)$. Similarly, in Theorem 1.4 in \cite{LN1} it  should be stated that $u$ is unique if $u\in L^\infty(\Omega\times \mathbb R^m\times \mathbb R)$. In Theorem 1.3 and Theorem 1.4 in \cite{LN1} we simply missed stating the obvious restriction $u\in L^\infty(\Omega\times \mathbb R^m\times \mathbb R)$ under which the proofs in \cite{LN1} are given. With this clarification,
Theorem \ref{EUDP} is a special case of Theorem 1.4 in \cite{LN1}.
\end{proof}

The measure $\omega_{\K}(X,Y,t,E)$ introduced in Theorem \ref{EUDP} is referred to as the parabolic measure, or Kolmogorov measure to distinguish it from the parabolic measure associated to $\L_\mathcal{P}$, associated to $\L_{\K}$ in $\Omega\times \mathbb R^m\times \mathbb R$, at $(X,Y, t)\in \Omega\times \mathbb R^m\times \mathbb R $ and of $E\subset\partial\Omega\times \mathbb R^m\times \mathbb R $. Properties of $\omega_{\K}(X,Y,t,\cdot)$ govern the Dirichlet problem in \eqref{e-bvpuu}. The corresponding elliptic and parabolic measures on $\partial\Omega$ and $\partial\Omega\times \mathbb R$, $\omega_{\E}$ and  $\omega_{\P}$, are introduced analogously.

\subsection{The non-tangential maximal operator}\label{ntmaxop} Given a (unbounded)  Lipschitz domain $\Omega \subset\mathbb R^{m}$ with constant $M$, $$(X_0,Y_0,t_0)=((x_0,\psi(x_0)),Y_0,t_0)\in\partial\Omega \times \R^m \times \R,$$ and $\eta>0$, we introduce the (non-tangential) cone
\begin{equation}\label{nt-cone}
    \Gamma^\eta(X_0,Y_0,t_0)
    := \{(X,Y,t)\in \Omega \times \R^m \times \R \mid  d((X,Y,t),(X_0,Y_0,t_0)) < \eta |x_m-\psi(x_0)|\}.
\end{equation}
Given a function $u$ defined in $\Omega\times\R^m\times \R$  we consider the non-tangential maximal operator
\begin{equation}\label{eq:N*}
N^{\eta}(u)(X_0,Y_0,t_0):= \sup_{(X,Y,t) \in \Gamma^\eta(X_0,Y_0,t_0)} |u(X,Y,t)|.
\end{equation}
If $f$ is defined on $\partial \Omega \times \R^m \times \R$ and $(X_0,Y_0,t_0)\in\partial \Omega \times \R^m \times \R$, then we say that $u(X_0,Y_0,t_0)=f(X_0,Y_0,t_0)$ non-tangentially (n.t.) if
\[
\lim_{\substack{(X,Y,t)\in \Gamma^\eta(X_0,Y_0,t_0)\\ (X,Y,t)\to (X_0,Y_0,t_0)}}u(X,Y,t)=f(X_0,Y_0,t_0),
\]
where $\eta=\eta(M)$ is chosen so that $(\partial \Omega \times \R^m \times \R) \cap\Gamma^\eta(X_0,Y_0,t_0)=\{(X_0,Y_0,t_0)\}$. With this choice of $\eta$ we simply write $N(u)$ for $ N^\eta(u)$. Furthermore, given $\delta>0$ we introduce the truncated cone
\begin{equation}\label{nt-conetrunc}
    \Gamma^\eta_\delta(X_0,Y_0,t_0):=\Gamma^\eta(X_0,Y_0,t_0)\cap\mathcal{B}_\delta(X_0,Y_0,t_0),
\end{equation}
and the truncated non-tangential maximal operator
\begin{equation}\label{eq:N*trunc}
N^{\eta}_\delta(u)(X_0,Y_0,t_0):= \sup_{(X,Y,t) \in \Gamma_\delta^\eta(X_0,Y_0,t_0)} |u(X,Y,t)|.
\end{equation}
Again with $\eta$ fixed, we write $N_\delta(u)$ for $ N^\eta_\delta(u)$. For more on non-tangential maximal functions in the elliptic context we refer to \cite{Ke}.

\subsection{Conventions}
Throughout the paper we will use following conventions. $c$ will, if not otherwise stated, denote a constant satisfying $1\leq c<\infty$. We write that $c_1\lesssim c_2$ if $c_1/c_2$ is bounded from above by a positive constant depending only on $m$, $\kappa$, and $M$, if not otherwise stated. We write $c_1\approx c_2$ if $c_1\lesssim c_2$ and $c_2\lesssim c_1$.

Given a point  $(X,Y,t)\in\mathbb R^m\times \R^m \times \R$ we let $\pi_X(X,Y,t):=X$, $\pi_{X,t}(X,Y,t):=(X,t)$. Similarly, if $\Delta\subset\partial\Omega\times \R^m \times \R$, then we let $\pi_X(\Delta)$ denote the projection of
 $\Delta$ onto the $X$ coordinate, we let $\pi_{X,t}(\Delta)$ denote the projection of
 $\Delta$ onto the $(X,t)$ coordinates.

\section{Statements of the structural theorems}\label{secstruc}

Our structural theorems concern the quantitative relations between the measures $\omega_{\mathcal{E}}$,  $\omega_{\mathcal{P}}$,  $\omega_{\mathcal{K}}$ and the (physical) measures $\sigma_{\E}$, $\sigma_{\P}$, $\sigma_{\K}$. We first prove the following relations between the measures.

\begin{theorem}\label{Ainfty} Assume \eqref{keyassump}. Let $\omega_{\mathcal{E}}$,  $\omega_{\mathcal{P}}$,  and $\omega_{\mathcal{K}}$ be the elliptic, parabolic and Kolmogorov measure associated to
$\L_\mathcal{E}$, $\L_\mathcal{P}$, $\L_\mathcal{K}$ in $\Omega $, $\Omega \times \R$ and $ \Omega \times \R^m \times \R$, respectively. Then there exist
 $\Lambda=\Lambda(m,M)$, $1\leq \Lambda<\infty$ and $c=c(m,\kappa,M)$,  $1\leq c<\infty$ such that the following is true. Consider $\Delta:=\Delta_r(X_0,Y_0,t_0)\subset \partial\Omega\times \mathbb R^m\times \mathbb R$. Then
 \begin{align*}
 \frac {\sigma_\K(\Delta)\omega_{\K}(A_{c\Delta,\Lambda}^+,\tilde \Delta)}{\sigma_\K(\tilde\Delta)}&\approx
 \frac {\sigma_\P(\pi_{X,t}(\Delta))\omega_{\P}(\pi_{X,t}(A_{c\Delta,\Lambda}^+),\pi_{X,t}(\tilde \Delta))}{\sigma_\P(\pi_{X,t}(\tilde \Delta))}\\
 &\approx \frac {\sigma_\E(\pi_{X}(\Delta))\omega_{\E}(\pi_{X}(A_{c\Delta,\Lambda}^+),\pi_{X}(\tilde \Delta))}{\sigma_\E(\pi_{X}(\tilde \Delta))},\end{align*}
whenever $\tilde \Delta\subset \Delta$.
\end{theorem}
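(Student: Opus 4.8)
The plan is to establish the three-way comparison by reducing the Kolmogorov measure $\omega_\K$ to the parabolic measure $\omega_\P$, and then $\omega_\P$ to the elliptic measure $\omega_\E$, exploiting the product structure of the domain $\Omega\times\mathbb R^m\times\mathbb R$ and of the operators $\L_\K$, $\L_\P$, $\L_\E$ in the $(Y,t)$- and $t$-directions, respectively. The key observation is that the surface cube $\Delta_r(X_0,Y_0,t_0)$ and the reference points $A_{c\Delta,\Lambda}^\pm$ are designed so that their projections $\pi_{X,t}$ and $\pi_X$ are exactly the corresponding objects for $\L_\P$ and $\L_\E$; this is where the explicit choice of the $Y$-component $\Lambda\varrho$ and the $t$-component $\mp\tfrac23\Lambda\varrho^3$ in \eqref{pointsref2}, tuned to the group law \eqref{e70} and the dilations \eqref{dil.alpha.i}, pays off.

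First, I would fix a ratio-type quantity: for a surface cube $\Delta$ and a sub-cube $\tilde\Delta\subset\Delta$, the quantity $\sigma_\K(\Delta)\omega_\K(A^+_{c\Delta,\Lambda},\tilde\Delta)/\sigma_\K(\tilde\Delta)$ is comparable, via the doubling property of $\omega_\K$ and the interior Harnack inequality (the interior estimates quoted in Section \ref{sec1.5}), to a normalized Kolmogorov-measure density that is essentially the value at the corkscrew point of the $\L_\K$-solution with boundary data $\charfn_{\tilde\Delta}$, renormalized by surface measure. The heart of the matter is then a comparison of solutions: if $u_\E$ solves $\L_\E u_\E=0$ in $\Omega$ with $u_\E=\charfn_{\pi_X(\tilde\Delta)}$ on $\partial\Omega$, then $v(X,Y,t):=u_\E(X)$ solves $\L_\K v=0$ in $\Omega\times\mathbb R^m\times\mathbb R$ — but with boundary data $\charfn_{\pi_X(\tilde\Delta)\times\mathbb R^m\times\mathbb R}$, which is not compactly supported. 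To handle the localization one uses the boundary Harnack principle for $\L_\K$ from \cite{LN}: near the cube $\Delta$, any two non-negative $\L_\K$-solutions vanishing on $2\Delta\setminus$ (part of the boundary) have comparable ratio, so the "tails" coming from $Y$ and $t$ far from $(Y_0,t_0)$ can be controlled and shown to contribute only a bounded multiplicative factor. The same argument, one dimension down, passes from $\omega_\P$ to $\omega_\E$, using the boundary Harnack principle for parabolic equations in Lip(1,1/2) cylinders (a special case of \cite{LN}).

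Concretely, the steps in order are: (1) record that $\Delta_r(X_0,Y_0,t_0)$ projects onto $\pi_{X,t}(\Delta)=\Delta^\P_r(X_0,t_0)$ and $\pi_X(\Delta)=\Delta^\E_r(x_0)$, with $\sigma_\K(\Delta)\approx\sigma_\P(\pi_{X,t}(\Delta))\cdot r^3$ (the $Y$-width is $\approx r^3$) and $\sigma_\P(\pi_{X,t}(\Delta))\approx\sigma_\E(\pi_X(\Delta))\cdot r^2$, so the surface-measure factors in the three expressions match up once the $Y$- and $t$-scalings cancel in the ratios; (2) show, using the lifting of $\L_\E$- (resp. $\L_\P$-) solutions to $\L_\K$- (resp. $\L_\P$-) solutions together with the maximum principle, the doubling property, and the boundary Harnack inequality from \cite{LN}, that $\omega_\K(A^+_{c\Delta,\Lambda},\tilde\Delta)\approx\omega_\P(\pi_{X,t}(A^+_{c\Delta,\Lambda}),\pi_{X,t}(\tilde\Delta))\cdot\sigma_\K(\Delta)/(\sigma_\P(\pi_{X,t}(\Delta))\,r^3)$-type identities after normalization — i.e. that the two normalized densities agree up to constants depending only on $m,\kappa,M$; (3) iterate one step down for $\omega_\P$ versus $\omega_\E$; (4) assemble the chain.

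I expect the main obstacle to be Step (2): making rigorous the passage from the global (non-localized) lifted solution to the localized Kolmogorov measure of $\tilde\Delta$. The lifted solution $v(X,Y,t)=u_\E(X)$ has the right behaviour in $X$ but is constant in $(Y,t)$, whereas $\omega_\K(\cdot,\tilde\Delta)$ "sees" only the bounded piece $\tilde\Delta$; bridging this gap requires the backward-in-time Harnack inequality and the boundary Harnack principle for $\L_\K$ to show that the $(Y,t)$-localization costs only a bounded factor, and one must be careful that the reference point $A^+_{c\Delta,\Lambda}$ is placed "in the past" (note the $+\varrho^2$ in the $t$-slot and the sign of the $Y$-slot) precisely so that the relevant Carleson-type and boundary Harnack estimates from \cite{LN} apply. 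The degenerate, non-symmetric nature of $\L_\K$ — in particular the drift term $X\cdot\nabla_Y$ which couples $X$ and $Y$ and breaks time-reversal symmetry — is what makes the choice of $\Lambda$ (depending on $m$ and $M$) and the asymmetric reference points unavoidable, and verifying that these choices are compatible with the hypotheses of the boundary Harnack results is the delicate point.
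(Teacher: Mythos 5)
Your overall strategy, reducing $\omega_\K$ to $\omega_\E$ via the product structure and the lifting $v(X,Y,t):=u_\E(X)$, is a natural idea and the lifting is indeed $\L_\K$-harmonic. However, the route you outline hits a genuine obstruction at precisely the step you yourself flag as delicate, and I don't think it can be made to work in the form you propose.

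The difficulty is this. Your lifted solution $v$ corresponds, by uniqueness, to $\omega_\K\bigl(\cdot,\pi_X(\tilde\Delta)\times\mathbb R^m\times\mathbb R\bigr)$, i.e. to the measure of an infinite strip, while you need $\omega_\K(\cdot,\tilde\Delta)$ for a single compact surface cube. To pass from one to the other you must show that the Poisson density $K_\K(A^+_{c\Delta,\Lambda},\cdot)$ is essentially constant (up to $m,\kappa,M$-dependent constants) across the roughly $(r/\tilde r)^{3m+2}$ disjoint translates of $\tilde\Delta$ in the $(Y,t)$-directions that make up the mass of the strip. That is a statement about comparing the same harmonic function at two different boundary locations, and the boundary Harnack inequality of \cite{LN} (Theorem \ref{thm:quotients}) does \emph{not} give this: it compares the quotient of \emph{two} nonnegative solutions both vanishing on $(\partial\Omega\times\mathbb R^m\times\mathbb R)\cap Q_{2r}$. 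The pair you want to feed it — the lifted function $v$ and the map $(X,Y,t)\mapsto\omega_\K(X,Y,t,\tilde\Delta)$ — vanish on different parts of the boundary (the complement of a strip versus the complement of a cube), so the hypotheses of Theorem \ref{thm:quotients} are not met. Iterating the doubling property of Theorem \ref{thm:doub} to move across the $(Y,t)$-tiling also fails quantitatively, as it produces a factor growing polynomially in $r/\tilde r$.

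The paper's proof sidesteps this entirely by first converting each measure into a Green function value via Lemma \ref{greenmeasurerelation} (and its elliptic/parabolic analogues), and only then applying Theorem \ref{thm:quotients}. The point is that $G_\K(A^+_{c\Delta,\Lambda},\cdot)$ and the lift of $G_\E(\pi_X(A^+_{c\Delta,\Lambda}),\pi_X(\cdot))$ both vanish on the entire lateral boundary (away from the pole), so the quotient comparison applies cleanly; after that, the normalization is supplied by the two-sided bound $G_\K(A^+_{c\Delta,\Lambda},A^+_{\Delta,\Lambda})\approx r^{2-{\bf q}}\approx(r\sigma_\K(\Delta))^{-1}$ in display \eqref{clam} (whose lower bound the paper notes is itself nontrivial) together with its elliptic analogue, plus Lemma \ref{greenestimate} and the adjoint version of Lemma \ref{lem4.7} to swap $A^\pm$. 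None of these Green function ingredients appear in your plan, and they are exactly what makes the localization argument close. I would also flag one small but telling slip: $A^+_{\varrho,\Lambda}$ has $t$-component $+\varrho^2$, i.e. it sits in the \emph{future} of the cube center, which is what the forward-in-time Dirichlet problem requires; describing it as "in the past" suggests a mis-reading of \eqref{pointsref2}, and getting this orientation right matters when invoking the Carleson estimate and the quotient theorem.
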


Theorem \ref{Ainfty} states that the measures $\omega_{\K}(A_{c\Delta,\Lambda}^+,\cdot)$, $\omega_{\P}(\pi_{X,t}(A_{c\Delta,\Lambda}^+),\cdot)$, $\omega_{\E}(\pi_{X}(A_{c\Delta,\Lambda}^+),\cdot)$ are all comparable in the sense stated when evaluated on the surface cube  $\tilde \Delta\subset \Delta$. As we will prove,  if $\tilde \Delta=\Delta_{\tilde r}$ and if
   \begin{eqnarray}\label{assump}
   \lim_{{\tilde r}\to 0} \frac {\omega_{\E}(\pi_{X}(A_{c\Delta,\Lambda}^+),\pi_{X}( \Delta_{\tilde r}))}{\sigma_\E(\pi_{X}( \Delta_{\tilde r}))}\end{eqnarray}
   exists, then also the limits
     \begin{eqnarray}
 \lim_{{\tilde r}\to 0}\frac {\omega_{\K}(A_{c\Delta,\Lambda}^+, \Delta_{\tilde r})}{\sigma_\K(\Delta_{\tilde r})}\mbox{ and } \lim_{{\tilde r}\to 0}\frac {\omega_{\P}(\pi_{X,t}(A_{c\Delta,\Lambda}^+),\pi_{X,t}( \Delta_{\tilde r}))}{\sigma_\P(\pi_{X,t}( \Delta_{\tilde r}))}
 \end{eqnarray}
  exist and all limits are comparable in the sense of Theorem \ref{Ainfty}. Indeed, using \eqref{assump} we will be able to deduce that the Poisson kernels
    \begin{align*}
   K_\E (\pi_{X}(A_{c\Delta,\Lambda}^+),X)&:=\frac {\d\omega_{\E}}{\d\sigma_\E}(\pi_{X}(A_{c\Delta,\Lambda}^+),X),\notag\\
   K_\P (\pi_{X,t}(A_{c\Delta,\Lambda}^+),X,t)&:=\frac {\d\omega_{\P}}{\d\sigma_\P}(\pi_{X,t}(A_{c\Delta,\Lambda}^+),X,t),\notag\\
   K_\K (A_{c\Delta,\Lambda}^+,X,Y,t)&:=\frac {\d\omega_{\K}}{\d\sigma_\K}(A_{c\Delta,\Lambda}^+,X,Y,t),\end{align*}
   are all well-defined on $\Delta$ and that
       \begin{align*}
       \sigma_\K(\Delta)K_\K (A_{c\Delta,\Lambda}^+,X,Y,t)&\approx \sigma_\P(\pi_{X,t}(\Delta)) K_\P (\pi_{X,t}(A_{c\Delta,\Lambda}^+),X,t)\\
       &\approx \sigma_\E(\pi_{X}(\Delta)) K_\E (\pi_{X}(A_{c\Delta,\Lambda}^+),X),\end{align*}
       whenever $(X,Y,t)\in \Delta$.

   Given $q$, $1<q<\infty$, we say that
   $K_\E (X):=K_\E (\pi_{X}(A_{c\Delta,\Lambda}^+),X)\in B_q(\pi_X(\Delta),\d\sigma_\E)$ with constant $\Gamma$, $1\leq\Gamma<\infty$, if
      \begin{eqnarray}\label{Bq_E}
\biggl (\barint_{\pi_X(\tilde\Delta)}|K_\E (X)|^q\, \d\sigma_\E(X)\biggr )^{1/q}\leq \Gamma\biggl (\barint_{\pi_X(\tilde\Delta)}|K_\E (X)|\, \d\sigma_\E(X)\biggr ),
\end{eqnarray}
 for all $\tilde\Delta\subset\Delta$. Analogously, $K_\P (X,t):=K_\P (\pi_{X,t}(A_{c\Delta,\Lambda}^+),X,t)\in B_q(\pi_{X,t}(\Delta),\d\sigma_\P)$ and
 $K_\K (X,Y,t):=K_\K (A_{c\Delta,\Lambda}^+,X,Y,t)\in B_q(\Delta,\d\sigma_\K)$ with constant $\Gamma$, if
\begin{equation}\label{Bq}
    \begin{split}
        \biggl (\bariint_{\pi_{X,t}(\tilde\Delta)}|K_\P (X,t)|^q\, \d\sigma_\P(X,t)\biggr )^{1/q}&\leq \Gamma\biggl (\bariint_{\pi_{X,t}(\tilde\Delta)}|K_\P (X,t)|\, \d\sigma_\P(X,t)\biggr ),\mbox{ and}\\
        \biggl (\bariiint_{\tilde\Delta}|K_\K (X,Y,t)|^q\, \d\sigma_\K(X,Y,t)\biggr )^{1/q}&\leq \Gamma\biggl (\bariiint_{\tilde\Delta}|K_\K (X,Y,t)|\, \d\sigma_\K(X,Y,t)\biggr ),
    \end{split}
\end{equation}
respectively, for all $\tilde\Delta\subset\Delta$.

We can now state our second main result.

   \begin{theorem}\label{Ainfty+} Assume \eqref{keyassump}. Let $\omega_{\mathcal{E}}$,  $\omega_{\mathcal{P}}$,  and $\omega_{\mathcal{K}}$ be as in the statement of Theorem \ref{Ainfty}.  Then there exist
 $\Lambda=\Lambda(m,M)$, $1\leq \Lambda<\infty$ and $c=c(m,\kappa,M)$,  $1\leq c<\infty$ such that the following is true. Consider $\Delta:=\Delta_r(X_0,Y_0,t_0)\subset \partial\Omega\times \mathbb R^m\times \mathbb R$. Assume that $\omega_{\E}(\pi_X(A_{c\Delta,\Lambda}^+),\cdot)$ is mutually absolutely continuous on
 $\pi_X(\Delta)$  with respect to $\sigma_\E$ and that the associated Poisson kernel $K_\E (X):=K_\E (\pi_{X}(A_{c\Delta,\Lambda}^+),X)$ satisfies
 $$K_\E \in B_q(\pi_X(\Delta),\d\sigma_\E)$$ for some $q$, $1<q<\infty$, and with constant $\Gamma$, $1\leq\Gamma<\infty$.  Then $\omega_{\P}(\pi_{X,t}(A_{c\Delta,\Lambda}^+),\cdot)$ and $\omega_{\K}(A_{c\Delta,\Lambda}^+,\cdot)$ are mutually absolutely continuous on
 $\pi_{X,t}(\Delta)$  and $\Delta$ with respect to $\sigma_\P$ and $\sigma_\K$, respectively, and the associated Poisson kernels $K_\P (X,t):=K_\P (\pi_{X,t}(A_{c\Delta,\Lambda}^+),X,t)$ and
 $K_\K (X,Y,t):=K_\K (A_{c\Delta,\Lambda}^+,X,Y,t)$ satisfy
 $$K_\P\in B_q(\pi_{X,t}(\Delta),\d\sigma_\P),\ K_\K \in B_q(\Delta,\d\sigma_\K),$$
 with constant $\tilde\Gamma$, $\tilde\Gamma=\tilde\Gamma(m,\kappa,M,\Gamma)$.
\end{theorem}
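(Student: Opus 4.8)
The plan is to reduce the whole statement to the elliptic Poisson kernel $K_\E$ by proving a pointwise comparison $K_\K(X,Y,t)\approx K_\E(X)$ (and $K_\P(X,t)\approx K_\E(X)$), after which the reverse H\"older bound transfers essentially verbatim. The mechanism is the product structure of the surface measures, $\d\sigma_\K=\d\sigma_\E(X)\,\d Y\,\d t$ and $\d\sigma_\P=\d\sigma_\E(X)\,\d t$, together with the following decoupling: by the explicit dilations \eqref{dil.alpha.i} and the group law \eqref{e70}, \eqref{e70++}, for a surface cube $\tilde\Delta=\Delta_{\tilde r}(\tilde X_0,\tilde Y_0,\tilde t_0)\subset\Delta$ the membership condition $(\tilde X_0,\tilde Y_0,\tilde t_0)^{-1}\circ(X,Y,t)\in Q_{\tilde r}$ splits so that $\pi_X(\tilde\Delta)=\Delta^\E_{\tilde r}(\tilde X_0)$ is an honest elliptic surface cube, while for each $X\in\pi_X(\tilde\Delta)$ the $(Y,t)$-section of $\tilde\Delta$ has $(Y,t)$-volume independent of $X$. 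Hence, introducing the cylindrical lift $\d\nu:=K_\E(X)\,\d\sigma_\K$ on $\Delta$, Fubini gives the exact identity
\[
\frac{\nu(\tilde\Delta)}{\sigma_\K(\tilde\Delta)}=\barint_{\pi_X(\tilde\Delta)}K_\E\,\d\sigma_\E=\frac{\omega_\E(\pi_X(A_{c\Delta,\Lambda}^+),\pi_X(\tilde\Delta))}{\sigma_\E(\pi_X(\tilde\Delta))}
\]
for every surface cube $\tilde\Delta\subset\Delta$, and Theorem \ref{Ainfty} then yields $\omega_\K(A_{c\Delta,\Lambda}^+,\tilde\Delta)\approx c_\Delta\,\nu(\tilde\Delta)$ for all such $\tilde\Delta$, with $c_\Delta:=\sigma_\E(\pi_X(\Delta))/\sigma_\K(\Delta)$ a fixed normalizing constant and comparison constants depending only on $m,\kappa,M$; the analogous comparison holds on $\pi_{X,t}(\Delta)$ with $\nu$ replaced by $K_\E(X)\,\d\sigma_\P$.

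Next I would convert ``comparable on all surface cubes'' into ``comparable as measures''. Since $(\Sigma,d)$ is geometrically doubling, every relatively open subset of $\Delta$ admits a Whitney-type decomposition into surface cubes of bounded overlap; combined with outer regularity of the Radon measures $\omega_\K(A_{c\Delta,\Lambda}^+,\cdot)$ and $\nu$, the cube comparison upgrades to $\omega_\K(A_{c\Delta,\Lambda}^+,\cdot)\approx c_\Delta\,\nu$ as measures on $\Delta$. In particular $\omega_\K(A_{c\Delta,\Lambda}^+,\cdot)\ll\nu\ll\sigma_\K$, so $K_\K$ exists, and since $\omega_\E$ and $\sigma_\E$ are mutually absolutely continuous on $\pi_X(\Delta)$ (hence $K_\E>0$ $\sigma_\E$-a.e., so $\nu$ and $\sigma_\K$ are mutually absolutely continuous on $\Delta$), comparing the Radon--Nikodym derivatives in the two-sided bound gives
\[
K_\K(X,Y,t)\approx c_\Delta\,K_\E(X)\qquad\text{for }\sigma_\K\text{-a.e. }(X,Y,t)\in\Delta,
\]
and $\omega_\K(A_{c\Delta,\Lambda}^+,\cdot)$ is mutually absolutely continuous with $\sigma_\K$ on $\Delta$; likewise $K_\P(X,t)\approx c_\Delta'\,K_\E(X)$ $\sigma_\P$-a.e. on $\pi_{X,t}(\Delta)$. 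The constants here depend only on $m,\kappa,M$.

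Finally, the reverse H\"older bound transfers with no further work. For a surface cube $\tilde\Delta\subset\Delta$, using the pointwise comparison, then the Fubini identity above applied to $K_\E(X)^q$ and $K_\E(X)$ on $\tilde\Delta$ (the $(Y,t)$-averaging being trivial), then the hypothesis $K_\E\in B_q(\pi_X(\Delta),\d\sigma_\E)$ with constant $\Gamma$ on the elliptic cube $\pi_X(\tilde\Delta)\subseteq\pi_X(\Delta)$, and then these identities read backwards,
\[
\Bigl(\bariiint_{\tilde\Delta}K_\K^{\,q}\,\d\sigma_\K\Bigr)^{1/q}\approx c_\Delta\Bigl(\barint_{\pi_X(\tilde\Delta)}K_\E^{\,q}\,\d\sigma_\E\Bigr)^{1/q}\le c_\Delta\,\Gamma\barint_{\pi_X(\tilde\Delta)}K_\E\,\d\sigma_\E\approx\Gamma\bariiint_{\tilde\Delta}K_\K\,\d\sigma_\K ,
\]
so $K_\K\in B_q(\Delta,\d\sigma_\K)$ with constant $\tilde\Gamma=\tilde\Gamma(m,\kappa,M,\Gamma)$ (the normalizing constants $c_\Delta$ cancel, the $B_q$ condition being insensitive to a global rescaling of the kernel), and similarly $K_\P\in B_q(\pi_{X,t}(\Delta),\d\sigma_\P)$. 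The step I expect to be the main obstacle is the first one: verifying carefully, from the dilation and group structure, the decoupling that makes surface-cube integrals of functions of $X$ factor exactly through the $X$-projection --- so that $\omega_\K(A_{c\Delta,\Lambda}^+,\cdot)\approx c_\Delta\,\nu$ with constants uniform in $\tilde\Delta$ --- and, relatedly, keeping the logical order straight so that $\omega_\K(A_{c\Delta,\Lambda}^+,\cdot)\ll\sigma_\K$ is established before any Radon--Nikodym derivative is invoked.
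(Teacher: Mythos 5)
Your proof is correct and takes essentially the same route as the paper's: both define the auxiliary product measure ($\nu$ in your notation, $\mu_\K$ in the paper), invoke Theorem \ref{Ainfty} to compare $\omega_\K(A_{c\Delta,\Lambda}^+,\cdot)$ with it on all surface cubes $\tilde\Delta\subset\Delta$, upgrade to mutual absolute continuity by a covering argument (the paper uses Christ dyadic cubes and a Vitali-type estimate where you appeal to a Whitney decomposition and outer regularity), and then read off the $\sigma_\K$-a.e.\ pointwise comparison $K_\K(X,Y,t)\approx c_\Delta K_\E(X)$, from which $B_q$ transfers. The Fubini decoupling you flag as the delicate step is indeed sound --- for $\tilde\Delta=\Delta_{\tilde r}(\tilde X_0,\tilde Y_0,\tilde t_0)$ the group law gives the $(Y,t)$-section at $X$ as $\{(\tilde Y_0+Y'-t'\tilde X_0,\tilde t_0+t')\mid (Y',t')\in \tilde r^3(-1,1)^m\times\tilde r^2(-1,1)\}$, a sheared box of volume $2^{m+1}\tilde r^{3m+2}$ independent of $X$ --- and the paper uses exactly this product structure implicitly when passing between $\mu_\K$ and $\omega_\E$.
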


We also prove the following theorem.
\begin{theorem}\label{DPequiv} Assume \eqref{keyassump}. Let  $p\in (1,\infty)$ be given and let $q$ denote the index dual to $p$. Assume that $\omega_{\K}(A_{c\Delta,\Lambda}^+,\cdot)$ is mutually absolutely continuous on $\Delta$ with respect to  $\sigma_\K$, for all $\Delta:=\Delta_r(X_0,Y_0,t_0)\subset \partial\Omega\times \mathbb R^m\times \mathbb R$. Then the following statements are equivalent.
\begin{align*}
 (i)&\mbox{ $K_\K (A_{c\Delta,\Lambda}^+,\cdot,\cdot,\cdot)\in B_q(\Delta,\d\sigma_\K)$ for all $\Delta\subset \partial\Omega\times \mathbb R^m\times \mathbb R$, with a uniform constant $\Gamma$.}\notag\\
 (ii)&\mbox{ $D_{\K}^p(\partial \Omega\times \mathbb R^m\times \mathbb R,\d\sigma_\K)$ is solvable.}
\end{align*}
Furthermore, if  $D_{\K}^p(\partial \Omega\times \mathbb R^m\times \mathbb R,\d\sigma_\K)$ is solvable then it  is uniquely solvable.
\end{theorem}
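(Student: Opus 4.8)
The plan is to prove Theorem~\ref{DPequiv} by the classical real‑variable equivalence between a reverse‑H\"older (equivalently $A_\infty$) condition on the Poisson kernel and $L^p$ solvability of the Dirichlet problem, as carried out for harmonic measure in \cite{D} and parabolic measure in \cite{FSa} (see also \cite{Ke}), with the elliptic/parabolic potential theory replaced throughout by the boundary theory for $\L_\K$ of \cite{LN}. Concretely I would use: the representation $u_f(X,Y,t)=\iiint f\d\omega_\K(X,Y,t,\cdot)$ and the construction of $\omega_\K$ from Theorem~\ref{EUDP}; interior Harnack and interior/boundary H\"older estimates for weak solutions of $\L_\K u=0$; the doubling of $\omega_\K(A_{c\Delta,\Lambda}^+,\cdot)$, the Carleson‑type estimate, the boundary comparison principle and the change‑of‑pole formula for nonnegative solutions vanishing on a portion of $\partial\Omega\times\mathbb R^m\times\mathbb R$, all from \cite{LN} (cf.\ Section~\ref{sec1.5}); and the fact that $(\Sigma,d,\d\sigma_\K)$ is a space of homogeneous type, so that the Hardy--Littlewood maximal operator $\mathcal M_{\sigma_\K}$ is bounded on $L^r(\sigma_\K)$ for $r>1$ and reverse‑H\"older classes self‑improve there (Gehring's lemma).

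For $(i)\Rightarrow(ii)$ I would first improve $(i)$ by Gehring's lemma to $K_\K(A_{cR,\Lambda}^+,\cdot)\in B_{q_1}(R,\d\sigma_\K)$ for some $q_1>q$, uniformly over surface cubes $R$; writing $p_1:=q_1'<p$, H\"older's inequality then gives the key estimate
\[
\bariiint_{R}|g|\d\omega_\K(A_{cR,\Lambda}^+,\cdot)\lesssim\Bigl(\bariiint_{R}|g|^{p_1}\d\sigma_\K\Bigr)^{1/p_1},
\]
uniformly over $R$ and $g$. Given $f\in C_0(\partial\Omega\times\mathbb R^m\times\mathbb R)$, $f\ge0$, with $u_f$ the bounded solution of Theorem~\ref{EUDP}, I would fix $P_0\in\partial\Omega\times\mathbb R^m\times\mathbb R$ and $(X,Y,t)\in\Gamma^\eta(P_0)$ at ``height'' $\delta$, put $\Delta_j:=\Delta_{2^j\delta}(P_0)$, and split $u_f(X,Y,t)=\iiint f\d\omega_\K(X,Y,t,\cdot)$ into the contribution of $\Delta_1$ and of the annuli $\Delta_{j+1}\setminus\Delta_j$, $j\ge1$. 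The $\Delta_1$‑contribution is handled by a bounded Harnack chain joining $(X,Y,t)$ to $A_{c\Delta_1,\Lambda}^+$ followed by the key estimate; each annular contribution is handled by covering $\Delta_{j+1}\setminus\Delta_j$ by boundedly many cubes $R$ of scale $\approx 2^j\delta$ whose enlargements miss $P_0$, so that $(X,Y,t)$ lies outside the Carleson box over each such $R$, applying the change‑of‑pole formula to replace $\omega_\K(X,Y,t,\cdot)|_R$ by $\omega_\K(A_{cR,\Lambda}^+,\cdot)|_R$ up to the factor $\omega_\K(X,Y,t,R)$, then the key estimate (with $\sigma_\K(R)\approx\sigma_\K(\Delta_{j+1})$), and finally the Carleson decay $\omega_\K(X,Y,t,(\partial\Omega\times\mathbb R^m\times\mathbb R)\setminus\Delta_{j-1})\lesssim 2^{-\alpha j}$. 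Summing the resulting geometric series gives the pointwise bound $N(u_f)(P_0)\lesssim(\mathcal M_{\sigma_\K}(f^{p_1})(P_0))^{1/p_1}$, whence $\|N(u_f)\|_{L^p(\sigma_\K)}\lesssim\|f\|_{L^p(\sigma_\K)}$ because $p/p_1>1$; for general $f\in L^p(\partial\Omega\times\mathbb R^m\times\mathbb R,\d\sigma_\K)$ I would pass to a weak solution $u$ by approximation in $C_0$ and the a priori bound, and verify $u=f$ n.t.\ $\sigma_\K$‑a.e.\ by the usual Fatou‑type argument (pointwise estimate, density of $C_0$, and $\omega_\K\ll\sigma_\K$).

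For $(ii)\Rightarrow(i)$ I would fix $\tilde\Delta\subset\Delta$ and $f\ge0$ with $\spt f\subset\tilde\Delta$. Since $A_{c\tilde\Delta,\Lambda}^+$ lies in $\Gamma^\eta(P)$ for all $P$ in a surface ball of $\sigma_\K$‑measure $\approx\sigma_\K(\tilde\Delta)$, solvability yields $\iiint f\d\omega_\K(A_{c\tilde\Delta,\Lambda}^+,\cdot)=u_f(A_{c\tilde\Delta,\Lambda}^+)\lesssim\sigma_\K(\tilde\Delta)^{-1/p}\|f\|_{L^p(\sigma_\K)}$, and duality on $L^p(\tilde\Delta,\sigma_\K)$ forces $(\bariiint_{\tilde\Delta}|K_\K(A_{c\tilde\Delta,\Lambda}^+,\cdot)|^q\d\sigma_\K)^{1/q}\lesssim\sigma_\K(\tilde\Delta)^{-1}\approx\bariiint_{\tilde\Delta}|K_\K(A_{c\tilde\Delta,\Lambda}^+,\cdot)|\d\sigma_\K$; the Radon--Nikodym form of the change‑of‑pole formula then transfers this bound to the fixed pole $A_{c\Delta,\Lambda}^+$ for every $\tilde\Delta\subset\Delta$, which is exactly $(i)$. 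For the uniqueness statement, if $u_1,u_2$ are weak solutions with the same data $f$ and $N(u_i)\in L^p(\sigma_\K)$, then $w:=u_1-u_2$ is a weak solution with $N(w)\in L^p(\sigma_\K)$ and n.t.\ boundary values $0$ $\sigma_\K$‑a.e., hence $\omega_\K$‑a.e.\ since $\omega_\K\ll\sigma_\K$; a Poisson‑type representation $w(X,Y,t)=\iiint w^{*}\d\omega_\K(X,Y,t,\cdot)$ for its n.t.\ trace $w^{*}$---obtained by exhausting $\Omega\times\mathbb R^m\times\mathbb R$ with interior subdomains, using Theorem~\ref{EUDP}, and passing to the limit, which is legitimate because $N(w)\in L^1_{\mathrm{loc}}(\omega_\K)$ (H\"older together with $K_\K\in B_q\subset L^q_{\mathrm{loc}}(\sigma_\K)$) and because \eqref{compre} makes $\L_\K$ the flat Kolmogorov operator outside a compact set---then gives $w\equiv0$.

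The hard part will not be the real‑variable scheme, which is classical, but making it compatible with the degenerate, anisotropic geometry of $\L_\K$: the reference points $A_{\varrho,\Lambda}^\pm$ carry a shift in the $Y$‑variable induced by the drift $X\cdot\nabla_Y$, so the notions of ``height'', ``Carleson box'' and ``lying above a surface cube'', as well as the precise statements of the comparison principle, the change‑of‑pole formula and the Carleson decay invoked above, must be used exactly in the form established in \cite{LN}; one must in particular verify that the decay $\omega_\K(X,Y,t,(\partial\Omega\times\mathbb R^m\times\mathbb R)\setminus\Delta_j)\lesssim 2^{-\alpha j}$ and the Harnack chains used above hold uniformly in the quasi‑metric $d$, and that $A_{c\Delta,\Lambda}^+$ is genuinely non‑tangentially accessible at scale $\approx\sigma_\K(\Delta)$. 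A secondary difficulty is the unboundedness of $\Omega\times\mathbb R^m\times\mathbb R$---the dyadic sums run to infinity and the representation formulas need a global justification---which is where the normalization \eqref{compre} enters.
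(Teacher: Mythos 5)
Your proof of the equivalence $(i)\Leftrightarrow(ii)$ follows essentially the paper's route: self-improve $B_q$ to $B_{\tilde q}$ on the space of homogeneous type $(\Sigma,d,\sigma_\K)$ via Coifman--Fefferman (equivalently Gehring), split $u_f$ into a near-scale term and dyadic annuli, use the Carleson estimate/boundary H\"older decay and a change-of-pole to obtain the pointwise bound $N(u_f)(P_0)\lesssim(M(|f|^{\tilde p})(P_0))^{1/\tilde p}$, then conclude by the maximal theorem and a density argument with a Fatou-type passage for the n.t.\ trace; and for the converse, test $u_f(A^+_{c\Delta,\Lambda})=\iiint_\Delta K_\K f\d\sigma_\K$ against $f\in C_0(\Delta)$ and dualize. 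You are actually a bit more careful than the paper's Section~\ref{sec3} in one place: in $(ii)\Rightarrow(i)$ the paper derives the $B_q$ inequality only at the top scale $\tilde\Delta=\Delta$ and then declares $K_\K\in B_q(\Delta,\d\sigma_\K)$, whereas you explicitly note that this single-scale estimate for all cubes, together with the change-of-pole in Radon--Nikodym form (Theorem~\ref{thm:quotients}), gives the sub-cube estimates with the fixed pole $A^+_{c\Delta,\Lambda}$; the paper only spells this out later, in Section~\ref{fabes} around \eqref{G3A}--\eqref{Bqrepeatd}.

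The uniqueness part is where you genuinely diverge, and where there is a real gap. You propose the elliptic-style argument: exhaust $\Omega\times\mathbb R^m\times\mathbb R$ by interior subdomains, apply Theorem~\ref{EUDP} in each, and pass to the limit to obtain a Poisson representation $w=\iiint w^*\d\omega_\K$ for $w=u_1-u_2$. This does not go through as sketched. First, Theorem~\ref{EUDP} provides uniqueness (and hence the representation formula) only for \emph{bounded} weak solutions, and $N(w)\in L^p(\sigma_\K)$ does not give boundedness of $w$, either globally or on the boundary of an interior subdomain, so the representation in the approximating domain is not available. Second, the paper explicitly warns that the exhaustion arguments of \cite{Ke} and \cite{KS} do not carry over, citing in particular the time-lag in the Harnack inequality: the Kolmogorov measure $\omega_\K(X,Y,t,\cdot)$ only charges a backward-in-time region, and the one-sided nature of the comparison principle makes the usual ``control $w$ off a boundary set of small measure and shrink it'' scheme delicate. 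The paper instead proves $w(\hat X,\hat Y,\hat t)=0$ directly through the Green identity \eqref{ghh10-}: choosing a cutoff $\theta=\varphi_1\varphi_2$ that cuts at height $\epsilon$ above the boundary and at scale $R$ in the tangential/$(Y,t)$ directions, expanding $w\theta$ via \eqref{ghh10-}, and estimating the three resulting error terms $T_1,T_2,T_3$ over the boundary layer $D_1$, the far shell $D_2$, and the tangential annulus $D_3$; $T_1\to0$ as $\epsilon\to0$ (via $N(w)\in L^p$, $K_\K\in L^q$ and the n.t.\ trace $0$), and $T_2,T_3\to0$ as $R\to\infty$ (via the fundamental-solution bound, Lemma~\ref{lem1en}, the Carleson estimate, Lemma~\ref{greenmeasurerelation}, and again the $B_q$ condition). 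Nothing at the level of your one-paragraph sketch substitutes for this; you would need to carry out this Green-function argument or supply another method that avoids the boundedness/time-lag obstructions.
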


\section{Local regularity and boundary estimates}\label{sec1.5}

In this section we state a number of the lemmas concerning the interior regularity of weak solution and the boundary behaviour of non-negative solutions. The boundary estimates  are proven in \cite{LN} for the more general operators stated in \eqref{e-kolm-ndre}, assuming \eqref{eq2re}, \eqref{compre} and \eqref{eq2+re}. Concerning geometry, in \cite{LN} we consider unbounded domains $\tilde\Omega\subset\mathbb R^{N+1}$ of the form
\begin{eqnarray}\label{dom-re}
 \tilde\Omega=\{(X,Y,t)=(x,x_{m},y,y_{m},t)\in\mathbb R^{N+1} \mid \ x_m>\tilde\psi(x,y,y_m,t)\},
    \end{eqnarray}
    imposing restrictions on $\tilde\psi$ of Lipschitz character accounting for the underlying non-Euclidean group structure. In particular, we also allowed for $(Y,t)$-dependent domains. Up to a point, the results in \cite{LN} are established allowing $A=A(X,Y,t)$ and $\tilde\psi=\tilde\psi(x,y,y_m,t)$ to depend on all variables with $y_m$ included. However, the more refined results established are derived assuming in addition that $A$ as well as $\psi$ are independent of  the variable $y_m$. The reason for this is discussed in detail in \cite{LN}. Obviously, the operators $\L_\K$ considered in this paper are,  as  $A=A(X)$, special cases of the more general operators of Kolmogorov type considered in \cite{LN}. Also, the geometric setting of \cite{LN} is more demanding compared to the domains considered in this paper, as $\Omega\times\mathbb R^m\times\mathbb R$ is a special case of the domains in \eqref{dom-re}.

Below we  formulate
 the necessary auxiliary and boundary type estimate results, needed in our proofs and in particular in the proofs of Theorem \ref{Ainfty}, Theorem \ref{Ainfty+} and Theorem \ref{DPequiv}, in the context of $\L_\K$ as these results follow from \cite{LN}. For the corresponding results for $\L_\E$ and $\L_\P$ we refer to \cite{Ke} and \cite{FS, FSY, FGS, N}, respectively.

 \subsection{Energy estimates and local regularity}

 Consider $(X_0,Y_0,t_0)\subset \mathbb R^{N+1}$. In the following we will frequently use the notation $Q_{\rho}:=Q_{\rho}(X_0,Y_0,t_0)$ for $\rho>0$.

 \begin{lemma}\label{lem1en} Assume that $u$ is a weak solution to $\L_\K u=0$ in $Q_{2r}=Q_{2r}(X_0,Y_0,t_0)\subset \mathbb R^{N+1}$.  Then
\begin{eqnarray*}
 \iiint_{Q_{r}}\ |\nabla_Xu|^2\, \d X\d Y \d t\lesssim \frac{1}{r^2} \iiint_{Q_{2r}}\ |u|^2\, \d X\d Y \d t.
\end{eqnarray*}
\end{lemma}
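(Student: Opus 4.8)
The plan is to prove this Caccioppoli-type energy estimate by the standard test-function argument adapted to the Kolmogorov operator $\L_\K$. First I would fix a cutoff function $\varphi\in C_0^\infty(Q_{2r})$ with $0\le\varphi\le 1$, $\varphi\equiv 1$ on $Q_r$, and $|\nabla_X\varphi|\lesssim 1/r$ (here one must of course check that $\varphi$ can be chosen with gradient bounds adapted to the anisotropic dilations \eqref{dil.alpha.i}, but since we only differentiate in $X$ the bound $|\nabla_X\varphi|\lesssim 1/r$ is the natural one and suffices). The candidate test function is $\phi:=\varphi^2 u$, which lies in the correct space $L^2_{Y,t}(U_Y\times J, H^1_{X,0}(U_X))$ on the relevant subcylinder, so it is admissible in the weak formulation \eqref{weak3}.

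Next I would insert $\phi=\varphi^2 u$ into \eqref{weak3}. The first (divergence) term produces
\[
\iiint A(X)\nabla_X u\cdot\nabla_X(\varphi^2 u)\,\d X\d Y\d t
=\iiint \varphi^2 A(X)\nabla_X u\cdot\nabla_X u\,\d X\d Y\d t
+2\iiint \varphi u\, A(X)\nabla_X u\cdot\nabla_X\varphi\,\d X\d Y\d t,
\]
and ellipticity \eqref{eq:Aellip} bounds the first piece below by $\kappa^{-1}\iiint\varphi^2|\nabla_X u|^2$, while the boundedness of $A$ and Cauchy--Schwarz (with Young's inequality, absorbing) control the cross term by $\tfrac12\kappa^{-1}\iiint\varphi^2|\nabla_X u|^2 + c\iiint u^2|\nabla_X\varphi|^2$. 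The key point is the second (drift/transport) term: one has to show
\[
\iint\langle(-X\cdot\nabla_Y+\partial_t)u(\cdot,Y,t),\varphi^2 u(\cdot,Y,t)\rangle\,\d Y\d t \ge -\,(\text{harmless terms}),
\]
and in fact the natural identity is that $(-X\cdot\nabla_Y+\partial_t)$ is, formally, a directional derivative along the vector field $\partial_t - X\cdot\nabla_Y$ which annihilates $\varphi$ if $\varphi$ is chosen invariant under the group flow, or more robustly: pairing $(-X\cdot\nabla_Y+\partial_t)u$ with $\varphi^2 u$ gives $\tfrac12(-X\cdot\nabla_Y+\partial_t)(\varphi^2 u^2)$ up to the term $-\tfrac12 u^2(-X\cdot\nabla_Y+\partial_t)(\varphi^2)$. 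The first of these integrates to a boundary term which vanishes because $\varphi$ has compact support, and the second is bounded by $c\iiint u^2\,|\,(-X\cdot\nabla_Y+\partial_t)\varphi^2|$.

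The main obstacle is precisely this last estimate: the transport term $(-X\cdot\nabla_Y+\partial_t)\varphi^2$ involves $\partial_Y\varphi$, which under the parabolic scaling \eqref{dil.alpha.i} is of order $r^{-3}$, and it is multiplied by $|X|\lesssim r$ inside $Q_{2r}$, giving a total order $r^{-2}$ — consistent with the claimed $r^{-2}\iiint_{Q_{2r}}|u|^2$ bound. Carrying this out rigorously requires either a density/approximation argument to justify the integration by parts in the $(Y,t)$ variables against the $H_X^{-1}$--$H_{X,0}^1$ duality pairing (since $u$ is only a weak solution in the sense of Definition \ref{weaklip}), or invoking the Steklov-averaging / Friedrichs-mollification machinery in the $X$ variable that is standard for such divergence-form hypoelliptic equations; I expect this justification — rather than any single inequality — to be where the real care is needed. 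Once the transport term is controlled, collecting all pieces, absorbing the good term $\tfrac12\kappa^{-1}\iiint\varphi^2|\nabla_X u|^2$ to the left, and using $\varphi\equiv 1$ on $Q_r$ together with $|\nabla_X\varphi|\lesssim 1/r$ and $|(-X\cdot\nabla_Y+\partial_t)\varphi^2|\lesssim r^{-2}$ yields the assertion with an implicit constant depending only on $m$ and $\kappa$.
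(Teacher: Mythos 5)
Your proposal is the standard Caccioppoli-type argument with the test function $\varphi^2 u$, and it is essentially what the paper has in mind (the paper's ``proof'' is a single-sentence pointer to \cite{LN}, which carries out exactly this energy estimate). The decomposition of the divergence term via ellipticity and Young, and the identity $\varphi^2 u\,(-X\cdot\nabla_Y+\partial_t)u=\tfrac12(-X\cdot\nabla_Y+\partial_t)(\varphi^2 u^2)-\tfrac12 u^2(-X\cdot\nabla_Y+\partial_t)\varphi^2$ with the total-derivative term integrating to zero (the vector field is divergence-free and $\varphi$ has compact support), are all correct.

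One point deserves to be stated more carefully than ``$|X|\lesssim r$ inside $Q_{2r}$'': that bound only holds for $Q_{2r}$ centred at the origin. For $Q_{2r}(X_0,Y_0,t_0)$ one has $|X-X_0|\lesssim r$, while $|X|$ can be arbitrarily large. The correct fix, which you gesture at, is to take the cutoff adapted to the group left-translation, $\varphi(X,Y,t):=\tilde\varphi\bigl((X_0,Y_0,t_0)^{-1}\circ(X,Y,t)\bigr)$ with $\tilde\varphi$ a standard cutoff for $Q_{2r}$ at the origin with $|\nabla_X\tilde\varphi|\lesssim r^{-1}$, $|\nabla_{\tilde Y}\tilde\varphi|\lesssim r^{-3}$, $|\partial_{\tilde t}\tilde\varphi|\lesssim r^{-2}$. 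Since $(X_0,Y_0,t_0)^{-1}\circ(X,Y,t)=(X-X_0,\,Y-Y_0+(t-t_0)X_0,\,t-t_0)$, the chain rule gives $(X\cdot\nabla_Y-\partial_t)\varphi=(X-X_0)\cdot\nabla_{\tilde Y}\tilde\varphi+\partial_{\tilde t}\tilde\varphi$, so the $X_0$-contribution cancels exactly and $|(X\cdot\nabla_Y-\partial_t)\varphi|\lesssim r\cdot r^{-3}+r^{-2}\lesssim r^{-2}$ uniformly in $(X_0,Y_0,t_0)$, as required. Your remark about Steklov averaging/regularisation to justify the integration by parts in the $H_X^{-1}$--$H_{X,0}^1$ pairing for a weak solution is the right rigorisation step; with that and the group-adapted cutoff the proof is complete.
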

\begin{proof}
This is an energy estimate that can be proven using standard arguments. We refer to \cite{LN} for further details.
\end{proof}

The following two lemmas are proved in \cite{Ietal}.
 \begin{lemma}\label{lem1} Assume that $u$ is a weak solution to $\L_\K u=0$ in $Q_{2r}(X_0,Y_0,t_0)\subset \mathbb R^{N+1}$.  Given $p\in [1,\infty)$ there exists a constant $c=c(m,\kappa,p)$,
$1\leq c<\infty$, such that
\begin{eqnarray}
\sup_{Q_r}\ |u|\leq c\biggl (\bariiint_{Q_{2r}}\ |u|^p\, \d X\d Y \d t\biggr )^{1/p}.
\end{eqnarray}
\end{lemma}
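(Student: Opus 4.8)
The plan is to reduce the statement to the case $p=2$ and then prove the resulting $L^2$--$L^\infty$ bound by a Moser iteration adapted to the group law \eqref{e70}, the dilations $\delta_r$, and the hypoellipticity of $\L_\K$. For $p\ge 2$ there is nothing to do: since $\bariiint_{Q_{2r}}$ is an average, Jensen's inequality gives $(\bariiint_{Q_{2r}}|u|^2)^{1/2}\le(\bariiint_{Q_{2r}}|u|^p)^{1/p}$, so the case $p=2$ already covers $p\ge 2$. For $1\le p<2$ one interpolates. Suppose the $L^2$--$L^\infty$ bound is known in the quantitative form $\sup_{Q_\tau}|u|\le c\,r^{\gamma}(\tau'-\tau)^{-\gamma}\bigl(\bariiint_{Q_{\tau'}}|u|^2\bigr)^{1/2}$ for $r\le\tau<\tau'\le 2r$ and some $\gamma=\gamma(m)$; then, using $\iiint_{Q_{\tau'}}|u|^2\le\bigl(\sup_{Q_{\tau'}}|u|\bigr)^{2-p}\iiint_{Q_{\tau'}}|u|^p$ together with Young's inequality, one obtains, for every $\varepsilon>0$,
\[
\sup_{Q_\tau}|u|\le\varepsilon\sup_{Q_{\tau'}}|u|+C_\varepsilon\,r^{\gamma'}(\tau'-\tau)^{-\gamma'}\Bigl(\bariiint_{Q_{2r}}|u|^p\Bigr)^{1/p},
\]
with $\gamma'=2\gamma/p$, and a standard absorption/iteration lemma (Giaquinta--Giusti) applied to $\tau\mapsto\sup_{Q_\tau}|u|$ on $[r,2r]$ removes the first term on the right and yields the claimed inequality.

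It thus suffices to establish $\sup_{Q_r}|u|\lesssim\bigl(\bariiint_{Q_{2r}}|u|^2\bigr)^{1/2}$. By left translation in the group and the dilation $\delta_r$ --- which maps a weak solution of $\L_\K u=0$ with coefficients $A(X)$ to a weak solution of an equation of the same type with coefficients $A(rX)$, still satisfying \eqref{eq:Aellip} with the same $\kappa$ --- one may take $(X_0,Y_0,t_0)=0$ and work on a decreasing family of cubes $Q_{\rho_k}$, $\rho_k\downarrow 1$, with $\rho_0$ slightly below $2$. On each step one tests \eqref{weak3} with $\phi=\zeta^2\,u\,|u|^{2(\beta-1)}$ (truncated at finite levels so that $\phi\in L^2_{Y,t}H^1_{X,0}$), where $\zeta$ is a cutoff equal to $1$ on $Q_{\rho_{k+1}}$ and supported in $Q_{\rho_k}$, with $|\nabla_X\zeta|+|\nabla_Y\zeta|+|\partial_t\zeta|\lesssim(\rho_k-\rho_{k+1})^{-1}$ on $Q_{\rho_0}$. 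Using \eqref{eq:Aellip} to bound the elliptic form from below, and using that the first-order transport operator $X\cdot\nabla_Y-\partial_t$ is divergence free in the $(Y,t)$-variables --- so that, after the chain rule, the term $\langle(-X\cdot\nabla_Y+\partial_t)u,\phi\rangle$ in the weak formulation equals $-\tfrac1{2\beta}\iiint|u|^{2\beta}\,(-X\cdot\nabla_Y+\partial_t)(\zeta^2)$ --- one arrives, the powers of $\beta$ cancelling as in the classical case, at a Caccioppoli inequality
\[
\iiint_{Q_{\rho_{k+1}}}\bigl|\nabla_X(|u|^{\beta})\bigr|^2\,\d X\d Y\d t\;\lesssim\;\frac{1}{(\rho_k-\rho_{k+1})^{2}}\iiint_{Q_{\rho_k}}|u|^{2\beta}\,\d X\d Y\d t ,
\]
which is the analogue of Lemma \ref{lem1en} for the functions $|u|^{\beta}$.

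The one genuinely nonclassical ingredient --- and the main obstacle --- is the replacement of the Sobolev inequality: because $\L_\K$ degenerates in the $(Y,t)$-directions, an $L^2_{Y,t}H^1_X$ bound on $w:=|u|^{\beta}$ carries, by itself, no gain of integrability in the full set of variables. Here one must use hypoellipticity. From the equation $\L_\K u=0$ one checks that $w$ satisfies, in the distributional sense, $(-X\cdot\nabla_Y+\partial_t)w=\nabla_X\cdot(A\nabla_X w)+g$ with $g\le 0$ when $\beta\ge 1$ and $\nabla_X w\in L^2_{\mathrm{loc}}$; this is exactly the structure of a kinetic (Fokker--Planck) subsolution, and the velocity-averaging / kinetic Sobolev estimate established in \cite{Ietal} then upgrades $w$ from $L^2$ to $L^{2\chi}_{\mathrm{loc}}$ for a fixed $\chi=\chi(m)>1$, with a quantitative bound on a slightly smaller cube in terms of the right-hand side of the Caccioppoli inequality. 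Feeding this back with $\beta=\chi^{k}$, choosing the radii $\rho_k$ so that $\sum_k(\rho_k-\rho_{k+1})<\infty$, and noting that the constant $c_k$ produced at step $k$ grows at most polynomially in $\chi^{k}$ so that $\prod_k c_k^{\,\chi^{-k}}$ converges, one obtains $\sup_{Q_{1}}|u|\lesssim\bigl(\iiint_{Q_{\rho_0}}|u|^{2}\bigr)^{1/2}$; a final rescaling between the pairs $(1,\rho_0)$ and $(r,2r)$ gives the case $p=2$. I expect all the real difficulty to be concentrated in making this gain of integrability quantitative and uniform along the iteration --- precisely the content of the De Giorgi--Nash--Moser theory for $\L_\K$ developed in \cite{Ietal} --- the remaining steps being the standard Moser machinery.
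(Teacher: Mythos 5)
The paper gives no proof of this lemma at all; it states that it (together with Lemma \ref{lem1+}) is \emph{proved in \cite{Ietal}}, and leaves it at that. What can be assessed is whether your sketch is a correct account of the argument behind that citation, and it essentially is. The reduction from general $p\in[1,\infty)$ to $p=2$ (Jensen for $p\ge2$; H{\"o}lder, Young, and Giaquinta--Giusti absorption for $1\le p<2$) is standard and unaffected by the degenerate structure. The Caccioppoli step is right: testing \eqref{weak3} with $\phi=\zeta^2 u|u|^{2(\beta-1)}$, the transport term reduces to $-\tfrac{1}{2\beta}\iiint|u|^{2\beta}(-X\cdot\nabla_Y+\partial_t)(\zeta^2)$ because $X\cdot\nabla_Y-\partial_t$ is divergence-free in $(Y,t)$, and the $1/\beta$ prefactor is harmless. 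You have also correctly isolated the genuinely nonclassical ingredient --- the gain of integrability from $L^2_{Y,t}H^1_X$ to $L^{2\chi}$ via a velocity-averaging (kinetic Sobolev) lemma rather than a classical Sobolev embedding --- and correctly observed that $w=|u|^\beta$ with $\beta\ge1$ is a kinetic subsolution, so that the averaging lemma of \cite{Ietal} applies uniformly along the iterates, with the constant growing only polynomially in $\beta$, so $\prod_k c_k^{\chi^{-k}}$ converges. The one real difference between your sketch and \cite{Ietal} is cosmetic: that paper runs a De Giorgi truncation scheme (levels $(u-k)_+$) rather than a Moser power iteration; the two are interchangeable once the averaging lemma is in hand, and the power version you write is arguably cleaner for deducing the $L^p$ statement. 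A minor imprecision worth flagging: on the intrinsic cubes $Q_\rho$ the cutoff must be adapted to the dilations $\delta_\rho$, so $|\nabla_X\zeta|\lesssim(\rho_k-\rho_{k+1})^{-1}$, $|\partial_t\zeta|\lesssim(\rho_k-\rho_{k+1})^{-2}$, $|\nabla_Y\zeta|\lesssim(\rho_k-\rho_{k+1})^{-3}$, not all $(\rho_k-\rho_{k+1})^{-1}$ as you wrote; this does not affect the structure (your stated final Caccioppoli exponent $(\rho_k-\rho_{k+1})^{-2}$ is the correct scale of $(X\cdot\nabla_Y-\partial_t)(\zeta^2)$).
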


 \begin{lemma}\label{lem1+} Assume that $u$ is a weak solution to $\L_\K u=0$ in $Q_{2r}(X_0,Y_0,t_0)\subset \mathbb R^{N+1}$.  Then there exists $\alpha
=\alpha (m,\kappa)\in (0,1)$, such that
\begin{equation}
|u(X,Y,t)-u(\tilde X,\tilde Y,\tilde t)|\lesssim\biggl (\frac{d((X,Y,t),(\tilde X,\tilde Y,\tilde t))}{r}\biggr )^{\alpha
}\sup_{Q_{2r} }|u|
\end{equation}%
whenever $(X,Y,t), (\tilde X,\tilde Y,\tilde t)\in  Q_{r}(X_0,Y_0,t_0)$.
\end{lemma}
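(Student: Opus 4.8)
The plan is to deduce the H\"older bound from an oscillation-decay estimate, whose proof rests on the scale- and translation-invariant Harnack inequality for $\L_\K$ from \cite{Ietal}. First I would normalize: since the class of operators $\L_\K$ is closed under the dilations $\delta_r$ of \eqref{dil.alpha.i} and under the left translations of \eqref{e70}, replacing $u$ by $v(X,Y,t):=u\bigl((X_0,Y_0,t_0)\circ\delta_r(X,Y,t)\bigr)$ (and $A$ by the transformed matrix, which still satisfies \eqref{eq:Aellip} with the same $\kappa$) reduces matters to $r=1$ and $(X_0,Y_0,t_0)=(0,0,0)$; note that $(X_0,Y_0,t_0)\circ\delta_r(Q_2)$ is exactly $Q_{2r}(X_0,Y_0,t_0)$ by the definition of $Q_\rho$, and likewise for $Q_1$. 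So it is enough to show that a weak solution $u$ of $\L_\K u=0$ in $Q_2:=Q_2(0,0,0)$ with $\sup_{Q_2}|u|\le1$ satisfies $|u(X,Y,t)-u(\tilde X,\tilde Y,\tilde t)|\lesssim d\bigl((X,Y,t),(\tilde X,\tilde Y,\tilde t)\bigr)^{\alpha}$ whenever $(X,Y,t),(\tilde X,\tilde Y,\tilde t)\in Q_1$.

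The heart of the argument is an oscillation-decay estimate: there exist $\theta\in(0,1)$ and a dilation ratio $\lambda\in(0,1)$, depending only on $m$ and $\kappa$, such that for every $(X_1,Y_1,t_1)$ and every $\rho>0$ for which the relevant Harnack cylinders of size $\rho$ centered at $(X_1,Y_1,t_1)$ lie in $Q_2$,
\begin{equation*}
\osc_{Q_{\lambda\rho}(X_1,Y_1,t_1)}u\ \le\ \theta\,\osc_{Q_{2\rho}(X_1,Y_1,t_1)}u .
\end{equation*}
To obtain this I would set $M:=\sup_{Q_{2\rho}(X_1,Y_1,t_1)}u$, $m:=\inf_{Q_{2\rho}(X_1,Y_1,t_1)}u$, and apply the (weak) Harnack inequality of \cite{Ietal} to the non-negative weak solutions $M-u$ and $u-m$, whose sum is the constant $M-m$. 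One of them, say $v$, has $L^1$-average at least $(M-m)/2$ over a suitable sub-cylinder, and the Harnack inequality --- with its built-in time lag, encoded through reference points of the type $A_{\varrho,\Lambda}^{\pm}$ of \eqref{pointsref2} --- propagates this to a pointwise lower bound $v\gtrsim M-m$ on $Q_{\lambda\rho}(X_1,Y_1,t_1)$ for $\lambda$ small enough, i.e.\ to the asserted contraction. Equivalently, one may run De Giorgi's iteration directly, using local boundedness (Lemma \ref{lem1}), the energy estimate (Lemma \ref{lem1en}), and the intermediate-value / measure-shrinking lemmas of \cite{Ietal}, all specialized to $\L_\K$.

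Granting the oscillation-decay estimate, the rest is routine iteration. Fixing $(X_1,Y_1,t_1)\in Q_1$ and iterating over the geometric sequence of scales $\rho_k=\lambda^{k}\rho_0$ (with $\rho_0\approx1$), one gets $\osc_{Q_{\rho_k}(X_1,Y_1,t_1)}u\lesssim\theta^{k}\lesssim\rho_k^{\alpha}$ with $\alpha:=\log(1/\theta)/\log(1/\lambda)\in(0,1)$, which is H\"older regularity at $(X_1,Y_1,t_1)$ with a constant depending only on $m,\kappa$. Chaining two such pointwise estimates by the pseudo-triangle inequality \eqref{e-triangular}, and using \eqref{e-ps.dist} and \eqref{e-triangularap} to pass freely between the two one-sided quasi-norms $\|(\tilde X,\tilde Y,\tilde t)^{-1}\circ(X,Y,t)\|$ and $\|(X,Y,t)^{-1}\circ(\tilde X,\tilde Y,\tilde t)\|$, upgrades this to the two-point bound on $Q_1$; finally, undoing the normalization and the dilation restores the scale $r$ and the factor $\sup_{Q_{2r}}|u|$ in the statement.

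The main obstacle is entirely in the oscillation-decay step, equivalently in the Harnack (or weak Harnack) inequality for $\L_\K$. Because $\L_\K$ carries no diffusion in the $Y$-variable --- the term $X\cdot\nabla_Y$ only transports --- the classical De Giorgi--Nash--Moser machinery does not apply off the shelf, and establishing the correct scale- and translation-invariant Harnack inequality (with asymmetric time intervals, a genuine waiting time, and the reference points $A_{\varrho,\Lambda}^{\pm}$) is exactly the nontrivial input supplied by \cite{Ietal}. Two smaller technical points also need attention: the non-commutativity of the group law $\circ$ forces one to track left translations carefully and to invoke \eqref{e-triangularap} when comparing one-sided distances; and when $(X_1,Y_1,t_1)$ lies near the future-time portion of $Q_1$ one must choose $\lambda$ small enough that the Harnack cylinders still fit inside $Q_2$.
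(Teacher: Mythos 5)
The paper gives no proof of Lemma~\ref{lem1+}; it simply cites \cite{Ietal}, where this interior H\"older estimate is established alongside the Harnack inequality of Lemma~\ref{harnack}. Your sketch --- rescale via the dilations $\delta_r$ and left translations $\circ$ (legitimate here since $A=A(X)$ keeps the same ellipticity constant $\kappa$ under these changes of variables), derive geometric oscillation decay by applying the Harnack inequality to the two nonnegative solutions $\sup u-u$ and $u-\inf u$, iterate, and chain with the quasi-triangle inequality \eqref{e-triangular} --- is correct and is in substance the standard Harnack-to-H\"older derivation carried out in \cite{Ietal}. One point worth tightening: Lemma~\ref{harnack} lives entirely in the past cylinder $Q^{-}_{\rho}$ and compares a slab strictly in the past (times in $[t_0-\gamma\rho^2,\,t_0-\beta\rho^2]$) with a slab abutting the top time level (times in $[t_0-\alpha\rho^2,\,t_0]$); there is a genuine waiting time. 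Consequently the oscillation-decay inequality falls out most naturally for backward cylinders $Q^{-}_{\lambda\rho}\subset Q^{-}_{\rho}$ sharing a common top time, not for the time-symmetric concentric cubes $Q_{\lambda\rho}(X_1,Y_1,t_1)\subset Q_{2\rho}(X_1,Y_1,t_1)$ as you wrote it. The concentric form is still correct, but to prove it one must shift the Harnack cylinder upward so that its top time is $t_1+\lambda^2\rho^2$ rather than $t_1$, so that $Q_{\lambda\rho}(X_1,Y_1,t_1)$ sits inside the $\widetilde Q^{+}$-slab; correspondingly, in the chaining step the decaying cylinders should be centered at whichever of the two points has the larger time coordinate, with the other point lying in its past. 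These adjustments are routine and do not affect the conclusion that the exponent $\alpha$ and the implicit constant depend only on $m$ and $\kappa$, as claimed. In short, the paper's ``proof'' is a citation while yours re-derives the cited result; the derivation is sound, but it reuses the deepest ingredient (the Harnack inequality, itself proved in \cite{Ietal} by a De Giorgi-type scheme), so citing would have been the economical choice.
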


To state the Harnack inequality we introduce
\begin{eqnarray}\label{pastcyl}
 Q_r^-(X_0,Y_0,t_0):=Q_r(X_0,Y_0,t_0)\cap\{(X,Y,t)\mid t_0-r^2<t<t_0\}.
\end{eqnarray}
The following Harnack inequality is proved in \cite{Ietal}.
\begin{lemma}\label{harnack} {There exist constants $c=c(m,\kappa)>1$ and $\alpha, \beta, \gamma, \theta \in (0,1)$, with
$0 < \alpha < \beta < \gamma < \theta^2$, such that the following is true. Assume that $u$ is a non-negative weak solution to
$\L_\K u=0$ in $Q_r^-(X_0,Y_0,t_0)\subset\mathbb R^{N+1}$. Then,
$$
    \sup_{\widetilde Q^-_{r}(X_0,Y_0,t_0)} u \leq c\inf_{\widetilde Q^+_{r}(X_0,Y_0,t_0)} u,
$$
}
where
\begin{equation*}
\begin{split}
       \widetilde Q^+_{r}(X_0,Y_0,t_0) & = \big\{ (X,Y,t) \in Q_{\theta r}^-(X_0,Y_0,t_0) \mid t_0 - \alpha r^2 \le t \le t_0
\big\},\\
   \widetilde Q^-_{r}(X_0,Y_0,t_0) & = \big\{ (X,Y,t) \in Q_{\theta r}^-(X_0,Y_0,t_0) \mid t_0 - \gamma r^2 \le t \le t_0 -\beta
r^2  \big\}.
\end{split}
\end{equation*}
\end{lemma}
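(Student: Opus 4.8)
Since Lemma~\ref{harnack} is established in \cite{Ietal}, the plan is to recall the De Giorgi--Nash--Moser scheme adapted to the Kolmogorov operator $\L_\K$, which is carried out there. Using the dilations \eqref{dil.alpha.i} and the left translations of the group law \eqref{e70}, both of which preserve the class of operators, I would first normalise to $r=1$ and $(X_0,Y_0,t_0)=0$, so that everything takes place in fixed unit cylinders.

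The first half is a local boundedness estimate for non-negative subsolutions, i.e.\ Lemma~\ref{lem1}. Starting from the Caccioppoli-type energy inequality of Lemma~\ref{lem1en}, the essential difficulty is that the only coercive directions are the $X$-directions: one gains Sobolev integrability in $X$ but has no control in $(Y,t)$. To run a Moser iteration one must feed the extra $X$-integrability back into the $(Y,t)$-variables, and this is exactly where the hypoelliptic structure enters, since the commutator of $\nabla_X$ with the transport field $X\cdot\nabla_Y-\partial_t$ recovers $\nabla_Y$. Concretely I would invoke a kinetic-type averaging/regularisation estimate (equivalently, a Sobolev inequality adapted to the metric $d$), obtained by combining the energy bound with the equation $(X\cdot\nabla_Y-\partial_t)u=\nabla_X\cdot(A\nabla_Xu)$ to gain fractional derivatives in $(Y,t)$, and then iterate the resulting reverse H{\"o}lder inequality over the shrinking cylinders $Q_\rho$. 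The oscillation decay in Lemma~\ref{lem1+} then follows from a De Giorgi-type level-set argument, again transplanted to the $\delta_r$-homogeneous geometry.

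The second, and harder, half is the lower bound: a weak Harnack inequality for non-negative supersolutions controlling $\inf u$ on a past cylinder by an $L^{\delta_0}$-average over a larger past cylinder, for some small $\delta_0>0$. I would (i) derive a logarithmic estimate for $\log u$ which, via a Bombieri--Giusti-type lemma on the space of homogeneous type $(\R^{N+1},d)$, upgrades to an $L^{\delta_0}$ bound on a subcylinder; and (ii) carry out an expansion-of-positivity argument: if $u\ge 1$ on a set of positive measure at some time level, the $X$-diffusion spreads positivity in $X$, and then the transport term $X\cdot\nabla_Y-\partial_t$ carries it forward along characteristics into a full neighbourhood in $Y$ as well. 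This spreading is one-directional in time and consumes a definite amount of time, which is precisely why the supremum set $\widetilde Q^-_r$ must sit strictly in the past and be separated from $\widetilde Q^+_r$; the ordering $0<\alpha<\beta<\gamma<\theta^2$ of the time cut-offs records the time needed for positivity to propagate, with $\widetilde Q^+_r$ contained in the region it reaches. Chaining the local sup-bound for subsolutions with this weak Harnack inequality along a finite Harnack chain --- a $\delta_r$- and $\circ$-adapted covering of $\widetilde Q^-_r\cup\widetilde Q^+_r$ --- then gives $\sup_{\widetilde Q^-_r}u\le c\inf_{\widetilde Q^+_r}u$.

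The main obstacle throughout is the genuine degeneracy of $\L_\K$: there is no ellipticity and no Sobolev embedding in the $(Y,t)$-directions, so every step --- the integrability gain, the log-estimate, and especially the expansion of positivity --- must be driven by the hypoelliptic commutator rather than by classical parabolic tools, and all the estimates have to be organised with respect to the dilations \eqref{dil.alpha.i} and group law \eqref{e70} in place of Euclidean balls. Making the expansion of positivity quantitative and scale invariant in this non-Euclidean setting is the technical heart of the argument, and it is exactly what forces the asymmetric geometry of $\widetilde Q^\pm_r$ in the statement; the details are in \cite{Ietal}.
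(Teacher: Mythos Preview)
Your proposal is correct and consistent with the paper's treatment: the paper does not give its own proof of Lemma~\ref{harnack} but simply records that it is established in \cite{Ietal}, and what you have written is an accurate high-level outline of the De Giorgi--Nash--Moser strategy carried out there. In particular your identification of the key ingredients --- the kinetic averaging/gain-of-integrability step to compensate for the lack of coercivity in $(Y,t)$, the expansion of positivity driven by the transport term, and the resulting one-sided time ordering $0<\alpha<\beta<\gamma<\theta^2$ --- matches the structure of the argument in \cite{Ietal}.
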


\begin{remark}
Note that the constants $\alpha, \beta, \gamma, \theta$ appearing in Lemma \ref{harnack} can not be chosen arbitrarily.
\end{remark}

 \subsection{Estimates for (non-negative) solutions}

 We refer to \cite{LN} for the proofs of the following results.

\begin{lemma}\label{lem4.5-Kyoto1}
Assume \eqref{keyassump}. Let $(X_0,Y_0,t_0)\in\partial\Omega\times\mathbb R^m\times\mathbb R$ and $r>0$. Let $u$ be a weak  solution of $\L_\K u=0$ in $(\Omega\times\mathbb R^m\times\mathbb R)\cap Q_{2r}(X_0,Y_0,t_0) $, vanishing continuously on  $(\partial\Omega\times\mathbb R^m\times\mathbb R)\cap Q_{2r}(X_0,Y_0,t_0) $. Then,
there exists $\alpha
=\alpha (m,\kappa,M)\in (0,1)$, such that
\begin{equation}
u(X,Y,t)\lesssim\biggl (\frac{d((X,Y,t),(X_0,Y_0,t_0))}{r}\biggr )^{\alpha
}\sup_{(\Omega\times\mathbb R^m\times\mathbb R)\cap Q_{2r}(X_0,Y_0,t_0) }u
\end{equation}%
whenever $(X,Y,t)\in (\Omega\times\mathbb R^m\times\mathbb R)\cap Q_{r/c}(X_0,Y_0,t_0)$.
\end{lemma}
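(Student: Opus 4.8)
The statement to be proven (Lemma \ref{lem4.5-Kyoto1}) is a scale-invariant H\"older-type decay estimate at the boundary for a weak solution of $\L_\K u=0$ in $(\Omega\times\mathbb R^m\times\mathbb R)\cap Q_{2r}(X_0,Y_0,t_0)$ vanishing continuously on the lateral boundary. The plan is to reduce the boundary estimate to the interior H\"older estimate of Lemma \ref{lem1+} via a reflection/extension argument combined with an oscillation-decay iteration. First I would normalize: by the dilation structure \eqref{dil.alpha.i} and the group translation \eqref{e70}, it suffices to treat $r=1$ and $(X_0,Y_0,t_0)=(0,0,0)$, since the class of operators $\L_\K$ is closed under these transformations and the quantities in the statement transform correctly. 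Write $M_r:=\sup_{(\Omega\times\mathbb R^m\times\mathbb R)\cap Q_{2r}}u$; the goal is a geometric decay $\operatorname{osc}$-type bound.

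The key step is to establish a one-step oscillation decay: there is $\theta\in(0,1)$ and $\lambda\in(0,1)$, depending only on $m,\kappa,M$, such that
\[
\sup_{(\Omega\times\mathbb R^m\times\mathbb R)\cap Q_{\theta r}(X_0,Y_0,t_0)}u\;\le\;\lambda\,\sup_{(\Omega\times\mathbb R^m\times\mathbb R)\cap Q_{r}(X_0,Y_0,t_0)}u.
\]
To get this I would extend $u$ by zero across the lateral boundary $\partial\Omega\times\mathbb R^m\times\mathbb R$ inside $Q_r$; since $u$ vanishes continuously there and is a (local) weak solution, the zero extension $\bar u$ is a subsolution-type object in the full cube (one must check this in the weak formulation \eqref{weak3} against nonnegative test functions, using that $u\ge 0$ near the boundary after replacing $u$ by $u^+$ — actually one applies the argument to $u$ which we may assume nonnegative by splitting into $u^+$ and $u^-$, each of which vanishes on the boundary and is a subsolution of the appropriate inequality). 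Then I invoke Lemma \ref{lem1} (the local boundedness / sup estimate) and a measure-theoretic argument: the set where $\bar u=0$ in $Q_r$ occupies a definite fraction of $Q_r$ because it contains the whole portion $Q_r\setminus(\Omega\times\mathbb R^m\times\mathbb R)$, whose measure is comparable to $|Q_r|$ uniformly in the Lipschitz constant $M$. Combining this with the Harnack-type / weak Harnack inequality (Lemma \ref{harnack}) — or more directly with the De Giorgi oscillation lemma packaged inside Lemma \ref{lem1+} applied to $M_r-\bar u$ — yields the claimed fractional reduction of the supremum.

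Iterating the one-step decay over dyadic scales $\theta^k r$ gives $\sup_{(\Omega\times\mathbb R^m\times\mathbb R)\cap Q_{\theta^k r}}u\le \lambda^k M_r$, and choosing $\alpha:=\log\lambda/\log\theta\in(0,1)$ converts this into the stated estimate: for $(X,Y,t)$ with $d((X,Y,t),(X_0,Y_0,t_0))\approx\theta^k r$ one picks the appropriate $k$ and uses that $Q_{\theta^k r}$ contains a $d$-ball of comparable radius (by \eqref{e-ps.dist} and the comparability of the cubes $Q_\rho$ with the balls $\mathcal B_\rho$). The restriction $(X,Y,t)\in Q_{r/c}$ in the statement accommodates the constant $c$ coming from the pseudo-triangle inequality \eqref{e-ps.tr.in} relating $d$-balls and the boxes $Q_\rho$. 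The main obstacle I anticipate is the first step — verifying carefully, in the weak sense adapted to the Kolmogorov operator \eqref{weak3} with its degenerate drift term $X\cdot\nabla_Y$, that the zero-extension of $u$ (equivalently $u^\pm$) satisfies the correct differential inequality across the Lipschitz-type lateral boundary, and that the geometry of $\Omega\times\mathbb R^m\times\mathbb R$ relative to the non-Euclidean boxes $Q_\rho$ gives a uniform lower density bound for the exterior; both points are handled in \cite{LN} in the more general $(Y,t)$-dependent setting, so here I would simply cite the relevant statements and specialize $A=A(X)$, $\tilde\psi=\psi(x)$.
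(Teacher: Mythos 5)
The paper does not actually prove Lemma~\ref{lem4.5-Kyoto1}: the entire subsection is prefaced with ``We refer to \cite{LN} for the proofs of the following results,'' so the ``proof'' here is a citation, with the technical work carried out in the authors' earlier paper \cite{LN} in the more general setting of $(Y,t)$-dependent coefficients and $(Y,t)$-dependent Lipschitz-type domains. Your proposal is therefore not ``the same route as the paper'' in any literal sense, but it is a reasonable reconstruction of the oscillation-decay argument one expects \cite{LN} to run, and your closing sentence --- that the delicate steps are handled in \cite{LN} and you would cite them --- lands you in the same place as the paper.

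A few remarks on the sketch itself. The reduction to $r=1$, $(X_0,Y_0,t_0)=(0,0,0)$ via the dilations \eqref{dil.alpha.i} and the group law \eqref{e70} is correct, as is the observation that the exterior $(\Omega\times\R^m\times\R)^c\cap Q_r$ has measure $\gtrsim |Q_r|$ uniformly in scale, precisely because $\Omega$ is a Lipschitz graph domain in the $X$-variable alone and is independent of $(Y,t)$ --- so the exterior fraction of each anisotropic box $Q_r$ is governed only by the $x_m$ direction and is bounded below by a constant depending only on $M$. The iteration $\alpha=\log\lambda/\log\theta$ and the passage from boxes $Q_{\theta^k r}$ to $d$-balls via \eqref{e-ps.dist} are also fine. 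The one place where the sketch is imprecise: Lemma~\ref{harnack} is the full Harnack inequality for nonnegative \emph{solutions}, not a weak Harnack for supersolutions, and Lemma~\ref{lem1+} is an interior estimate; neither directly gives the one-step decay for $M_r-\bar u$ (with $\bar u$ the zero extension of $u^+$), which is a supersolution but not a solution across the lateral boundary. What is actually needed is the De~Giorgi measure-to-pointwise estimate for subsolutions (``growth lemma''/``first De Giorgi lemma''), including its non-trivial Kolmogorov version with the hypocoercive spreading-in-$Y$ mechanism established in \cite{Ietal}. That ingredient is not among the lemmas quoted in this paper, so as written your step from ``exterior has positive density'' to ``one-step sup decay'' invokes tools that are formally unavailable; you would need to pull the appropriate statement from \cite{Ietal} or \cite{LN} explicitly. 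You do flag this as the main obstacle and resolve it by citing \cite{LN}, which is consistent with what the paper itself does, so this is a presentational gap rather than a conceptual error.

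One small point on the statement itself: the inequality is an upper bound on $u$ in terms of $\sup u$, so it is only meaningful when $\sup u\geq 0$; in practice the lemma is applied to nonnegative quantities (e.g.\ Poisson kernels in Section~\ref{sec3}), and your device of working with $u^+$ is the right way to reconcile this with the zero-extension/subsolution step.
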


\begin{lemma}\label{lem4.7}  Let $\Omega$ and $A$ be as in Lemma \ref{lem4.5-Kyoto1}.  There
exist $\Lambda=\Lambda(m,M)$, $c=c(m,\kappa,M)$, and  $\gamma=\gamma(m,\kappa,M)$, $0<\gamma<\infty$, such that {the} following
holds.  Let $(X_0,Y_0,t_0)\in\partial\Omega\times\mathbb R^m\times\mathbb R$ and $r>0$. Assume that $u$ is a non-negative weak solution to $\L_\K u=0$ in
$(\Omega\times\mathbb R^m\times\mathbb R)\cap Q_{2r}(X_0,Y_0,t_0)$. Then
\begin{equation}
\begin{split}
u(X,Y,t)&\lesssim(\varrho/d)^\gamma u(A_{\varrho,\Lambda}^+(X_0,Y_0,t_0)),\\
u(X,Y,t)&\gtrsim (d/\varrho)^\gamma u(A_{\varrho,\Lambda}^-(X_0,Y_0,t_0)),
\end{split}
\end{equation}
whenever $(X,Y,t)\in (\Omega\times\mathbb R^m\times\mathbb R)\cap Q_{2\varrho/c}(X_0,Y_0,t_0)$,  $0<\varrho<r/c$, and where $d:=d((X,Y,t),\partial\Omega\times\mathbb R^m\times\mathbb R)$.
\end{lemma}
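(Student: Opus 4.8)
The plan is to derive Lemma~\ref{lem4.7} from the interior Harnack inequality, Lemma~\ref{harnack}, by running an intrinsic Harnack chain argument adapted to the group law \eqref{e70} and the dilations \eqref{dil.alpha.i}. I would first normalise. The class of operators $\L_\K$ satisfying \eqref{eq:Aellip} with a fixed $\kappa$, the Lipschitz character of $\Omega$, and the reference points are all compatible with left translation in $(\mathbb R^{N+1},\circ)$ and with $\delta_\varrho$; indeed $A_{\varrho,\Lambda}^{\pm}(X_0,Y_0,t_0)=(X_0,Y_0,t_0)\circ\delta_\varrho A_{1,\Lambda}^{\pm}$, and $\delta_\varrho$ sends the graph $x_m=\psi(x)$ to a graph with the same Lipschitz constant $M$. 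Hence one may take $(X_0,Y_0,t_0)=0$ and $r=1$, and then fix $\Lambda=\Lambda(m,M)$ large enough that for every $Z\in\Sigma$ and $\sigma\in(0,1)$ the corkscrew points $A_{\sigma,\Lambda}^{\pm}(Z)$ lie at quasi-distance comparable to $\sigma$ from $\Sigma$; this uses only the bound $M$ on $\nabla\psi$. It then suffices to prove the two estimates when $(X,Y,t)$ is itself a corkscrew point $A_{d,\Lambda}^{\pm}(Z)$, $Z\in\Sigma$, at the scale $d$: for a general interior point at distance $d$ from $\Sigma$ a bounded Harnack chain at scale $\approx d$ (using Lemma~\ref{harnack}, or alternatively Lemma~\ref{lem1+}) connects it to such a corkscrew point.

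For the upper bound I would then construct a chain $q_0,q_1,\dots,q_L$ with $q_0=(X,Y,t)$ (spatial scale $d$, time $\approx 0$) and $q_L=A_{\varrho,\Lambda}^{+}$ (spatial scale $\varrho$, time $\varrho^2$), of length $L\lesssim\log_2(\varrho/d)+1$, arranged so that in passing from $q_j$ to $q_{j+1}$ the spatial scale is at most doubled, the time coordinate is advanced, and the $Y$-coordinate is shifted, in such a way that $q_j$ lies in the cylinder $\widetilde Q^{-}$ and $q_{j+1}$ in the cylinder $\widetilde Q^{+}$ associated by Lemma~\ref{harnack} to a common centre at the current scale, on whose full past cylinder $u\ge 0$ (this holds because the cylinder stays in $(\Omega\times\mathbb R^m\times\mathbb R)\cap Q_{2r}$). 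The $t$- and $Y$-components $\pm\varrho^2$ and $\mp\tfrac{2}{3}\Lambda\varrho^3$ in the definition \eqref{pointsref2} of $A_{\varrho,\Lambda}^{\pm}$ are of exactly the order produced by summing the per-step increments over the $\approx\log_2(\varrho/d)$ dyadic scales along the diffusively scaled, drift-adapted trajectory of $\L_\K$, so the endpoint of the telescoping chain lies within a bounded Harnack distance of $A_{\varrho,\Lambda}^{+}$ and one further application of Lemma~\ref{harnack} reaches it. Applying Lemma~\ref{harnack} once per link gives $u(X,Y,t)=u(q_0)\le c\,u(q_1)\le\dots\le c^{L}u(q_L)=c^{L}u(A_{\varrho,\Lambda}^{+})$, and since $L\lesssim\log_2(\varrho/d)+1$ we obtain $c^{L}\lesssim(\varrho/d)^{\gamma}$ for a suitable $\gamma=\gamma(m,\kappa,M)>0$, which after undoing the normalisation is the first asserted inequality. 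The second is proved symmetrically by running a chain forward in time from $A_{\varrho,\Lambda}^{-}$ (spatial scale $\varrho$, time $-\varrho^2$), descending dyadically to scale $d$ and the point $(X,Y,t)$ (time $\approx 0>-\varrho^2$), which gives $u(A_{\varrho,\Lambda}^{-})\le c^{L}u(X,Y,t)\lesssim(\varrho/d)^{\gamma}u(X,Y,t)$, i.e.\ $u(X,Y,t)\gtrsim(d/\varrho)^{\gamma}u(A_{\varrho,\Lambda}^{-})$.

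The main obstacle is the construction of the chain in the Kolmogorov geometry. Because the Harnack inequality for $\L_\K$, Lemma~\ref{harnack}, involves the rigid cylinders $\widetilde Q^{\pm}$ with constants $\alpha,\beta,\gamma,\theta$ that cannot be chosen freely (cf.\ the Remark following Lemma~\ref{harnack}), and because the drift $X\cdot\nabla_Y$ forces any admissible chain to move monotonically forward in time while drifting in $Y$ along the characteristics, the chain cannot be a free string of overlapping balls. One must verify carefully that (i) consecutive cylinders stand in the correct $\widetilde Q^{-}$--$\widetilde Q^{+}$ relation at each dyadic scale; (ii) the whole chain, including the full past cylinders demanded by Lemma~\ref{harnack}, stays at Carnot--Carath\'eodory quasi-distance $\gtrsim$ (current scale) from $\Sigma$, hence inside $(\Omega\times\mathbb R^m\times\mathbb R)\cap Q_{2r}$ --- this is where $\Lambda$ large relative to $M$ enters; and (iii) the number of links is $\lesssim\log_2(\varrho/d)+1$, so that only a power of $\varrho/d$ is lost. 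Everything else --- the passage between a corkscrew point and a general interior point at the bottom scale, and the bookkeeping of $\gamma$ --- is routine. This is, in spirit, how the analogues of Lemma~\ref{lem4.7} are obtained in the uniformly parabolic setting (e.g.\ \cite{FS,FGS}); here the content is the adaptation of the chain to the Kolmogorov geometry, with the interior Harnack inequality of \cite{Ietal} recorded in Lemma~\ref{harnack} as the essential input.
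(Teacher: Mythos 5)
Your proposal is the Harnack chain argument, which is in substance what the reference \cite{LN} (to which the paper defers for this lemma) carries out: a dyadic chain of $\approx\log(\varrho/d)$ links connecting $(X,Y,t)$ at scale $d$ to the reference point at scale $\varrho$, with a fixed Harnack loss per link from Lemma \ref{harnack}, and with the chain forced to advance monotonically in $t$ while drifting in $Y$ along the flow of $X\cdot\nabla_Y$ so as to fit the rigid, time-lagged cylinder geometry of the Kolmogorov Harnack inequality — which is also why the $y_m$-slot of $A^\pm_{\varrho,\Lambda}$ carries the $\mp\tfrac{2}{3}\Lambda\varrho^3$ term. The approach and the difficulties you flag in (i)--(iii) are exactly right; as you acknowledge, carrying out the chain construction is where the Kolmogorov geometry actually bites, so what you have is an accurate roadmap rather than a self-contained proof.
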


\begin{theorem}\label{thm:carleson} Let $\Omega$ and $A$ be as in Lemma \ref{lem4.5-Kyoto1}. Then there exist $\Lambda=\Lambda(m,M)$ and $c=c(m,\kappa,M)$ such that the following holds. Let $(X_0,Y_0,t_0)\in\partial \Omega\times\mathbb R^m\times\mathbb R$ and $r>0$.  Assume that  $u$ is a non-negative weak solution to $\L_\K u=0$ in
$(\Omega\times\mathbb R^m\times\mathbb R)\cap Q_{2r}(X_0,Y_0,t_0)$,
vanishing continuously on $(\partial\Omega\times\mathbb R^m\times\mathbb R)\cap Q_{2r}(X_0,Y_0,t_0)$. Then
\begin{equation*}
u(X,Y,t)\lesssim u(A^+_{\varrho,\Lambda}(X_0,Y_0,t_0))
\end{equation*}
whenever $(X,Y,t)\in (\Omega\times\mathbb R^m\times\mathbb R)\cap Q_{2\varrho/c}(X_0,Y_0,t_0)$, $0<\varrho<r/c$.
\end{theorem}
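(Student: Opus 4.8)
This is the boundary Carleson estimate: a non-negative solution vanishing on a boundary piece is dominated, up to a constant, by its value at the interior reference point $A^+_{\varrho,\Lambda}$. In the paper it is quoted from \cite{LN}; the plan below is how one derives it from the interior and boundary facts already recorded — Lemma \ref{harnack}, Lemma \ref{lem4.5-Kyoto1}, Lemma \ref{lem4.7}. First I would reduce to a supremum estimate: it suffices to find $c_*=c_*(m,\kappa,M)$ and $\Lambda=\Lambda(m,M)$ so that, with $c$ large,
\[M:=\sup_{(\Omega\times\mathbb R^m\times\mathbb R)\cap Q_{2\varrho/c}(X_0,Y_0,t_0)}u\ \le\ c_*\,u\bigl(A^+_{\varrho,\Lambda}(X_0,Y_0,t_0)\bigr),\qquad 0<\varrho<r/c,\]
the case $M=0$ being trivial. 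There are exactly two inputs. Lemma \ref{lem4.7}, used with center $(X_0,Y_0,t_0)$ and scale $\varrho$ (so that its conclusion holds precisely on $Q_{2\varrho/c}(X_0,Y_0,t_0)$), gives $u(X,Y,t)\lesssim(\varrho/d)^\gamma u(A^+_{\varrho,\Lambda}(X_0,Y_0,t_0))$ for any interior point at distance $d$ from $\partial\Omega\times\mathbb R^m\times\mathbb R$, hence the bound whenever $d\gtrsim\varrho$. Lemma \ref{lem4.5-Kyoto1} is the only ingredient that uses the vanishing hypothesis near the boundary, and it will be used to exclude the complementary regime $d\ll\varrho$.

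The argument is then by contradiction. Choose $P^*$ with $u(P^*)\ge M/2$ and set $d^*:=\dist(P^*,\partial\Omega\times\mathbb R^m\times\mathbb R)$. If $d^*\ge\delta_0\varrho$ for a threshold $\delta_0=\delta_0(m,\kappa,M)$ to be fixed, then, since $P^*\in Q_{2\varrho/c}(X_0,Y_0,t_0)$, Lemma \ref{lem4.7} at scale $\varrho$ gives $u(P^*)\lesssim\delta_0^{-\gamma}\,u(A^+_{\varrho,\Lambda}(X_0,Y_0,t_0))$, whence $M\lesssim u(A^+_{\varrho,\Lambda}(X_0,Y_0,t_0))$ and we are finished. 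If instead $d^*<\delta_0\varrho$, choose a boundary point $Z^*$ with $d(P^*,Z^*)\asymp d^*$ and apply Lemma \ref{lem4.5-Kyoto1} with center $Z^*$ and scale $\rho^*:=Kd^*$, with $K=K(m,\kappa,M)$ large and $\delta_0$ correspondingly small; since $\rho^*\ll r$ and $Z^*$ is $O(\varrho/c)$ from $(X_0,Y_0,t_0)$ we have $Q_{2\rho^*}(Z^*)\subset Q_{2r}(X_0,Y_0,t_0)$, where $u$ solves the equation and vanishes continuously on the boundary. Lemma \ref{lem4.5-Kyoto1} then yields
\[u(P^*)\ \lesssim\ \Bigl(\frac{d(P^*,Z^*)}{\rho^*}\Bigr)^{\alpha}\sup_{(\Omega\times\mathbb R^m\times\mathbb R)\cap Q_{2\rho^*}(Z^*)}u\ \lesssim\ K^{-\alpha}\,M,\]
the last inequality provided $Q_{2\rho^*}(Z^*)$ lies inside the cube defining $M$. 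Taking $K$ so large that the implied constant times $K^{-\alpha}$ is $<1/2$ contradicts $u(P^*)\ge M/2$; hence $d^*<\delta_0\varrho$ cannot occur, and the estimate follows.

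I expect the main obstacle to be not this dichotomy but the geometric bookkeeping dictated by the non-Euclidean structure. One has to transport Lemma \ref{lem4.5-Kyoto1} and Lemma \ref{lem4.7} to arbitrary boundary points using the dilations $\delta_r$ and the group law \eqref{e70}, absorb the fixed scale losses (the factor-$c$ discrepancies between hypothesis and conclusion cubes) into a single admissible constant, and — crucially — make sure that every application of the Harnack inequality Lemma \ref{harnack} is consistent with its built-in time ordering. It is exactly this that forces the shape $A^+_{\varrho,\Lambda}=(0,\Lambda\varrho,0,-\tfrac23\Lambda\varrho^3,\varrho^2)$ of the reference point: the $\varrho^2$ time shift, together with the matching $Y$ shift, puts $A^+_{\varrho,\Lambda}(X_0,Y_0,t_0)$ in the ``future'' of $Q_{2\varrho/c}(X_0,Y_0,t_0)$, so that it is reached by forward Harnack chains from every point being estimated — a feature already encoded in Lemma \ref{lem4.7}. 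Finally, the step ``$Q_{2\rho^*}(Z^*)$ lies inside the cube defining $M$'' is the one genuinely delicate point, since the maximizer $P^*$ can sit near $\partial Q_{2\varrho/c}(X_0,Y_0,t_0)$; it is handled by the standard device of replacing $M$ by a supremum weighted so as to vanish on $\partial Q_{2\varrho/c}(X_0,Y_0,t_0)$, forcing the maximizer into the interior, together with the obvious harmless rescaling of $\varrho$. All of this is carried out in \cite{LN}.
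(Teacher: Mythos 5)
The paper itself gives no proof of Theorem~\ref{thm:carleson}: it is quoted verbatim from \cite{LN} (see the sentence opening the subsection, ``We refer to \cite{LN} for the proofs of the following results''). So there is no in-paper argument to compare against; what I can do is assess your reconstruction on its own terms.

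Your reconstruction is essentially the right argument, and the dichotomy you set up is exactly what makes it go: Lemma~\ref{lem4.7} handles near-maximizers $P^*$ with $d^*:=\dist(P^*,\partial\Omega\times\mathbb R^m\times\mathbb R)\gtrsim\varrho$, and Lemma~\ref{lem4.5-Kyoto1} is the only place the vanishing hypothesis enters, used to rule out $d^*\ll\varrho$. You are also right that, as literally written, the single-step contradiction is incomplete, since $\sup_{Q_{2\rho^*}(Z^*)}u$ is not controlled by $M$ when $P^*$ sits close to the lateral boundary of the cube. The fix you name — replacing $M$ by a weighted supremum — does work, but you sell it a little short in the write-up, so let me record what actually closes the gap. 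Take $\Phi:=w^{\beta}u$ with $w(P)=\min\bigl(1,\dist_d(P,\partial Q_{2\varrho/c})/\varrho\bigr)$ and $\beta\ge\gamma$ (the exponent from Lemma~\ref{lem4.7}); let $\tilde P$ maximize $\Phi$ and set $h:=\dist_d(\tilde P,\partial Q_{2\varrho/c})$. If $d^*\ge\delta_0\min(h,\varrho)$, Lemma~\ref{lem4.7} gives $u(\tilde P)\lesssim\delta_0^{-\gamma}w(\tilde P)^{-\gamma}u(A^+_{\varrho,\Lambda})$, and it is exactly the choice $\beta\ge\gamma$ that turns this into a bound $\Phi(\tilde P)\lesssim\delta_0^{-\gamma}u(A^+_{\varrho,\Lambda})$, hence $u\lesssim u(A^+_{\varrho,\Lambda})$ on the slightly smaller cube where $w\gtrsim 1$ (which, after a harmless reparametrization of the constant $c$, is the stated conclusion). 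If instead $d^*<\delta_0\min(h,\varrho)$, then choosing $\rho^*=Kd^*$ with $K\delta_0$ small forces $\rho^*\ll h$; the crucial consequence — and the part ``forcing the maximizer into the interior'' does not by itself deliver — is that $w$ is comparable to $w(\tilde P)$ on all of $Q_{2\rho^*}(Z^*)$, so $\sup_{Q_{2\rho^*}(Z^*)}u\lesssim u(\tilde P)$ follows from the maximality of $\Phi$, and Lemma~\ref{lem4.5-Kyoto1} then yields $u(\tilde P)\le c_1K^{-\alpha}u(\tilde P)$, which is absurd for $K$ large (unless $u\equiv 0$). This is a genuine one-step argument; no iteration or Harnack chain is needed at this stage, and the time-ordering of $A^+_{\varrho,\Lambda}$ is relevant only because it has already been absorbed into Lemma~\ref{lem4.7}. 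With the weight's exponent and the coordination of $K$ and $\delta_0$ made explicit, your outline is a correct proof.
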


\begin{theorem}\label{thm:quotients}  Let $\Omega$ and $A$ be as in Lemma \ref{lem4.5-Kyoto1}.
 Then there exist $\Lambda=\Lambda(m,M)$ and $c=c(m,\kappa,M)$ such that the following holds. Let $(X_0,Y_0,t_0)\in\partial\Omega\times\mathbb R^m\times\mathbb R$ and
 $r>0$.  Assume that $u$ and $v$ are non-negative weak solutions to
 $\L_\K u=0$ in $\Omega\times\mathbb R^m\times\mathbb R$,  vanishing continuously on $(\partial\Omega\times\mathbb R^m\times\mathbb R))\cap Q_{2r}(X_0,Y_0,t_0)$. Let $\varrho_0=r/c$,
 \begin{equation}\label{singa1u1}
 \begin{split}
    m_1^+&=v(A_{\varrho_0,\Lambda}^+(X_0,Y_0,t_0)),\ m_1^-=v(A_{\varrho_0,\Lambda}^-(X_0,Y_0,t_0)),\\
    m_2^+&=u(A_{\varrho_0,\Lambda}^+(X_0,Y_0,t_0)),\ m_2^-=u(A_{\varrho_0,\Lambda}^-(X_0,Y_0,t_0)),
 \end{split}
 \end{equation}
and assume $m_1^->0$, $m_2^->0$. Then there exist constants
$c_1=c_1(m,M)$ and $$c_2=c_2(m,\kappa,M, m_1^+/m_1^-,m_2^+/m_2^-),$$
$1\leq
c_1,c_2<\infty$,  such that if we let $\varrho_1=\varrho_0/c_1$, then
$$c_2^{-1}\frac{v(A_{\varrho,\Lambda}(\tilde X_0,\tilde Y_0,\tilde t_0))}{u(A_{\varrho,\Lambda}(\tilde X_0,\tilde Y_0,\tilde t_0))}\leq \frac { v ( X,Y,t ) }{ u ( X,Y,t ) }\leq c_2\frac{v(A_{\varrho,\Lambda}(\tilde X_0,\tilde Y_0,\tilde t_0))}{u(A_{\varrho,\Lambda}(\tilde X_0,\tilde Y_0,\tilde t_0))},$$
whenever $(X,Y,t)\in (\Omega\times\mathbb R^m\times\mathbb R)\cap Q_{\varrho/c_1}(\tilde X_0,\tilde Y_0,\tilde t_0)$, for some  $0<\varrho<\varrho_1$ and $(\tilde X_0,\tilde Y_0,\tilde t_0)\in
(\partial\Omega\times\mathbb R^m\times\mathbb R)\cap Q_{\varrho_1}(X_0,Y_0,t_0)$.
\end{theorem}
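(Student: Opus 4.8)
The plan is to establish this localized boundary Harnack principle following the Caffarelli--Fabes--Mortola--Salsa scheme, adapted to the non-Euclidean group structure $(\delta_\varrho,\circ)$ of Section \ref{secprim} and to the fact that the Harnack inequality for $\L_\K$ (Lemma \ref{harnack}) propagates only forward in time. First I would fix an admissible centre $(\tilde X_0,\tilde Y_0,\tilde t_0)\in(\partial\Omega\times\mathbb R^m\times\mathbb R)\cap Q_{\varrho_1}(X_0,Y_0,t_0)$ and a scale $0<\varrho<\varrho_1$, write $A^\pm:=A^\pm_{\varrho,\Lambda}(\tilde X_0,\tilde Y_0,\tilde t_0)$, and let $A:=A_{\varrho,\Lambda}(\tilde X_0,\tilde Y_0,\tilde t_0)$ be the reference point at scale $\varrho$, which is Harnack-comparable to the forward point $A^+$. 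Normalizing $u$ and $v$ by dividing by $m_2^+$ and $m_1^+$, we may assume $m_1^+=m_2^+=1$, while $m_1^-,m_2^-\in(0,\infty)$ record the (unchanged) ratios. Since the asserted inequality is symmetric under interchanging $(u,m_2^\pm)$ with $(v,m_1^\pm)$, it suffices to prove the upper bound $v(X,Y,t)/u(X,Y,t)\le c_2\,v(A)/u(A)$ for $(X,Y,t)\in(\Omega\times\mathbb R^m\times\mathbb R)\cap Q_{\varrho/c_1}(\tilde X_0,\tilde Y_0,\tilde t_0)$.

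For the upper bound I would combine the Carleson estimate with a comparison-principle argument. By Theorem \ref{thm:carleson}, applied with centre $(\tilde X_0,\tilde Y_0,\tilde t_0)$, one has $v\lesssim v(A^+)$ on a Carleson region $\mathcal R$ around that point, while Lemma \ref{lem4.7} yields the matching lower bound $u\gtrsim u(A^-)$ at those points of $\mathcal R$ lying at distance $\gtrsim\varrho$ from $\partial\Omega\times\mathbb R^m\times\mathbb R$; on the remaining, lateral portion of $\partial\mathcal R$ both $u$ and $v$ vanish continuously. With $\lambda\approx v(A^+)/u(A^-)$, the function $w:=v-\lambda u$, a weak solution of $\L_\K w=0$, is then $\le 0$ on the relevant portion of $\partial\mathcal R$, so the maximum principle for bounded weak solutions of $\L_\K$ (which underlies the well-posedness recalled in Theorem \ref{EUDP} and \cite{LN1}) gives $w\le 0$ in $\mathcal R$, i.e.\ $v\le\lambda u$ there. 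The points of $\mathcal R$ near the ``corner'' where it meets $\partial\Omega\times\mathbb R^m\times\mathbb R$ --- where the crude lower bound on $u$ fails --- are handled by iterating this comparison over a dyadic family of sub-boxes, again using that $v$ vanishes on the lateral boundary; scale invariance of the maps $\delta_{2^{-k}}$ keeps the accumulated constant bounded. This step gives $v(X,Y,t)/u(X,Y,t)\lesssim v(A^+)/u(A^-)$.

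It then remains to pass from $v(A^+)/u(A^-)$ to $v(A)/u(A)$. Writing $v(A^+)/u(A^-)=\bigl(v(A^+)/u(A^+)\bigr)\bigl(u(A^+)/u(A^-)\bigr)$, the first factor is $\approx v(A)/u(A)$ by a bounded-length interior Harnack chain (Lemma \ref{harnack}) joining $A^+$ to $A$. For the second factor --- the forward/backward corkscrew ratio $u(A^+)/u(A^-)$ --- one needs the key auxiliary fact that, for a non-negative weak solution $w$ of $\L_\K w=0$ vanishing continuously on $(\partial\Omega\times\mathbb R^m\times\mathbb R)\cap Q_{2r}(X_0,Y_0,t_0)$, the ratio $w(A^+_{\varrho,\Lambda}(\tilde X_0,\tilde Y_0,\tilde t_0))/w(A^-_{\varrho,\Lambda}(\tilde X_0,\tilde Y_0,\tilde t_0))$ at small scales $\varrho$ and admissible centres is controlled, up to a constant depending only on $m,\kappa,M$, by the corresponding ratio $w(A^+_{\varrho_0,\Lambda}(X_0,Y_0,t_0))/w(A^-_{\varrho_0,\Lambda}(X_0,Y_0,t_0))$ at the fixed outer scale. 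This is a backward-Harnack-type statement at the boundary; granting it, one gets $u(A^+)/u(A^-)\lesssim m_2^+/m_2^-$, hence $v(X,Y,t)/u(X,Y,t)\lesssim (m_2^+/m_2^-)\,v(A)/u(A)$, and the symmetric argument produces the constant $c_2=c_2(m,\kappa,M,m_1^+/m_1^-,m_2^+/m_2^-)$.

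The hard part will be precisely the boundary backward-Harnack estimate just invoked. The operator $\L_\K$ admits no backward Harnack inequality in the interior --- this is reflected in the strictly forward-in-time geometry of the cylinders $\widetilde Q^\pm$ in Lemma \ref{harnack} --- and a naive Harnack chain from the fixed corkscrew point $A^+_{\varrho_0,\Lambda}(X_0,Y_0,t_0)$ down to a point of depth $\varrho$ has length $\sim\log(\varrho_0/\varrho)$ and would spoil scale invariance. Circumventing this requires iterating the boundary estimates of Lemmas \ref{lem4.5-Kyoto1} and \ref{lem4.7} and the Carleson estimate Theorem \ref{thm:carleson} dyadically in $\varrho$, and carrying the outer-scale ratios $m_i^+/m_i^-$ through the iteration; this is the technical core of the argument, and is exactly what is carried out in \cite{LN}.
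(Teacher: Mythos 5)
The paper does not actually prove Theorem \ref{thm:quotients}: Section \ref{sec1.5} explicitly refers the reader to \cite{LN} for the proofs of all the boundary estimates stated there, including this quotient theorem, which is simply restated here for the specialization $A=A(X)$ and $(Y,t)$-independent domains. There is therefore no in-paper proof to compare your attempt against, and, notably, your own sketch also ends by deferring the key step to \cite{LN}.

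That said, your outline correctly identifies the expected architecture---a CFMS-type two-sided comparison assembled from the Carleson estimate (Theorem \ref{thm:carleson}), the corkscrew bounds (Lemma \ref{lem4.7}), a maximum-principle/dyadic iteration near the corner of the Carleson region, and an interior Harnack chain---and, more importantly, it correctly isolates the genuine obstruction: the absence of an interior backward Harnack inequality for $\L_\K$ forces one to transfer the forward/backward corkscrew ratio $u(A^+)/u(A^-)$ from the small scale $\varrho$ up to the fixed outer scale $\varrho_0$, which is precisely why $c_2$ is allowed to depend on $m_1^+/m_1^-$ and $m_2^+/m_2^-$. However, you assert this boundary backward-Harnack-type transfer without proving it; since it is the technical heart of the theorem, the proposal remains a plan rather than a proof. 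There are also imprecisions in the comparison-region step: $u$ and $v$ do not vanish on the lateral sides of the Carleson box $\mathcal{R}$ (they vanish only on $\partial\mathcal{R}\cap(\partial\Omega\times\mathbb R^m\times\mathbb R)$), so the claim that ``on the remaining, lateral portion of $\partial\mathcal{R}$ both $u$ and $v$ vanish continuously'' is not correct as written, and the dyadic corner iteration that is supposed to repair the comparison there is only alluded to, not carried out.
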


\subsection{Estimates of Green functions and parabolic measures}  The adjoint operator of  $\L_\K$ is defined as
\begin{equation}\label{PDEagg}
\L_\K^\ast:=\nabla_X\cdot(A(X)\nabla_X)-X\cdot\nabla_Y+\partial_t,
\end{equation}
as $A$ is assumed to be symmetric.

\begin{remark}
We remark that for non-negative weak solutions to the adjoint equation $\L_\K^\ast u = 0$, adjoint versions of Lemma \ref{lem4.7}, Theorem \ref{thm:carleson}, and Theorem \ref{thm:quotients} hold. The statements in the adjoint versions are the same, except that the roles of $A^+_{\varrho,\Lambda}(X_0,Y_0,t_0)$ and $A^-_{\varrho,\Lambda}(X_0,Y_0,t_0)$ are reversed.
\end{remark}

\begin{definition}\label{fund}
A fundamental solution for $\L_\K$ is a continuous and positive function $\Gamma_\K=\Gamma_\K(X,Y,t,\tilde X,\tilde Y,\tilde t)$, defined for
$\tilde t<t$ and $(X,Y), (\tilde X,\tilde Y)\in\R^{N}$, such that
\begin{itemize}
  \item[(i)] {$\Gamma_\K( \cdot,\cdot,\cdot, \tilde X,\tilde Y,\tilde t)$} is a weak solution of $\L_\K u=0$ in $\mathbb R^N\times (\tilde t,\infty)$ and
  $\Gamma_\K(X,Y,t,\cdot,\cdot,\cdot)$ is a weak solution of $\L_\K^{*} u=0$ in $\mathbb R^N\times (-\infty,t)$,
  \item[(ii)] for any bounded function $\phi\in C(\R^{N})$ and $(X,Y), (\tilde X,\tilde Y)\in\R^{N}$, we have
\begin{align}
  \lim_{(X,Y,t)\to(\tilde X,\tilde Y,\tilde t)\atop t>\tilde t}u(X,Y,t)=\phi(\tilde X,\tilde Y), %\qquad y\in\R^{d},
  \qquad \lim_{(\tilde X,\tilde Y,\tilde t)\to(X,Y,t)\atop t>\tilde t}v(\tilde X,\tilde Y,\tilde t)=\phi(X,Y), %,\qquad x\in\R^{d},
\end{align}
where
  \begin{equation}\label{ae11}
  \begin{split}
 u(X,Y,t)&:=\iint_{\R^{N}}\Gamma_\K(X,Y,t,\tilde X,\tilde Y,\tilde t)\phi(\tilde X,\tilde Y)\, \d\tilde X\d\tilde Y,\\
 v(\tilde X,\tilde Y,\tilde t)&:=\iint_{\R^{N}}\Gamma_\K(X,Y,t,\tilde X,\tilde Y,\tilde t)\phi(X,Y)\, \d X\d Y.%,\qquad T>t,\ y\in\R^{d}.
 \end{split}
\end{equation}
\end{itemize}
\end{definition}

\begin{lemma}\label{lem_fsolbounds}  Assume that $A$ satisfies \eqref{eq2+re}. Then there exists a fundamental solution to $\L_\K$ in the sense of Definition \ref{fund}. Let $\Gamma_\K(X,Y,t,\tilde X,\tilde Y,\tilde t)$ be the fundamental solution to $\L_\K$. Then we have the upper bound
\begin{equation}\label{fundsolbd}
    \Gamma_\K(X,Y,t,\tilde X,\tilde Y,\tilde t) \lesssim \frac{1}{d((X,Y,t),(\tilde X,\tilde Y,\tilde t))^{\mathbf{q}-2}},
\end{equation}
for all $(X,Y,t)$, $(\tilde X,\tilde Y,\tilde{t})$ with $t>\tilde t$.
\end{lemma}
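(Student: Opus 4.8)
The plan is to reduce everything to the constant‑coefficient model operators, for which the fundamental solution is explicit, and then to transfer the bounds by the classical parametrix (Levi) method, using in an essential way that by \eqref{eq2+re} the coefficients $a_{i,j}$ are smooth and that by \eqref{compre} the matrix $A$ equals the identity $I_m$ outside a fixed compact set. Write $z=(X,Y,t)$ and $\zeta=(\tilde X,\tilde Y,\tilde t)$. First I would record the model case: for a constant symmetric matrix $A_0$ satisfying \eqref{eq:Aellip}, the operator $\L^0_{A_0}:=\nabla_X\cdot(A_0\nabla_X)+X\cdot\nabla_Y-\partial_t$ is, after the linear change of variables that diagonalizes $A_0$, a homogeneous Kolmogorov operator of the type introduced in \cite{K} and recognised as hypoelliptic in \cite{H} (the H\"ormander rank condition holds at step two, via the commutators of $\partial_{x_i}$ with the drift $X\cdot\nabla_Y-\partial_t$). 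Its fundamental solution $\Gamma^0_{A_0}$ is then given by Kolmogorov's explicit Gaussian formula; in particular $\Gamma^0_{A_0}$ is left‑translation invariant for the group law \eqref{e70}, homogeneous of degree $-(\mathbf{q}-2)$ for the dilations \eqref{dil.alpha.i}, strictly positive for $t>\tilde t$, and satisfies both $\Gamma^0_{A_0}(z,\zeta)\approx d(z,\zeta)^{-(\mathbf{q}-2)}$ and the Gaussian upper bound $\Gamma^0_{A_0}(z,\zeta)\lesssim (t-\tilde t)^{-\mathbf{q}/2}\exp(-c\,d(z,\zeta)^2/(t-\tilde t))$, with constants depending only on $m$ and $\kappa$.

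Next I would construct $\Gamma_\K$ by freezing the coefficient matrix at the pole: with the parametrix $Z(z,\zeta):=\Gamma^0_{A(\tilde X)}(z,\zeta)$ one sets $\Gamma_\K=Z+\int Z\ast\Phi$, where $\ast$ denotes convolution in the pole variable over the slab $\tilde t<s<t$ and $\Phi$ solves the Volterra equation $\Phi=\L_\K Z+(\L_\K Z)\ast\Phi$. Because the $a_{i,j}$ are smooth, the error $\L_\K Z(z,\zeta)$ gains a factor $d(z,\zeta)$ compared with $Z$ (and inherits Gaussian decay in $\zeta$), so the Volterra series converges locally uniformly and yields a continuous positive function which, by the standard arguments for Kolmogorov operators with smooth coefficients, is a fundamental solution in the sense of Definition \ref{fund}; the compact support of $A-I_m$ makes the construction global and the estimates uniform in $z,\zeta$. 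This part is entirely classical, and I would simply invoke the literature on Kolmogorov–H\"ormander operators with smooth coefficients.

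Finally, for the bound \eqref{fundsolbd}, the parametrix representation together with the reproducing property of the Gaussian kernels under the group convolution and the model estimate of Step~1 gives the uniform bound $\Gamma_\K(z,\zeta)\lesssim (t-\tilde t)^{-\mathbf{q}/2}\exp(-c\,d(z,\zeta)^2/(t-\tilde t))$ for $t>\tilde t$; maximising $s\mapsto s^{-\mathbf{q}/2}e^{-c/s}$ over $s>0$ (the maximum being attained for $s\approx d(z,\zeta)^2$) yields exactly $\Gamma_\K(z,\zeta)\lesssim d(z,\zeta)^{-(\mathbf{q}-2)}$. Alternatively, one can bypass the parametrix for the \emph{upper} bound and re‑use tools already in the paper: the energy estimate Lemma \ref{lem1en} together with the Sobolev–Nash inequality adapted to the Kolmogorov scaling gives the ultracontractivity bound $\|u(\cdot,t)\|_{L^\infty}\lesssim (t-\tilde t)^{-\mathbf{q}/2}\|u(\cdot,\tilde t^+)\|_{L^1}$ for nonnegative solutions, hence the on‑diagonal bound $\Gamma_\K(z,\zeta)\lesssim (t-\tilde t)^{-\mathbf{q}/2}$; combined with the mass bound $\iint \Gamma_\K(X,Y,t,\tilde X,\tilde Y,\tilde t)\,\d\tilde X\d\tilde Y\le 1$ (maximum principle), the local boundedness estimate Lemma \ref{lem1}, and the Harnack inequality Lemma \ref{harnack}, this gives \eqref{fundsolbd} on the parabolic diagonal $(t-\tilde t)\approx d(z,\zeta)^2$, while the remaining regime $(t-\tilde t)\ll d(z,\zeta)^2$ is absorbed since there $d(z,\zeta)^{-(\mathbf{q}-2)}\gtrsim (t-\tilde t)^{-\mathbf{q}/2}$.

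I expect the main obstacle to be precisely the off‑diagonal control — equivalently, the global‑in‑space convergence of the parametrix series with uniform Gaussian bounds, or, in the DGNM route, the passage from the on‑diagonal bound to \eqref{fundsolbd} when $(t-\tilde t)\ll d(z,\zeta)^2$. This is exactly the point at which \eqref{compre}, the normalisation $A\equiv I_m$ off a compact set, is used; without it one would have to carry the Gaussian tails through the whole argument. Everything else is either explicit (Step~1) or standard (Step~2 and the Moser/Nash iteration in the alternative route).
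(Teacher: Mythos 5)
Your route — construct $\Gamma_\K$ by the Levi/parametrix method starting from the explicit constant‑coefficient Kolmogorov kernel, then derive \eqref{fundsolbd} from a Gaussian upper bound — is exactly what the paper's proof invokes: the paper cites \cite{DeM,DiFP,P1,LPP} for existence under H\"older continuous coefficients (implied here by \eqref{eq2+re}), and Lemma 4.17 of \cite{LN} for the quantitative bound. So there is no difference of method; you have filled in the content of those citations.

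There is, however, an arithmetic slip that runs through Steps 1 and 3 and matters here because the lemma's conclusion involves precisely the exponent $\mathbf{q}-2$. The on‑diagonal size of the Kolmogorov kernel is $(t-\tilde t)^{-(\mathbf{q}-2)/2}=(t-\tilde t)^{-2m}$, not $(t-\tilde t)^{-\mathbf{q}/2}$: spatial Lebesgue measure $\d X\d Y$ scales under $\delta_r$ by $r^{4m}=r^{\mathbf{q}-2}$ (only $\d t$ contributes the extra $r^2$), so the $L^1\to L^\infty$ smoothing norm is of order $(t-\tilde t)^{-(\mathbf{q}-2)/2}$; for the heat equation this is the familiar $t^{-m/2}$ with $\mathbf q=m+2$. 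Correspondingly your optimisation is off by a full power: $\max_{s>0}s^{-a}e^{-cd^2/s}$ is attained at $s\approx d^2$ and equals, up to constants, $d^{-2a}$, so $a=\mathbf{q}/2$ would give the (false) bound $d^{-\mathbf{q}}$, whereas $a=(\mathbf{q}-2)/2$ gives the correct $d^{-(\mathbf{q}-2)}$. The same correction applies to the ultracontractivity estimate in your alternative DGNM route. Once $\mathbf q/2$ is replaced by $(\mathbf q -2)/2$ throughout, the argument is consistent. As a side remark, ultracontractivity for Kolmogorov‑type operators is not as automatic as in divergence‑form elliptic or parabolic equations, since the dissipation acts only through $\nabla_X$ and the $(Y,t)$‑regularisation comes from hypocoercivity via the drift — this is exactly the content of \cite{Ietal}, already assumed in the paper, so the alternative route is available but it is not the one the paper or its references take for this lemma.
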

\begin{proof} We refer to \cite{DeM,DiFP,P1} for the existence of the  fundamental solution for $\L$ under the additional condition that the coefficients are
H\"older continuous. See also \cite{LPP}. For the quantitative estimate we refer to Lemma 4.17 in \cite{LN} and the subsequent discussion.
\end{proof}

Assume that $\Omega\subset\mathbb R^{m}$ is a (unbounded)  Lipschitz domain  with constant $M$. We define the Green function associated to $\L_\K$ for $\Omega\times \mathbb R^m\times \mathbb R$, with pole at $(\hat X,\hat Y,\hat t)\in \Omega\times \mathbb R^m\times \mathbb R$, as
\begin{equation}\label{ghh1-}
\begin{split}
G_{\K}(X,Y,t,\hat X,\hat Y,\hat t)=\ &\Gamma_{\K}(X,Y,t, \hat X,\hat Y,\hat t)\\
&-\iiint_{\partial\Omega\times\R^m\times\R}
\Gamma_{\K}(\tilde X,\tilde Y,\tilde t, \hat X,\hat Y,\hat t)\d\omega_{\K}(X,Y,t,\tilde X,\tilde Y,\tilde t),
\end{split}
\end{equation}
where $\Gamma_{\K}$ is the fundamental solution to the operator $\L_{\K}$. If we instead consider $(X,Y,t)\in \Omega\times \mathbb R^m\times \mathbb R$ as fixed, then, for $(\hat X,\hat Y,\hat t)\in
\Omega\times \mathbb R^m\times \mathbb R$,
\begin{equation}\label{ghh1---}
\begin{split}
G_{\K} (X,Y,t,\hat X,\hat Y,\hat t)=\ &\Gamma_{\K}(X,Y,t, \hat X,\hat Y,\hat t)\\
&-\iiint_{\partial\Omega\times\R^m\times\R}
\Gamma_{\K}(X,Y,t, \tilde X,\tilde Y,\tilde t)\d\omega_{\K}^\ast(\hat X,\hat Y,\hat t,\tilde X,\tilde Y,\tilde t),
\end{split}
\end{equation}
where
$\omega_{\K}^\ast(\hat X,\hat Y,\hat t,\cdot)$ is the associated adjoint Kolmogorov measure
relative to $(\hat X,\hat Y,\hat t)$ and $\Omega\times \mathbb R^m\times \mathbb R$.  The corresponding Green functions associated to $\L_\E$  and $\L_\P$, for $\Omega$ and $\Omega\times  \mathbb R$, are denoted $G_{\E}$ and $G_{\P}$, respectively.

Let $\theta\in C_0^\infty(\mathbb R^{N+1})$. The following representation formulas are proved in Lemma 8.3 in \cite{LN},
 \begin{equation}\label{ghh10-}
 \begin{split}
  \theta( \hat X,\hat Y,\hat t)&=\iiint_{\partial\Omega\times\R^m\times\R}\theta(   X,   Y,   t)\d \omega_\K( \hat X,\hat Y,\hat t,   X,   Y,   t)\\
  &-\iiint_{\Omega\times\R^m\times\R} A(   X)\nabla_{  X}G_\K  ( \hat X,\hat Y,\hat t,  X,  Y,  t )\cdot\nabla_{  X}\theta(  X,  Y,   t)
 \d  X\d   Y \d   t\\
 &+\iiint_{\Omega\times\R^m\times\R} G_\K  ( \hat X,\hat Y,\hat t,   X,   Y,  t )( X\cdot\nabla_{  Y}-\partial_{ t})\theta(  X,  Y,  t)
 \d  X\d  Y \d  t,\\
  \theta( \hat X,\hat Y,\hat t)&=\iiint_{\partial\Omega\times\R^m\times\R}\theta(   X,   Y,  t)\d \omega_\K^\ast( \hat X,\hat Y,\hat t,  X,  Y,  t)\\
  &-\iiint_{\Omega\times\R^m\times\R}  A(  X)\nabla_{  X} G_\K (  X,  Y,  t, \hat X,\hat Y,\hat t )\cdot \nabla_{  X}\theta(  X,  Y,  t) \d  X\d  Y \d  t\\
  &+\iiint_{\Omega\times\R^m\times\R} G_\K (  X,   Y,   t, \hat X,\hat Y,\hat t)(-  X\cdot\nabla_{  Y}+\partial_{ t})\theta(  X,  Y,  t) \d  X\d  Y \d  t,
\end{split}
\end{equation}
whenever $( \hat X,\hat Y,\hat t)\in\Omega\times\R^m\times\R$.

The following lemmas, Lemma \ref{greenmeasurerelation} and Lemma \ref{greenestimate}, are proved in \cite{LN}, see in particular section 8 in \cite{LN}. Theorem \ref{thm:doub} stated below is one of the main results in \cite{LN}.

\begin{lemma}\label{greenmeasurerelation}
 Let $\Omega$ and $A$ be as in Lemma \ref{lem4.5-Kyoto1}.  Then there exist
 $\Lambda=\Lambda(m,M)$, $1\leq \Lambda<\infty$,   $c=c(m,\kappa,M)$,  $1\leq c<\infty$, such that the following is true. Let $(X_0,Y_0,t_0)\in\partial\Omega\times\mathbb R^m\times\mathbb R$, $0<\varrho<\infty$. Then
\begin{align*}
    \varrho^{{\bf q}-2}G_\K (X,Y,t,A^+_{\varrho,\Lambda}(X_0,Y_0,t_0)) &\lesssim \omega_{\mathcal{K}}(X,Y,t,\Delta_{\varrho}(X_0,Y_0,t_0))\notag\\
    &\lesssim\varrho^{{\bf q}-2}G_\K (X,Y,t,A^-_{c\varrho,\Lambda}(X_0,Y_0,t_0)),
\end{align*}
whenever $(X,Y,t)\in\Omega\times\R^m\times \R$, $t\geq t_0+c\varrho^2$.
\end{lemma}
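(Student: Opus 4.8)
The plan is to reduce to unit scale, to establish the two estimates first at a single interior reference point lying well in the future of the surface cube, and then to propagate them to all admissible $(X,Y,t)$ by combining the boundary estimates of Section~\ref{sec1.5} with the maximum principle for $\L_\K$. By the invariance of the hypotheses and of every estimate quoted in Section~\ref{sec1.5} under the dilations \eqref{dil.alpha.i}, carried out together with the corresponding dilation of the coefficients (which preserves \eqref{eq:Aellip} and the Lipschitz constant $M$), we may assume $\varrho=1$. Write $\Delta:=\Delta_1(X_0,Y_0,t_0)$, $A^+:=A_{1,\Lambda}^+(X_0,Y_0,t_0)$, $A^-:=A_{c,\Lambda}^-(X_0,Y_0,t_0)$, and, for $Z=(X,Y,t)$, $\psi(Z):=\omega_\K(Z,\Delta)$, $\phi^{\pm}(Z):=G_\K(Z,A^{\pm})$. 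We will fix $\Lambda=\Lambda(m,M)$ large enough for Lemma~\ref{lem4.7}, Theorem~\ref{thm:carleson} and Theorem~\ref{thm:quotients} to apply, and $c=c(m,\kappa,M)$ large enough that $A^+$ lies at $d$-distance $\approx1$ and $A^-$ at $d$-distance $\approx c$ from $\partial\Omega\times\R^m\times\R$, that the times $t(A^+)=t_0+1$ and $t(A^-)=t_0-c^2$ both lie below $t_0+\tfrac12 c$, and that every cube invoked below is disjoint from $\overline{\Delta}$. The asymmetric appearance of $A^+$ in the lower bound and $A^-$ in the upper bound, and the hypothesis $t\ge t_0+c$, are dictated by the forward-in-$t$ character of $\L_\K$; note also that $\phi^+$ and $\phi^-$ are non-negative solutions of $\L_\K u=0$ bounded respectively by $\Gamma_\K(\cdot,A^+)$ and $\Gamma_\K(\cdot,A^-)$, and both vanish continuously on $\partial\Omega\times\R^m\times\R$ away from their poles.

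\emph{Reference-point estimates.} Let $A^{\ast}$ be an interior point at $d$-distance $\approx1$ from $\partial\Omega\times\R^m\times\R$ with time-coordinate $\approx t_0+c$, reachable from $\Delta$, $A^+$ and $A^-$ by interior Harnack chains of bounded length moving forward in $t$. First, $\psi(A^{\ast})\gtrsim1$: the function $1-\omega_\K(\cdot,\Delta_2(X_0,Y_0,t_0))=\omega_\K\big(\cdot,(\partial\Omega\times\R^m\times\R)\setminus\Delta_2(X_0,Y_0,t_0)\big)$ is a non-negative weak solution of $\L_\K u=0$, bounded by $1$, vanishing continuously on $\Delta$, so by the boundary H\"older decay of Lemma~\ref{lem4.5-Kyoto1} it is $\le\tfrac12$ at an interior point at sufficiently small scale; hence $\omega_\K\ge\tfrac12$ there, and the interior Harnack inequality (Lemma~\ref{harnack}) carries this bound forward in $t$ to $A^{\ast}$, the discrepancy between $\Delta_2$ and $\Delta$ costing only a fixed constant. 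Second, $\phi^{\pm}(A^{\ast})\approx1$: the upper bound follows from $0\le G_\K\le\Gamma_\K$ and the fundamental-solution bound \eqref{fundsolbd}, since $d(A^{\ast},A^+)\approx1$ and $d(A^{\ast},A^-)\approx c$ with the correct time-ordering and $c^{-({\bf q}-2)}\approx1$; for the lower bound we use the defining identity \eqref{ghh1-}, observing that the subtracted harmonic extension is bounded by the value of $\Gamma_\K(\cdot,A^{\pm})$ at $d$-distance $\gtrsim1$ from the pole, hence $\lesssim1$, whereas $\Gamma_\K(\cdot,A^{\pm})$ admits a matching lower bound $\gtrsim1$ at points at sufficiently small $d$-distance from the pole (the on-diagonal lower bound for the Kolmogorov fundamental solution, cf.\ \cite{Ietal,LN}), so that $G_\K\gtrsim\Gamma_\K\gtrsim1$ there and Lemma~\ref{harnack} propagates this forward in $t$ to $A^{\ast}$.

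\emph{Propagation.} Work in $D:=(\Omega\times\R^m\times\R)\cap\{t>t_0+\tfrac12 c\}$, where $\psi$, $\phi^+$, $\phi^-$ are non-negative solutions of $\L_\K u=0$ (the poles $A^{\pm}$ lie outside $D$, and $\overline{\Delta}$ sits at times $\le t_0+1<t_0+\tfrac12 c$) that all vanish continuously on the lateral boundary $\partial\Omega\times\R^m\times\{t>t_0+\tfrac12 c\}$. I would then apply the boundary Harnack principle (Theorem~\ref{thm:quotients}, with the pairs $\{\psi,\phi^+\}$ and $\{\psi,\phi^-\}$) along that lateral boundary, together with interior Harnack chains moving forward in $t$ to connect the corkscrew points at the relevant boundary points to $A^{\ast}$, so as to obtain on the time-slice $S:=\Omega\times\R^m\times\{t_0+c\}$ the comparisons $\phi^+\lesssim\psi$ and $\psi\lesssim\phi^-$, the constants coming from the reference-point estimates; here one uses that the auxiliary ratios $m_i^{\pm}$ of Theorem~\ref{thm:quotients} are $\approx1$ uniformly, because $A_{\rho,\Lambda}^+$ is later in $t$ than $A_{\rho,\Lambda}^-$ and hence controlled by it via Lemma~\ref{harnack}, so the comparison constant in Theorem~\ref{thm:quotients} stays uniform. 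Finally the maximum principle for $\L_\K$ on $D$ upgrades these surface comparisons: $\phi^+-c\psi$ and $\psi-c\phi^-$ are bounded solutions of $\L_\K u=0$ in $D$ that are $\le0$ on the lateral boundary of $D$ (all three functions vanish there) and on $S$, and that tend to $0$ as $t(Z)\to\infty$ uniformly in $(X,Y)$ (from the Gaussian decay of $\Gamma_\K$ and the decay of $\omega_\K(\cdot,\Delta)$ at large scales), hence are $\le0$ throughout $D$; equivalently one runs the maximum principle on $D\cap\{t<T\}$ and lets $T\to\infty$, using that $\psi$ is bounded. Restricting to $t(Z)\ge t_0+c$ and undoing the scaling gives the Lemma.

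\emph{Main difficulty.} The technical heart is the propagation step: ensuring that the boundary Harnack comparison holds with a single fixed constant along the \emph{unbounded} lateral boundary of $D$ --- i.e.\ that the ratios $G_\K(\cdot,A^{\pm})/\omega_\K(\cdot,\Delta)$ do not deteriorate as $(X,Y)\to\infty$ --- and then coupling this with the maximum principle in the unbounded region $D$ while keeping precise control of the behaviour at temporal and spatial infinity. Tightly linked to this is the bookkeeping of the time-asymmetry of $\L_\K$: the reference point $A^{\ast}$, the slice $S$ and the poles $A^{\pm}$ must be placed far enough in the future that $\Delta$, $A^+$, $A^-$ and all the cubes used are suitably separated and that every Harnack chain used only moves forward in $t$. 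A secondary technical point is the on-diagonal lower bound for $\Gamma_\K$ invoked in the reference-point estimates, which is standard but slightly stronger than the upper bound recorded in Lemma~\ref{lem_fsolbounds}.
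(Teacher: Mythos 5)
The paper does not prove Lemma~\ref{greenmeasurerelation} in the text you were given --- it is imported from Section~8 of \cite{LN} --- so there is no in-text argument to compare against; your proposal is a reconstruction. The standard route in this circle of results, and the one the companion paper is built around, does not go through the boundary Harnack principle at all. For the upper bound one applies the representation formula \eqref{ghh10-} to a smooth bump $\theta\approx\chi_{\Delta}$: since $\theta(\hat Z)=0$ once $\hat Z$ is in the future of the cube, the identity expresses $\omega_\K(\hat Z,\Delta)$ through annulus integrals of $G_\K(\hat Z,\cdot)$ and $\nabla_XG_\K(\hat Z,\cdot)$ near $\Delta$, which are then controlled by a single Green-function value $\varrho^{{\bf q}-2}G_\K(\hat Z,A^-_{c\varrho,\Lambda})$ via the energy estimate (Lemma~\ref{lem1en}) and the adjoint Carleson estimate (Theorem~\ref{thm:carleson}); no boundary Harnack enters. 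Note also that your reference-point lower bound for $G_\K$ leans on an on-diagonal \emph{lower} bound for $\Gamma_\K$ that the paper nowhere records (Lemma~\ref{lem_fsolbounds} states only the upper bound); you flag this yourself, and the estimate is in \cite{LPP}, but it is an extra import.

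The propagation step has a genuine gap. To run Theorem~\ref{thm:quotients} along the whole lateral boundary of $D$ with a single constant, you must control the ratios $m_i^{+}/m_i^{-}$ uniformly \emph{from above}. You assert this follows from Lemma~\ref{harnack} because $A^+_{\varrho,\Lambda}$ lies later in $t$ than $A^-_{\varrho,\Lambda}$; but the interior Harnack inequality only moves forward in time, so it gives $u(A^-_{\varrho,\Lambda})\lesssim u(A^+_{\varrho,\Lambda})$, i.e.\ $m^+/m^-\gtrsim1$ --- the opposite of what you need. The required bound $m^+/m^-\lesssim1$ is a backward-Harnack estimate. For the Green functions $\phi^\pm$ it could be supplied by an adjoint form of Lemma~\ref{greenestimate}, but for $\psi=\omega_\K(\cdot,\Delta)$ it is precisely the backward-Harnack/doubling property of the Kolmogorov measure, which in the development of \cite{LN} is \emph{downstream} of Lemma~\ref{greenmeasurerelation} (it feeds into Theorem~\ref{thm:doub}) rather than available before it, so invoking it here is circular. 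A correct version of your strategy would establish the comparison first locally near $\Delta$, where no uniform backward-Harnack control is needed, and then extend by the maximum principle together with the decay at infinity, rather than by a global boundary-Harnack sweep.
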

\begin{lemma}\label{greenestimate}
 Let $\Omega$ and $A$ be as in Lemma \ref{lem4.5-Kyoto1}.  Then there exist
 $\Lambda=\Lambda(m,M)$, $1\leq \Lambda<\infty$,   $c=c(m,\kappa,M)$,  $1\leq c<\infty$, such that the following is true. Let $(X_0,Y_0,t_0)\in\partial\Omega\times\mathbb R^m\times\mathbb R$, $0<\varrho<\infty$. Then
\begin{align*}
G_\K (X,Y,t,A^-_{\varrho,\Lambda}(X_0,Y_0,t_0))\lesssim G_\K (X,Y,t,A^+_{\varrho,\Lambda}(X_0,Y_0,t_0))\lesssim G_\K (X,Y,t,A^-_{\varrho,\Lambda}(X_0,Y_0,t_0)),
\end{align*}
whenever $(X,Y,t)\in\Omega\times\R^m\times \R$, $t\geq t_0+c\varrho^2$.
\end{lemma}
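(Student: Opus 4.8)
The plan is to use that, for a fixed base point $(X,Y,t)\in\Omega\times\mathbb R^m\times\mathbb R$ with $t\ge t_0+c\varrho^2$, the function $w:=G_\K(X,Y,t,\cdot)$ is a non-negative weak solution of the adjoint equation $\L_\K^\ast w=0$ (see \eqref{PDEagg}) in $(\Omega\times\mathbb R^m\times\mathbb R)\cap Q_{2r}(X_0,Y_0,t_0)$ for $r$ a fixed large multiple of $\varrho$, and that $w$ vanishes continuously on $(\partial\Omega\times\mathbb R^m\times\mathbb R)\cap Q_{2r}(X_0,Y_0,t_0)$. The first fact holds by Definition \ref{fund}~$(i)$ and the identity \eqref{ghh1---}, once $c$ is taken large enough that the pole $(X,Y,t)$ stays strictly in the future of $\overline{Q_{2r}(X_0,Y_0,t_0)}$; the boundary vanishing is standard Green function regularity and follows from \eqref{ghh1---}, Lemma \ref{lem_fsolbounds}, and the boundary decay estimates of Section \ref{sec1.5} applied to $\L_\K^\ast$ (cf. Lemma \ref{lem4.5-Kyoto1} and the remark following \eqref{PDEagg}). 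Throughout one uses that $A^{\pm}_{\varrho,\Lambda}(X_0,Y_0,t_0)$ are interior points with distance $\approx\varrho$ to $\partial\Omega\times\mathbb R^m\times\mathbb R$, and that after enlarging $c$ both lie in the subregions where the estimates below apply.

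For the first inequality, $G_\K(X,Y,t,A^{+}_{\varrho,\Lambda})\lesssim G_\K(X,Y,t,A^{-}_{\varrho,\Lambda})$ --- the easy half --- I would apply the adjoint version of the Carleson estimate Theorem \ref{thm:carleson} (with the roles of $A^{+}$ and $A^{-}$ interchanged, as in the remark following \eqref{PDEagg}) to $w$, at a scale comparable to $\varrho$ chosen so that $A^{+}_{\varrho,\Lambda}(X_0,Y_0,t_0)$ lies in the region to which that estimate applies; this yields $w(A^{+}_{\varrho,\Lambda})\lesssim w(A^{-}_{\varrho',\Lambda})$ for some $\varrho'\approx\varrho$. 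A Harnack chain of bounded length for $\L_\K^\ast$ (Lemma \ref{harnack}) joining $A^{-}_{\varrho',\Lambda}$ to $A^{-}_{\varrho,\Lambda}$, kept at distance $\gtrsim\varrho$ from the boundary and away from the pole, then upgrades this to $w(A^{+}_{\varrho,\Lambda})\lesssim w(A^{-}_{\varrho,\Lambda})$.

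The second inequality, $G_\K(X,Y,t,A^{-}_{\varrho,\Lambda})\lesssim G_\K(X,Y,t,A^{+}_{\varrho,\Lambda})$, is the hard half: it is a backward Harnack inequality for $\L_\K^\ast$. Since the directional Harnack inequality (Lemma \ref{harnack}) forces a non-negative adjoint solution to grow as $t$ decreases, while $A^{-}_{\varrho,\Lambda}$ sits at the earlier time $t_0-\varrho^2$ and $A^{+}_{\varrho,\Lambda}$ at the later time $t_0+\varrho^2$, no interior chain can produce this bound, and I expect this step to be the main obstacle. The plan is to route through the boundary. A direct computation with the group law \eqref{e70}--\eqref{e70+} gives the exact identity
\[
A^{-}_{\varrho,\Lambda}(X_0,Y_0,t_0)=A^{+}_{\varrho,\Lambda}(\hat X_0,\hat Y_0,\hat t_0),\qquad (\hat X_0,\hat Y_0,\hat t_0):=(X_0,Y_0,t_0)\circ\bigl(0,0,0,\tfrac{4}{3}\Lambda\varrho^3,-2\varrho^2\bigr),
\]
where $(\hat X_0,\hat Y_0,\hat t_0)\in(\partial\Omega\times\mathbb R^m\times\mathbb R)\cap\Delta_{c\varrho}(X_0,Y_0,t_0)$. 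Applying Lemma \ref{greenmeasurerelation} at $(\hat X_0,\hat Y_0,\hat t_0)$ (the hypothesis $t\ge t_0+c\varrho^2$ is more than enough, as $\hat t_0<t_0$), together with $\Delta_\varrho(\hat X_0,\hat Y_0,\hat t_0)\subset\Delta_{c\varrho}(X_0,Y_0,t_0)$, yields
\[
\varrho^{{\bf q}-2}G_\K(X,Y,t,A^{-}_{\varrho,\Lambda})\lesssim\omega_\K(X,Y,t,\Delta_\varrho(\hat X_0,\hat Y_0,\hat t_0))\lesssim\omega_\K(X,Y,t,\Delta_{c\varrho}(X_0,Y_0,t_0)).
\]
It then remains to establish the reverse Green function/measure estimate at $A^{+}$, namely $\omega_\K(X,Y,t,\Delta_{c\varrho}(X_0,Y_0,t_0))\lesssim\varrho^{{\bf q}-2}G_\K(X,Y,t,A^{+}_{\varrho,\Lambda})$, which combined with the last display gives the hard half. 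This reverse estimate is where the real work lies; I would prove it by combining the doubling property of Kolmogorov measure (Theorem \ref{thm:doub}) with the boundary comparison principle Theorem \ref{thm:quotients} and a dyadic iteration, following the template of the backward Harnack inequality for the heat equation. The remaining ingredients above are routine consequences of the interior and boundary estimates recorded in Section \ref{sec1.5} and in \cite{LN}.
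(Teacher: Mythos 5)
The paper does not prove Lemma \ref{greenestimate}; it is imported from \cite{LN} (``see in particular section 8 in \cite{LN}''), so there is no internal proof to compare against. I therefore assess your proposal on its own terms.

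Your algebraic observation is correct and genuinely nice: working through the group law \eqref{e70} confirms that $A^{-}_{\varrho,\Lambda}(X_0,Y_0,t_0)=A^{+}_{\varrho,\Lambda}(\hat X_0,\hat Y_0,\hat t_0)$ with $(\hat X_0,\hat Y_0,\hat t_0)=(X_0,Y_0,t_0)\circ(0,0,0,\tfrac43\Lambda\varrho^3,-2\varrho^2)$, that $\hat X_0=X_0\in\partial\Omega$, that the displacement has size $\approx\varrho$, and that $t\ge t_0+c\varrho^2$ is indeed more than enough for Lemma \ref{greenmeasurerelation} to apply at $(\hat X_0,\hat Y_0,\hat t_0)$. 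The display $\varrho^{\mathbf q-2}G_\K(X,Y,t,A^-_{\varrho,\Lambda})\lesssim\omega_\K(X,Y,t,\Delta_{c\varrho})$ then follows as you claim. The reduction of the hard half to the ``reverse estimate'' $\omega_\K(X,Y,t,\Delta_{c\varrho})\lesssim\varrho^{\mathbf q-2}G_\K(X,Y,t,A^+_{\varrho,\Lambda})$ is also correct.

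The problem is that this reverse estimate is not a simplification: it is equivalent to the lemma you are proving. Lemma \ref{greenmeasurerelation} only ever gives an \emph{upper} bound for $\omega_\K$ in terms of $G_\K$ at a \emph{backward} corkscrew $A^-$, so even granting a doubling inequality $\omega_\K(X,Y,t,\Delta_{c\varrho})\lesssim\omega_\K(X,Y,t,\Delta_{\varrho})$ one only reaches $\omega_\K(X,Y,t,\Delta_{c\varrho})\lesssim\varrho^{\mathbf q-2}G_\K(X,Y,t,A^-_{c\varrho,\Lambda})$, and passing from $G_\K(\cdot,A^-)$ to $G_\K(\cdot,A^+)$ is exactly the hard half again (the needed adjoint Harnack step goes the ``backward'' way). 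So invoking Theorem \ref{thm:doub} cannot close the gap as stated. There are two further difficulties with the route: (a) Theorem \ref{thm:doub} is only stated for the fixed reference pole $A^+_{c\varrho_0,\Lambda}(X_0,Y_0,t_0)$, not for an arbitrary $(X,Y,t)$ with $t\ge t_0+c\varrho^2$; transferring to the general pole via Theorem \ref{thm:quotients} is plausible, but that comparison only moves the difficulty, it does not resolve it; (b) in the standard development (cf.\ \cite{FGS,FSY} and the ordering in Section \ref{sec1.5}), the doubling of the caloric/Kolmogorov measure is \emph{derived from} the backward Harnack inequality for Green functions, so using Theorem \ref{thm:doub} here risks circularity. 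The phrase ``I would prove it by \dots following the template'' is a plan, not an argument, and the plan appears to rely on the conclusion.

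There is also a direction problem in the ``easy half.'' Applying the adjoint Carleson estimate at a scale large enough to contain $A^+_{\varrho,\Lambda}$ forces $\varrho'>\varrho$, so the endpoint is $w(A^+_{\varrho,\Lambda})\lesssim w(A^-_{\varrho',\Lambda})$ with $A^-_{\varrho',\Lambda}$ at an \emph{earlier} time $t_0-\varrho'^2<t_0-\varrho^2$. The adjoint Harnack inequality (Lemma \ref{harnack}) transports information from later regular time to earlier regular time, i.e.\ $w(\text{later})\lesssim w(\text{earlier})$, so a chain from $A^-_{\varrho',\Lambda}$ to $A^-_{\varrho,\Lambda}$ gives $w(A^-_{\varrho,\Lambda})\lesssim w(A^-_{\varrho',\Lambda})$ --- the wrong inequality. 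The easy half itself is true and should follow from an adjoint Harnack chain between $A^+_{\varrho,\Lambda}$ and $A^-_{\varrho,\Lambda}$ (both at height $\Lambda\varrho$), or by applying the adjoint version of Lemma \ref{lem4.7} to a smaller-scale forward corkscrew and then chaining from $A^+_{\varrho,\Lambda}$ down to that point (which does go in the correct adjoint-time direction); but your stated route does not do this. In summary: the algebraic reduction is correct and valuable, but the core backward Harnack step is left as a gap, and the tool you would invoke (Theorem \ref{thm:doub}) neither suffices nor is clearly available without circularity.
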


\begin{theorem}\label{thm:doub}  Let $\Omega$ and $A$ be as in Lemma \ref{lem4.5-Kyoto1}.  Then there exist
 $\Lambda=\Lambda(m,M)$, $1\leq \Lambda<\infty$,   $c=c(m,\kappa,M)$,  $1\leq c<\infty$, such that the following is true. Let $(X_0,Y_0,t_0)\in\partial\Omega\times\mathbb R^m\times\mathbb R$, $0<\varrho_0<\infty$. Then
\begin{eqnarray*}
\omega_\K\bigl (A_{c\varrho_0,\Lambda}^+(X_0,Y_0,t_0), \Delta_{2\varrho}(\tilde X_0,\tilde Y_0,\tilde t_0)\bigr )\lesssim\omega_\K\bigl (A_{c\varrho_0,\Lambda}^+(X_0,Y_0,t_0), \Delta_{\varrho}(\tilde X_0,\tilde Y_0,\tilde t_0)\bigr )
\end{eqnarray*}
for all $\Delta_{\varrho}(\tilde X_0,\tilde Y_0,\tilde t_0)$, $(\tilde X_0,\tilde Y_0,\tilde t_0)\in\partial\Omega\times\mathbb R^m\times\mathbb R$, such that $\Delta_{\varrho}(\tilde X_0,\tilde Y_0,\tilde t_0)\subset \Delta_{4\varrho_0}(X_0,Y_0,t_0)$.
\end{theorem}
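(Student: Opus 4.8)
The plan is to deduce the doubling property of $\omega_\K(Z,\cdot)$, where $Z:=A^+_{c\varrho_0,\Lambda}(X_0,Y_0,t_0)$ denotes the fixed pole, from a comparison of the (adjoint) Green function of $\L_\K$ at boundary reference points of comparable scales, in the spirit of the Caffarelli--Fabes--Mortola--Salsa argument for harmonic measure and its parabolic refinement by Fabes--Garofalo--Salsa. First I would fix $\Lambda=\Lambda(m,M)$ large enough that Lemma \ref{greenmeasurerelation}, Lemma \ref{greenestimate}, Theorem \ref{thm:carleson} and their adjoint versions all apply, and then choose $c=c(m,\kappa,M)$ \emph{last}, large compared with every constant occurring in those results and in \eqref{e-triangular}. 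If $\Delta_\varrho(\tilde X_0,\tilde Y_0,\tilde t_0)\subset\Delta_{4\varrho_0}(X_0,Y_0,t_0)$, then this inclusion forces $d((\tilde X_0,\tilde Y_0,\tilde t_0),(X_0,Y_0,t_0))\lesssim\varrho_0$ and $\varrho\lesssim\varrho_0$; since the $t$-coordinate of $Z$ equals $t_0+(c\varrho_0)^2$, for $c$ large one gets $d(Z,(\tilde X_0,\tilde Y_0,\tilde t_0))\approx c\varrho_0$ and $t_Z\ge\tilde t_0+c'\varrho'^2$ for every scale $\varrho'\lesssim\varrho_0$ occurring below. Hence the ``future in time'' hypotheses of Lemma \ref{greenmeasurerelation} and Lemma \ref{greenestimate} hold for pole $Z$ and base point $(\tilde X_0,\tilde Y_0,\tilde t_0)$ at all the scales we use, and $v:=G_\K(Z,\cdot)$ is a non-negative weak solution of $\L_\K^\ast v=0$ in $(\Omega\times\mathbb R^m\times\mathbb R)\cap Q_{2R}(\tilde X_0,\tilde Y_0,\tilde t_0)$, with $R\approx c\varrho_0$ taken small enough that $Z\notin Q_{2R}(\tilde X_0,\tilde Y_0,\tilde t_0)$, vanishing continuously on $(\partial\Omega\times\mathbb R^m\times\mathbb R)\cap Q_{2R}(\tilde X_0,\tilde Y_0,\tilde t_0)$ (a standard property of the Green function, see \cite{LN}).

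Second, I would pass from the measure to the Green function. Writing $c_0=c_0(m,\kappa,M)$ for the constant in Lemma \ref{greenmeasurerelation} and applying that lemma with base point $(\tilde X_0,\tilde Y_0,\tilde t_0)$ at the scales $2\varrho$ and $\varrho$,
\begin{align*}
\omega_\K(Z,\Delta_{2\varrho}(\tilde X_0,\tilde Y_0,\tilde t_0))&\lesssim(2\varrho)^{{\bf q}-2}G_\K(Z,A^-_{2c_0\varrho,\Lambda}(\tilde X_0,\tilde Y_0,\tilde t_0)),\\
\omega_\K(Z,\Delta_{\varrho}(\tilde X_0,\tilde Y_0,\tilde t_0))&\gtrsim\varrho^{{\bf q}-2}G_\K(Z,A^+_{\varrho,\Lambda}(\tilde X_0,\tilde Y_0,\tilde t_0)).
\end{align*}
Since $(2\varrho)^{{\bf q}-2}\approx\varrho^{{\bf q}-2}$, the asserted inequality $\omega_\K(Z,\Delta_{2\varrho})\lesssim\omega_\K(Z,\Delta_\varrho)$ reduces to showing $G_\K(Z,A^-_{2c_0\varrho,\Lambda}(\tilde X_0,\tilde Y_0,\tilde t_0))\lesssim G_\K(Z,A^+_{\varrho,\Lambda}(\tilde X_0,\tilde Y_0,\tilde t_0))$.

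Third, this Green function comparison is the heart of the matter, and it is here that the forward/backward time structure enters: one cannot pass from the ``past'' reference point $A^-_{2c_0\varrho,\Lambda}$ to the ``future'' reference point $A^+_{\varrho,\Lambda}$ by a bounded interior Harnack chain because of the time-lag in the Kolmogorov Harnack inequality, Lemma \ref{harnack}. Instead I would introduce an intermediate scale $\varrho_1\approx c_0\Lambda\varrho$, chosen so that both points lie in $Q_{2\varrho_1/c}(\tilde X_0,\tilde Y_0,\tilde t_0)$, and combine, for $v=G_\K(Z,\cdot)$: the adjoint Carleson estimate (Theorem \ref{thm:carleson} with the roles of $A^+$ and $A^-$ reversed) at scale $\varrho_1$, which yields $G_\K(Z,A^-_{2c_0\varrho,\Lambda})\lesssim G_\K(Z,A^-_{\varrho_1,\Lambda})$; Lemma \ref{greenestimate} at scale $\varrho_1$, which gives $G_\K(Z,A^-_{\varrho_1,\Lambda})\approx G_\K(Z,A^+_{\varrho_1,\Lambda})$; and the adjoint lower bound of Lemma \ref{lem4.7} (again with $A^+$, $A^-$ reversed) at scale $\varrho_1$, evaluated at $A^+_{\varrho,\Lambda}$ whose distance to $\partial\Omega\times\mathbb R^m\times\mathbb R$ is $\approx\Lambda\varrho\approx\varrho_1/c_0$, which gives $G_\K(Z,A^+_{\varrho_1,\Lambda})\lesssim G_\K(Z,A^+_{\varrho,\Lambda})$. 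Chaining these three estimates gives $G_\K(Z,A^-_{2c_0\varrho,\Lambda})\lesssim G_\K(Z,A^+_{\varrho,\Lambda})$, and hence the theorem.

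The main obstacle is precisely this third step: the parabolic/Kolmogorov waiting time obstructs a direct Harnack-chain comparison of the Green function at the two reference points, which is why the dichotomy between the forward point $A^+_{\varrho,\Lambda}$ and the backward point $A^-_{\varrho,\Lambda}$, and the non-trivial equivalence recorded in Lemma \ref{greenestimate} --- whose own proof already encodes the passage across this time-lag --- are indispensable, and why the auxiliary scale $\varrho_1$ must be inserted to reconcile the scales $\varrho$ and $2c_0\varrho$. A secondary, bookkeeping point is the choice of the large constant $c=c(m,\kappa,M)$ in the pole $A^+_{c\varrho_0,\Lambda}(X_0,Y_0,t_0)$: it must be fixed once and for all so that $Z$ is far from, and strictly in the future of, every admissible $(\tilde X_0,\tilde Y_0,\tilde t_0)$ and so that each application of Lemma \ref{greenmeasurerelation}, Lemma \ref{greenestimate}, Theorem \ref{thm:carleson} and Lemma \ref{lem4.7} at the scales $\varrho$, $2\varrho$ and $\varrho_1\lesssim\varrho_0$ is legitimate. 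Alternatively one could route the argument through the boundary Harnack principle Theorem \ref{thm:quotients} applied to $G_\K(Z,\cdot)$ against a second Green function, but the path above through Lemma \ref{greenmeasurerelation} and Lemma \ref{greenestimate} is the most economical.
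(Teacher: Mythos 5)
The paper does not actually prove Theorem \ref{thm:doub}; it is quoted as one of the main results of \cite{LN} (``Theorem \ref{thm:doub} stated below is one of the main results in \cite{LN}''), and the authors rely on it as a black box. So there is no in-paper proof to compare yours against. What I can say is that your reconstruction follows the natural Caffarelli--Fabes--Mortola--Salsa template adapted to the Kolmogorov setting, and the three ingredients you invoke --- Lemma \ref{greenmeasurerelation} to convert the measure inequality into a Green function inequality, the adjoint version of Theorem \ref{thm:carleson} together with the adjoint version of the lower bound in Lemma \ref{lem4.7} to collapse the scales $2c_0\varrho$ and $\varrho$ down to a common intermediate scale $\varrho_1\approx C\Lambda\varrho$, and Lemma \ref{greenestimate} to cross the time-lag from $A^-_{\varrho_1,\Lambda}$ to $A^+_{\varrho_1,\Lambda}$ --- are exactly the tools one expects to need, and the chain of inequalities you assemble does close up to the asserted doubling estimate. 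In particular, you correctly flag where the Kolmogorov time-lag bites and that this is precisely what Lemma \ref{greenestimate} is designed to handle; an unstructured Harnack chain from the backward corkscrew to the forward one would fail.

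Two minor points of bookkeeping you should make explicit if you write this out in full. First, the intermediate scale should be $\varrho_1\approx C\Lambda\varrho$ with $C$ large enough that both $A^-_{2c_0\varrho,\Lambda}$ and $A^+_{\varrho,\Lambda}$ land in $Q_{2\varrho_1/c}(\tilde X_0,\tilde Y_0,\tilde t_0)$ and so that the factor $(d/\varrho_1)^\gamma$ in the adjoint Lemma \ref{lem4.7} is bounded below; $C$ depends on the constants of Theorem \ref{thm:carleson}, Lemma \ref{lem4.7}, and $\Lambda$, but --- and this is the crucial order-of-quantifiers point you already note --- $C$ must not depend on the constant in the pole $A^+_{c\varrho_0,\Lambda}(X_0,Y_0,t_0)$, which is fixed \emph{afterwards}. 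Second, one should verify that $\varrho\lesssim\varrho_0$ follows from the hypothesis $\Delta_\varrho(\tilde X_0,\tilde Y_0,\tilde t_0)\subset\Delta_{4\varrho_0}(X_0,Y_0,t_0)$ only up to the comparison constants of the quasi-distance \eqref{e-ps.distint}, so the final choice of $c$ must absorb those constants as well. With those caveats spelled out, your argument is a plausible reconstruction of the proof given in \cite{LN}.
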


\section{Proof of the structural theorems: Theorem \ref{Ainfty} and Theorem \ref{Ainfty+}}\label{sec2}

The purpose of the section is to prove  Theorem \ref{Ainfty} and Theorem \ref{Ainfty+}. Throughout the section we assume \eqref{keyassump}. Let $\omega_{\mathcal{E}}$,  $\omega_{\mathcal{P}}$,  and $\omega_{\mathcal{K}}$ be as in the statement of Theorem \ref{Ainfty}.

\subsection{Proof of Theorem \ref{Ainfty}} To prove Theorem \ref{Ainfty} we need to prove that there exist
 $\Lambda=\Lambda(m,M)$, $1\leq \Lambda<\infty$,   $c=c(m,\kappa,M)$,  $1\leq c<\infty$ such that if $\Delta:=\Delta_r(X_0,Y_0,t_0)\subset \partial\Omega\times \mathbb R^m\times \mathbb R$, then the estimates stated in the theorems hold  whenever $\tilde \Delta\subset \Delta$. The proof  of Theorem \ref{Ainfty} is based on the relation between $\omega_{\E}$, $\omega_{\P}$, $\omega_{\K}$ and the corresponding Green functions and boundary Harnack inequalities.

To start the proof we first note that an immediate consequence of Lemma \ref{greenmeasurerelation} is that there exists $c=c(m,\kappa,M)$, $1\leq c<\infty$, such that given $\Delta:=\Delta_r(X_0,Y_0,t_0)\subset \partial\Omega\times \mathbb R^m\times \mathbb R$, we have
\begin{eqnarray}\label{gensalsa-a}
 \tilde r^{{\bf q}-2} G_{\K}(A_{c\Delta,\Lambda}^+, A_{\tilde\Delta,\Lambda}^+)\lesssim\omega_{\K}(A_{c\Delta,\Lambda}^+, \tilde\Delta)\lesssim\tilde r^{{\bf q}-2}G_{\K}(A_{c\Delta,\Lambda}^+, A_{c\tilde\Delta,\Lambda}^-),
\end{eqnarray}
whenever $\tilde\Delta=\Delta_{\tilde r} \subset \Delta$.
Using this, and the corresponding results for  $\L_\E$ and $\L_\P$, see \cite{Ke} and \cite{FS, FSY, FGS, N}, we obtain
\begin{equation}\label{corr}
\begin{split}
 \frac { G_{\E}(\pi_X(A_{c\Delta,\Lambda}^+), \pi_X(A_{\tilde\Delta,\Lambda}^+))}{G_{\K}(A_{c\Delta,\Lambda}^+, A_{c\tilde\Delta,\Lambda}^-)}&\lesssim\frac {\sigma_\K(\tilde\Delta)\omega_{\E}(\pi_{X}(A_{c\Delta,\Lambda}^+),\pi_{X}(\tilde \Delta))}{\sigma_\E(\pi_{X}(\tilde \Delta))\omega_{\K}(A_{c\Delta,\Lambda}^+,\tilde \Delta)}\\
 &\lesssim\frac {G_{\E}(\pi_X(A_{c\Delta,\Lambda}^+), \pi_X(A_{c\tilde\Delta,\Lambda}^-))}{G_{\K}(A_{c\Delta,\Lambda}^+, A_{\tilde\Delta,\Lambda}^+)},
 \end{split}
\end{equation}
and
\begin{align*}
 \frac { G_{\P}(\pi_{X,t}(A_{c\Delta,\Lambda}^+), \pi_{X,t}(A_{\tilde\Delta,\Lambda}^+))}{G_{\K}(A_{c\Delta,\Lambda}^+, A_{c\tilde\Delta,\Lambda}^-)}&\lesssim\frac {\sigma_\K(\tilde\Delta)\omega_{\P}(\pi_{X,t}(A_{c\Delta,\Lambda}^+),\pi_{X,t}(\tilde \Delta))}{\sigma_\P(\pi_{X,t}(\tilde \Delta))\omega_{\K}(A_{c\Delta,\Lambda}^+,\tilde \Delta)}\\
 &\lesssim\frac {G_{\P}(\pi_{X,t}(A_{c\Delta,\Lambda}^+), \pi_{X,t}(A_{c\tilde\Delta,\Lambda}^-))}{G_{\K}(A_{c\Delta,\Lambda}^+, A_{\tilde\Delta,\Lambda}^+)}.
\end{align*}
To this end we will now prove the theorem  only for  $\omega_\mathcal{K}$, the proof for  $\omega_\mathcal{P}$ being analogous. We first relate $G_{\K}(A_{c\Delta,\Lambda}^+, A_{c\tilde\Delta,\Lambda}^-)$ and $G_{\K}(A_{c\Delta,\Lambda}^+, A_{\tilde\Delta,\Lambda}^+)$. Using that $G_{\K}(A_{c\Delta,\Lambda}^+, \cdot,\cdot,\cdot) $ solves the adjoint equation we can apply the adjoint version of Lemma \ref{lem4.7} to conclude that
\begin{align*}
G_{\K}(A_{c\Delta,\Lambda}^+, A_{\tilde\Delta,\Lambda}^+)&\gtrsim  G_{\K}(A_{c\Delta,\Lambda}^+, A_{c\tilde\Delta,\Lambda}^+),\\ G_{\K}(A_{c\Delta,\Lambda}^+, A_{c\tilde\Delta,\Lambda}^-)&\gtrsim  G_{\K}(A_{c\Delta,\Lambda}^+, A_{\tilde\Delta,\Lambda}^-).
\end{align*}
Hence
\begin{eqnarray}\label{eq:Gquotrels1}
\frac {G_{\K}(A_{c\Delta,\Lambda}^+, A_{c\tilde\Delta,\Lambda}^+)}{G_{\K}(A_{c\Delta,\Lambda}^+, A_{c\tilde\Delta,\Lambda}^-)}\lesssim\frac {G_{\K}(A_{c\Delta,\Lambda}^+, A_{\tilde\Delta,\Lambda}^+)}{G_{\K}(A_{c\Delta,\Lambda}^+, A_{c\tilde\Delta,\Lambda}^-)}\lesssim \frac {G_{\K}(A_{c\Delta,\Lambda}^+, A_{\tilde\Delta,\Lambda}^+)}{G_{\K}(A_{c\Delta,\Lambda}^+, A_{\tilde\Delta,\Lambda}^-)}.
\end{eqnarray}
Therefore, applying Lemma \ref{greenestimate} twice,
\begin{eqnarray}\label{claim1}
\frac {G_{\K}(A_{c\Delta,\Lambda}^+, A_{\tilde\Delta,\Lambda}^+)}{G_{\K}(A_{c\Delta,\Lambda}^+, A_{c\tilde\Delta,\Lambda}^-)}\approx 1.
\end{eqnarray}
Furthermore, by the standard elliptic Harnack inequality
\begin{eqnarray}\label{claim1+}
G_{\E}(\pi_X(A_{c\Delta,\Lambda}^+), \pi_X(A_{\tilde\Delta,\Lambda}^+))\approx G_{\E}(\pi_X(A_{c\Delta,\Lambda}^+), \pi_X(A_{c\tilde\Delta,\Lambda}^-)).
\end{eqnarray}
Putting \eqref{corr}-\eqref{claim1+} together  we can conclude that
\begin{eqnarray}\label{claim1++}
\frac {\sigma_\K(\tilde\Delta)\omega_{\E}(\pi_{X}(A_{c\Delta,\Lambda}^+),\pi_{X}(\tilde \Delta))}{\sigma_\E(\pi_{X}(\tilde \Delta))\omega_{\K}(A_{c\Delta,\Lambda}^+,\tilde \Delta)}\approx \frac {G_{\E}(\pi_X(A_{c\Delta,\Lambda}^+), \pi_X(A_{\tilde\Delta,\Lambda}^+))}{G_{\K}(A_{c\Delta,\Lambda}^+, A_{\tilde\Delta,\Lambda}^+)}.
\end{eqnarray}
Next, using  Theorem \ref{thm:quotients}
\begin{eqnarray*}
\frac {G_{\E}(\pi_X(A_{c\Delta,\Lambda}^+), \pi_X(A_{\tilde\Delta,\Lambda}^+))}{G_{\K}(A_{c\Delta,\Lambda}^+, A_{\tilde\Delta,\Lambda}^+)}\approx\frac {G_{\E}(\pi_X(A_{c\Delta,\Lambda}^+), \pi_X(A_{\Delta,\Lambda}^+))}{G_{\K}(A_{c\Delta,\Lambda}^+, A_{\Delta,\Lambda}^+)}.
\end{eqnarray*}
Furthermore, $G_{\E}(\pi_X(A_{c\Delta,\Lambda}^+), \pi_X(A_{\Delta,\Lambda}^+))\approx r^{-m}\approx (r\sigma_\E(\pi_{X}(\Delta)))^{-1}$ by classical estimates for the fundamental solution second order elliptic equations in divergence form, see \cite{Ke}. We claim that
\begin{eqnarray}\label{clam}
G_{\K}(A_{c\Delta,\Lambda}^+, A_{\Delta,\Lambda}^+)\approx r^{2-{\bf q}}\approx (r \sigma_\K(\Delta))^{-1}.
\end{eqnarray}
To prove this we first note that the upper bound on $G_{\K}(A_{c\Delta,\Lambda}^+, A_{\Delta,\Lambda}^+)$ follows from Lemma \ref{lem_fsolbounds}. The proof of the lower bound on $G_{\K}(A_{c\Delta,\Lambda}^+, A_{\Delta,\Lambda}^+)$ is a bit more subtle but can be achieved analogously to the proof of the estimate in display (9.11) in \cite{LN}. Using \eqref{clam}, we deduce
\begin{eqnarray*}
\frac {G_{\E}(\pi_X(A_{c\Delta,\Lambda}^+), \pi_X(A_{\tilde\Delta,\Lambda}^+))}{G_{\K}(A_{c\Delta,\Lambda}^+, A_{\tilde\Delta,\Lambda}^+)}\approx\frac {G_{\E}(\pi_X(A_{c\Delta,\Lambda}^+), \pi_X(A_{\Delta,\Lambda}^+))}{G_{\K}(A_{c\Delta,\Lambda}^+, A_{\Delta,\Lambda}^+)}\approx \frac {\sigma_\K(\Delta)}{\sigma_\E(\pi_{X}(\Delta))}.
\end{eqnarray*}
Combing this with \eqref{claim1++},
\begin{eqnarray*}
\frac {\sigma_\E(\pi_{X}(\Delta))\sigma_\K(\tilde\Delta)\omega_{\E}(\pi_{X}(A_{c\Delta,\Lambda}^+),\pi_{X}(\tilde \Delta))}{\sigma_\K(\Delta)\sigma_\E(\pi_{X}(\tilde \Delta))\omega_{\K}(A_{c\Delta,\Lambda}^+,\tilde \Delta)}\approx 1.
\end{eqnarray*}
This proves  Theorem \ref{Ainfty}.

\subsection{Proof of  Theorem \ref{Ainfty+}}  Again we will only prove the theorem  for $\omega_\mathcal{K}$, the proof for $\omega_\mathcal{P}$ being analogous. Assume that $\omega_{\E}(\pi_X(A_{c\Delta,\Lambda}^+),\cdot)$ is mutually absolutely continuous on
 $\pi_X(\Delta)$  with respect to $\sigma_\E$ and that the associated Poisson kernel $K_\E (X):=K_\E (\pi_{X}(A_{c\Delta,\Lambda}^+),X)$ satisfies
 $$K_\E \in B_q(\pi_X(\Delta),\d\sigma_\E)$$ for some $q$, $1<q<\infty$, and with constant $\Gamma$, $1\leq\Gamma<\infty$.  To prove Theorem \ref{Ainfty+} for $\omega_\mathcal{K}$ we have to prove that  $\omega_{\K}(A_{c\Delta,\Lambda}^+,\cdot)$ is mutually absolutely continuous on $\Delta$ with respect to  $\sigma_\K$, and that the associated Poisson kernel
 $K_\K (X,Y,t):=K_\K (A_{c\Delta,\Lambda}^+,X,Y,t)$ satisfies $ K_\K \in B_q(\Delta,\d\sigma_\K)$ with a constant $\tilde\Gamma$, $\tilde\Gamma=\tilde\Gamma(m,\kappa,M,\Gamma)$.

 Let $\d\mu_\K:=\omega_{\E}(\pi_X(A_{c\Delta,\Lambda}^+),\pi_X(\cdot))\d Y\d t$. To prove that $\omega_{\K}(A_{c\Delta,\Lambda}^+,\cdot)$ is absolutely continuous on $\Delta$ with respect to  $\sigma_\K$ it suffices to prove that $\omega_{\K}(A_{c\Delta,\Lambda}^+,\cdot)\ll \mu_\K$ on  $\Delta$ and
 that  $\mu_\K\ll \sigma_\K$ on $\Delta$. Recall that $\d\sigma_\K(X,Y,t)=\d\sigma_\E(X)\d Y\d t$. However,
 as $\mu_\K$ and $\sigma_\K$ are defined through the stated product structure it  follows immediately that $\mu_\K\ll\sigma_\K$ on $\Delta$ as
 $\omega_{\E}(\pi_X(A_{c\Delta,\Lambda}^+),\cdot)\ll \sigma_\E$ on $\pi_X(\Delta)$. In particular, by the assumptions it suffices to prove that $\omega_{\K}(A_{c\Delta,\Lambda}^+,\cdot)$ is absolutely continuous on $\Delta$ with respect to  $\mu_\K$ and we will do this by using  Theorem \ref{Ainfty}.

 Recall that we previously observed that $(\Sigma,d,\d\sigma_\K)$, where $\Sigma$ was introduced in \eqref{space},  is a space of homogeneous type in the sense of \cite{CW}. By the results in \cite{Ch} there exists what we here will refer to as a dyadic grid on
 $\Sigma$ having a number of important properties in relation to $d$. To formulate this we introduce, for any $(X,Y,t)\in\Sigma$ and
$E\subset \Sigma$,
\begin{equation}
   {\rm dist} ((X,Y,t),E):=\inf \{ d((X,Y,t),(\tilde X,\tilde Y,\tilde t)) \mid (\tilde X,\tilde Y,\tilde t)\in E\},
\end{equation}
and we let
\begin{equation}
    \diam(E):=\sup \{ d((X,Y,t),(\tilde X,\tilde Y,\tilde t)) \mid (X,Y,t),\ (\tilde X,\tilde Y,\tilde t)\in E\}.
\end{equation}
Using  \cite{Ch} we can conclude that there exist
constants $ \alpha>0,\, \beta>0$ and $c_*<\infty$,  such that for each $k \in \mathbb{Z}$
there exists a collection of Borel sets, $\mathbb{D}_k$,  which we will call cubes, such that
$$
\mathbb{D}_k:=\{Q_{j}^k\subset\Sigma\mid j\in \mathfrak{I}_k\},$$ where
$\mathfrak{I}_k$ denotes some  index set depending on $k$, satisfying
\begin{eqnarray}\label{cubes}
(i)&&\mbox{$\Sigma=\cup_{j}Q_{j}^k\,\,$ for each
$k\in{\mathbb Z}$.}\notag\\
(ii)&&\mbox{If $m\geq k$ then either $Q_{i}^{m}\subset Q_{j}^{k}$ or
$Q_{i}^{m}\cap Q_{j}^{k}=\emptyset$.}\notag\\
(iii)&&\mbox{For each $(j,k)$ and each $m<k$, there is a unique
$i$ such that $Q_{j}^k\subset Q_{i}^m$.}\notag\\
(iv)&&\mbox{$\diam\big(Q_{j}^k\big)\leq c_* 2^{-k}$.}\notag\\
(v)&&\mbox{Each $Q_{j}^k$ contains $\Sigma\cap \mathcal{B}_{\alpha2^{-k}}(X^k_{j}, Y^k_{j},t^k_{j})$ for some $(X^k_{j}, Y^k_{j},t^k_j)\in\Sigma$.}\notag\\
(vi)&&\mbox{$\sigma_\K(\{(X,Y,t)\in Q^k_j:{\rm dist}((X,Y,t),\Sigma\setminus Q^k_j)\leq \varrho \,2^{-k}\big\})\leq
c_*\,\varrho^\beta\,\sigma_\K (Q^k_j),$}\notag\\
&&\mbox{for all $k,j$ and for all $\varrho\in (0,\alpha)$.}
\end{eqnarray}
 We shall denote by  $\mathbb{D}=\mathbb{D}(\Sigma)$ the collection of all
$Q^k_j$, i.e. $$\mathbb{D} := \cup_{k} \mathbb{D}_k.$$
Note that \eqref{cubes} $(iv)$ and $(v)$ imply that for each cube $Q\in\mathbb{D}_k$,
there is a point $(X_Q,Y_Q,t_Q)\in \Sigma$, and  a cube  $Q_{r}(X_Q,Y_Q,t_Q)$ such that
$r\approx 2^{-k} \approx {\rm diam}(Q)$
and \begin{equation}\label{cube-ball}
\Delta_{r}(X_Q,Y_Q,t_Q)\subset Q \subset \Delta_{cr}(X_Q,Y_Q,t_Q),\end{equation}
for some uniform constant $c$. We let
\begin{equation}\label{cube-ball2}
\Delta_Q:= \Delta_{r}(X_Q,Y_Q,t_Q),\end{equation}
and we shall refer to the point $(X_Q,Y_Q,t_Q)$ as the center of $Q$. Given a dyadic cube $Q\subset\Sigma$, we define its $\gamma$ dilate  by
\begin{equation}\label{dilatecube}
\gamma Q:= \Delta_{\gamma \diam(Q)}(X_Q,Y_Q,t_Q).
\end{equation}
For a dyadic cube $Q\in \mathbb{D}_k$, we let $\ell(Q) = 2^{-k}$, and we shall refer to this quantity as the length
of $Q$.  Clearly, $\ell(Q)\approx \diam(Q).$

We now prove that $\omega_{\K}(A_{c\Delta,\Lambda}^+,\cdot)$ is absolutely continuous on $\Delta$ with respect to  $\mu_\K$ using  Theorem \ref{Ainfty}. Indeed, let $E\subset \Delta$ and $\delta>0$, and let $\{Q_j\}$ be a (finite) dyadic Vitali covering of
$E$ such that
$$\mu_\K(\cup Q_j)<\mu_\K(E)+\delta,$$
and such that $\gamma Q_i\cap \gamma Q_j=\emptyset$ for some small $\gamma=\gamma(m,M)>0$ whenever $i\neq j$. Using
Theorem \ref{Ainfty} and the doubling property of  $\omega_{\E}(\pi_X(A_{c\Delta,\Lambda}^+),\cdot)$ we see that
\begin{equation}
\begin{split}
\omega_{\K}(A_{c\Delta,\Lambda}^+, Q_j)&\leq \omega_{\K}(A_{c\Delta,\Lambda}^+, \Delta_{cQ_j})\\
&\lesssim \omega_{\E}(\pi_{X}(A_{c\Delta,\Lambda}^+),\pi_{X}(\Delta_{cQ_j}))\ell(cQ_j)^{3m+2}\lesssim \mu_\K(\gamma Q_j),
\end{split}
\end{equation}
where now the implicit constants may depend on $|\Delta|$, which is fixed.
Hence
\begin{equation}
\begin{split}
\omega_{\K}(A_{c\Delta,\Lambda}^+, E)&\leq \sum_j \omega_{\K}(A_{c\Delta,\Lambda}^+, Q_j)\\
&\lesssim \sum_j \mu_\K(\gamma Q_j)\lesssim\mu_\K(\cup Q_j)\lesssim(\mu_\K(E)+\delta).
\end{split}
\end{equation}
In particular, given $\epsilon >0$ there exists $\delta=\delta(m,\kappa,M,\epsilon,|\Delta|)>0$ such that if $E\subset \Delta$, and if
$\mu_\K(E)<\delta$, then $\omega_{\K}(A_{c\Delta,\Lambda}^+, E)<\epsilon$, proving that $\omega_{\K}(A_{c\Delta,\Lambda}^+, \cdot)\ll \mu_K$.

By the above we can conclude that $\omega_{\K}(A_{c\Delta,\Lambda}^+, \cdot)\ll \sigma_\K$ on $\Delta$ and that
 \begin{eqnarray*}
K_\K (A_{c\Delta,\Lambda}^+,X,Y,t):=\frac {\d\omega_{\K}}{\d\sigma_\K}(A_{c\Delta,\Lambda}^+,X,Y,t))=\lim_{{\tilde r}\to 0}\frac {\omega_{\K}(A_{c\Delta,\Lambda}^+, \Delta_{\tilde r}(X,Y,t))}{\sigma_\K(\Delta_{\tilde r}(X,Y,t))}\end{eqnarray*}
  exists and is well-defined for $\sigma_\K$-almost every $(X,Y,t)\in\Delta$. Using Theorem \ref{Ainfty}
\begin{equation}\label{kernelrels}
\begin{split}
    \sigma_\K(\Delta)K_\K (A_{c\Delta,\Lambda}^+,X,Y,t)&\approx \sigma_\P(\pi_{X,t}(\Delta))K_\P (\pi_{X,t}(A_{c\Delta,\Lambda}^+),X,t)\\
    &\approx \sigma_\E(\pi_{X}(\Delta)) K_\E (\pi_{X}(A_{c\Delta,\Lambda}^+),X),
\end{split}
\end{equation}
whenever $(X,Y,t)\in \Delta$. Using the assumption on $K_\E (X)=K_\E (\pi_{X}(A_{c\Delta,\Lambda}^+),X)$, and \eqref{kernelrels}, it follows that
 $K_\K (X,Y,t):=K_\K (A_{c\Delta,\Lambda}^+,X,Y,t)$ satisfies $$K_\K \in B_q(\Delta,\d\sigma_\K)$$ with a constant $\tilde\Gamma$, $\tilde\Gamma=\tilde\Gamma(m,\kappa,M,\Gamma)$.  This completes the proof of Theorem \ref{Ainfty+}.

\section{The $L^p$ Dirichlet problem for $\L_\K$: Theorem \ref{DPequiv}}\label{sec3}

Recall the notation $\Sigma$ introduced in \eqref{space}.  Given $f\in L^1_{\mbox{loc}}(\Sigma,\d\sigma_\K)$, we let
$$M(f)(X,Y,t):=\sup_{\Delta_r=\Delta_r(X,Y,t)\subset\Sigma}\bariiint_{\Delta_r} |f|\, \d\sigma_\K,$$
denote the Hardy-Littlewood maximal function of $f$, with respect to $\sigma_\K$. In the following we assume that  $\omega_{\K}(A_{c\Delta,\Lambda}^+,\cdot)$ is mutually absolutely continuous on $\Delta$ with respect to  $\sigma_\K$ for every $\Delta:=\Delta_r(X_0,Y_0,t_0)\subset \partial\Omega\times \mathbb R^m\times \mathbb R$.

We first prove that $(i)$ implies $(ii)$ and
hence we assume, given $\Delta\subset \partial\Omega\times \mathbb R^m\times \mathbb R$, that $K_\K (A_{c\Delta,\Lambda}^+,\cdot,\cdot,\cdot)\in B_q(\Delta,\d\sigma_\K)$. As   $\omega_\K$ is a doubling measure we can use the classical results of Coifman-Fefferman, see Theorem IV in \cite{CF}, to conclude that
$K_\K (A_{c\Delta,\Lambda}^+,\cdot,\cdot,\cdot)\in B_{\tilde q}(\Delta,\d\sigma_\K)$ for some $\tilde q>q$ independent of $\Delta$. Let $\tilde p$ be the index dual to $\tilde q$ and note that $\tilde p<p$.

Consider first
$f \in C_0(\partial \Omega\times \mathbb R^m\times \mathbb R)$.  Let $(X_0,Y_0,t_0)\in\partial\Omega\times \mathbb R^m\times \mathbb R$, and recall the (non-tangential) cone $\Gamma^\eta(X_0,Y_0,t_0)$.  Let $(\hat X,\hat Y,\hat t) \in \Gamma^\eta(X_0,Y_0,t_0)$ and let $\delta:=d((\hat X,\hat Y,\hat t), (X_0,Y_0,t_0))$. Then, by Theorem \ref{EUDP} we know that there exists a unique bounded weak solution to $\L_\K u=0$ in $\Omega\times \mathbb R^m\times \mathbb R$ with $u=f$ on
$\partial\Omega\times \mathbb R^m\times \mathbb R$. Furthermore,
$$u(\hat X,\hat Y,\hat t)=\iiint_{\partial\Omega}K_\K(\hat X,\hat Y,\hat t, X, Y, t)f( X, Y, t)\, \d\sigma_\K( X, Y, t).$$
We write
\begin{align*}
u(\hat X,\hat Y,\hat t)&=\iiint_{\Delta_{4\delta}(X_0,Y_0,t_0)}K_\K(\hat X,\hat Y,\hat t, X, Y, t)f( X, Y, t)\, \d\sigma_\K( X, Y, t)\notag\\
& \quad +\sum_{j=2}^\infty \iiint_{R_j(X_0,Y_0,t_0)}K_\K(\hat X,\hat Y,\hat t, X, Y, t)f( X, Y, t)\, \d\sigma_\K( X, Y, t)\notag\\
&=:u_1(\hat X,\hat Y,\hat t)+\sum_{j=2}^\infty u_j(\hat X,\hat Y,\hat t),
\end{align*}
where $R_j(X_0,Y_0,t_0):=\Delta_{2^{j+1}\delta}(X_0,Y_0,t_0)\setminus \Delta_{2^{j}\delta}(X_0,Y_0,t_0)$.  Using
\begin{eqnarray}\label{kerdef}
K_\K (\hat X,\hat Y,\hat t,X,Y,t)=\frac {\d\omega_{\K}}{\d\sigma_\K}(\hat X,\hat Y,\hat t,X,Y,t))=\lim_{{\tilde r}\to 0}\frac {\omega_{\K}(\hat X,\hat Y,\hat t, \Delta_{\tilde r}(X,Y,t))}{\sigma_\K(\Delta_{\tilde r}(X,Y,t))},\end{eqnarray}
in combination with Theorem \ref{thm:carleson}, we see that
\begin{align*}
K_\K(\hat X,\hat Y,\hat t, X, Y, t)\lesssim K_\K(A_{c\Delta_{4\delta},\Lambda}^+, X, Y, t),
\end{align*}
whenever $( X, Y, t)\in \Delta_{4\delta}(X_0,Y_0,t_0)$, and where $\Delta_{4\delta}:=\Delta_{4\delta}(X_0,Y_0,t_0)$. Hence, using Cauchy-Schwarz,
\begin{align*}
|u_1(\hat X,\hat Y,\hat t)|
&\leq \sigma_\K(\Delta_{4\delta})\biggl (\bariiint_{\Delta_{4\delta}}| K_\K(A_{c\Delta_{4\delta},\Lambda}^+, X, Y, t)|^{\tilde q}\d\sigma_\K\biggr )^{1/\tilde q}(M(|f|^{\tilde p})(X_0,Y_0,t_0))^{1/\tilde p}\notag\\
&\leq c \omega_\K(A_{c\Delta_{4\delta},\Lambda}^+,\Delta_{4\delta})(M(|f|^{\tilde p})(X_0,Y_0,t_0))^{1/\tilde p}\\
&\leq c(M(|f|^{\tilde p})(X_0,Y_0,t_0))^{1/\tilde p}
\end{align*}
by $(i)$. Similarly, using also Lemma \ref{lem4.5-Kyoto1} we have
\begin{align*}
K_\K(\hat X,\hat Y,\hat t, X, Y, t)\lesssim 2^{-\alpha j}  K_\K(A_{c\Delta_{2^j\delta},\Lambda}^+, X, Y, t)
\end{align*}
whenever $( X, Y, t)\in R_j(X_0,Y_0,t_0)$. Using this estimate, and essentially just repeating the estimates  conducted in the estimate of $u_1$, we deduce that
\begin{align*}
|u_j(\hat X,\hat Y,\hat t)|\leq  c2^{-\alpha j}\big(M(|f|^{\tilde p})(X_0,Y_0,t_0)\big)^{1/\tilde p}.
\end{align*}
In particular,
\begin{align*}
|u(\hat X,\hat Y,\hat t)|\leq |u_1(\hat X,\hat Y,\hat t)|+\sum_{j=2}^\infty |u_j(\hat X,\hat Y,\hat t)|\leq c\big(M(|f|^{\tilde p})(X_0,Y_0,t_0)\big)^{1/\tilde p},
\end{align*}
and hence
\begin{equation*}
N(u)(X_0,Y_0,t_0)\leq c\big(M(|f|^{\tilde p})(X_0,Y_0,t_0)\big)^{1/\tilde p}.
\end{equation*}
We can conclude that
\begin{align}\label{impest}
\|N(u)\|_{L^{p}(\partial \Omega\times \mathbb R^m\times \mathbb R,\d\sigma_\K)}&\leq c\|(M(|f|^{\tilde p}))^{1/\tilde p}\|_{L^{p}(\partial \Omega\times \mathbb R^m\times \mathbb R,\d\sigma_\K)}\notag\\
&\leq c \|f\|_{L^{p}(\partial \Omega\times \mathbb R^m\times \mathbb R,\d\sigma_\K)},
\end{align}
by the continuity of the Hardy-Littlewood maximal function and where the constant $c$ depends only on $(m,\kappa,M,p)$.  We now remove the restriction that $f \in C_0(\partial \Omega\times \mathbb R^m\times \mathbb R)$. Indeed, given $f\in L^{p}(\partial \Omega\times \mathbb R^m\times \mathbb R,\d\sigma_\K)$ there exist, by density of $C_0(\partial \Omega\times \mathbb R^m\times \mathbb R)$ in
$L^{p}(\partial \Omega\times \mathbb R^m\times \mathbb R,\d\sigma_\K)$, a sequence of functions $\{f_j\}$, $f_j \in C_0(\partial \Omega\times \mathbb R^m\times \mathbb R)$, converging to $f$ in $L^{p}(\partial \Omega\times \mathbb R^m\times \mathbb R,\d\sigma_\K)$. In particular, there exists a sequence of functions $\{u_j\}$ where $u_j$ is the unique bounded weak solution to $\L_\K u_j=0$ in $\Omega\times \mathbb R^m\times \mathbb R$ with $u_j=f_j$ on
$\partial\Omega\times \mathbb R^m\times \mathbb R$. By \eqref{impest},
\begin{align}\label{cachy}
\|N(u_k-u_l)\|_{L^{p}(\partial \Omega\times \mathbb R^m\times \mathbb R,\d\sigma_\K)}\leq c \|f_k-f_l\|_{L^{p}(\partial \Omega\times \mathbb R^m\times \mathbb R,\d\sigma_\K)}\to 0\mbox{ as $k,l\to\infty$}.
\end{align}
Consider $U_X\times U_Y\times J\subset\mathbb R^{N+1}$, where $U_X\subset\mathbb R^{m}$ and $U_Y\subset\mathbb R^{m}$ are bounded domains and $J=(a,b)$ with $-\infty<a<b<\infty$. Assume that $\overline{U_X\times U_Y\times J}$ is compactly contained in  $\Omega\times\mathbb R^m\times\mathbb R$ and that the distance from
$\overline{U_X\times U_Y\times J}$ to $\partial\Omega\times\R^m\times\R$ is $r>0$. By a covering argument with cubes of size say $r/2$,  Lemma \ref{lem1}, and the finiteness of $N(u_j)$ in $L^p(\partial \Omega\times \mathbb R^m\times \mathbb R,\d\sigma_\K)$, it follows that
$\{u_j\}$ is uniformly bounded in $L^2(U_X\times U_Y\times J)$ whenever $\overline{U_X\times U_Y\times J}$ is compactly contained in $\Omega\times\mathbb R^m\times\mathbb R$. Using this, and the energy estimate of Lemma \ref{lem1en}, we can conclude that
\begin{equation}\label{ubdedWuucau}
\mbox{$\|\nabla_X u_j\|_{L^2(U_X\times U_Y\times J)}$ is uniformly bounded}.
\end{equation}
Using \eqref{ubdedWuucau} and the weak formulation of the equation
$\L_\K u_j=0$  it follows that $(X\cdot\nabla_Y-\partial_t)u_j$ is uniformly bounded, with respect to $j$, in  $L_{Y,t}^2(U_Y\times J,H_X^{-1}(U_X))$.
Let $ W(U_X\times U_Y\times J)$  be defined as in \eqref{weak1-+}. By the above argument we can conclude, whenever  $\overline{U_X\times U_Y\times J}$ is compactly contained in $\Omega\times\mathbb R^m\times\mathbb R$, that
\begin{equation}\label{ubdedWcau}
    \mbox{$\|u_j\|_{W(U_X\times U_Y\times J)}$ is uniformly bounded}.
\end{equation}
Using \eqref{cachy}, and arguing as in the deductions in \eqref{ubdedWuucau} and \eqref{ubdedWcau}, we can also conclude that
\begin{equation}\label{ubdedWcau+}
    \mbox{$\|u_k-u_l\|_{W(U_X\times U_Y\times J)}\to 0$ as $k,l\to\infty$}.
\end{equation}
Using \eqref{ubdedWcau+} it follows that a subsequence $\{u_{j_k}\}$ of $\{u_{j}\}$ converges to a weak solution $u$ to
\begin{align*}
	\L_{\K} u = 0  &\text{ in} \ \Omega\times \mathbb R^m\times \mathbb R,
\end{align*}
and that
\[
\|N(u)\|_{L^p(\partial \Omega\times \mathbb R^m\times \mathbb R,\d\sigma_\K)}
	 \le c \|f\|_{L^p(\partial \Omega\times \mathbb R^m\times \mathbb R,\d\sigma_\K)}.
\]
Note also, using the notation introduced above, that
\begin{align}\label{cachytrunk}
\|N(u-u_j)\|_{L^{p}(\partial \Omega\times \mathbb R^m\times \mathbb R,\d\sigma_\K)}\leq c \|f-f_j\|_{L^{p}(\partial \Omega\times \mathbb R^m\times \mathbb R,\d\sigma_\K)}\to 0\mbox{ as $j\to\infty$}.
\end{align}
To complete the proof that  $(i)$ implies $(ii)$  we have to prove that $u = f$ n.t. on $\partial \Omega\times \mathbb R^m\times \mathbb R$. Consider
$f\in L^{p}(\partial \Omega\times \mathbb R^m\times \mathbb R,\d\sigma_\K)$ and let $\{f_j\}$, $f_j \in C_0(\partial \Omega\times \mathbb R^m\times \mathbb R)$, be a sequence of functions converging to $f$ in $L^{p}(\partial \Omega\times \mathbb R^m\times \mathbb R,\d\sigma_\K)$. Let $(X_0,Y_0,t_0)\in\partial\Omega\times \mathbb R^m\times \mathbb R$ be a Lebesgue point
of $f$. Given $\delta>0$ we have
\begin{align}\label{atrunk}
N_\delta(u-f)(X_0,Y_0,t_0)&\leq N_\delta(u-u_j)(X_0,Y_0,t_0)+N_\delta(u_j-f_j)(X_0,Y_0,t_0)\notag\\
&+M(f-f_j)(X_0,Y_0,t_0),
\end{align}
where $N_\delta$ was introduced in \eqref{eq:N*trunc} and $N_\delta(u-f)(X_0,Y_0,t_0)$ should be interpreted as $$\sup_{(X,Y,t) \in \Gamma_\delta^\eta(X_0,Y_0,t_0)} |u(X,Y,t)-f(X_0,Y_0,t_0)|.$$
In the following we assume, as we may without loss of generality, that $(0,0,0)\in
\partial \Omega\times \mathbb R^m\times \mathbb R$. Given $\epsilon>0$ small and $R\gg 1$, let $$S_\epsilon(R,\delta):=\{(X,Y,t)\in\Delta_R(0,0,0):\ N_\delta(u-f)(X,Y,t)>\epsilon\}.$$ Using \eqref{atrunk}, weak estimates and \eqref{cachytrunk} we deduce
\begin{align}\label{atrunkmeas}
\sigma_\K(S_\epsilon(R,\delta))\leq c\epsilon^{-p}\bigl ( \|f-f_j\|^p_{L^{p}(\partial \Omega\times \mathbb R^m\times \mathbb R,\d\sigma_\K)}+
\|N_\delta(u_j-f_j)\|^p_{L^{p}(\Delta_R(0,0,0),\d\sigma_\K)}\bigr ).
\end{align}
Now letting $\delta\to 0$, $j\to\infty$, $R\to\infty$, in that order, we deduce that the set of points $(X_0,Y_0,t_0)\in \partial \Omega\times \mathbb R^m\times \mathbb R$ at which
\[
\lim_{\substack{(X,Y,t)\in \Gamma^\eta(X_0,Y_0,t_0)\\ (X,Y,t)\to (X_0,Y_0,t_0)}}|u(X,Y,t)-f(X_0,Y_0,t_0)|>\epsilon,
\]
has $\sigma_\K$ measure zero. As $\epsilon$ is arbitrary we can conclude that $u = f$ n.t. on $\partial \Omega\times \mathbb R^m\times \mathbb R$.

Next we prove that $(ii)$ implies $(i)$ and
hence we assume  that $D_{\K}^p(\partial \Omega\times \mathbb R^m\times \mathbb R,\d\sigma_\K)$ is solvable. Let $(X_0,Y_0,t_0)\in\partial\Omega$,
$\Delta:=\Delta_r(X_0,Y_0,t_0)\subset \partial\Omega\times \mathbb R^m\times \mathbb R$, $f \in C_0(\Delta)$, $f\geq 0$. Let $u$ be the unique bounded solution to the Dirichlet problem with boundary data $f$. Then
$$u(A_{c\Delta,\Lambda}^+)=\iiint_{\Delta}K_\K(A_{c\Delta,\Lambda}^+, X, Y, t)f( X, Y, t)\, \d\sigma_\K( X, Y, t).$$
Using the estimate in Lemma \ref{lem1}, and $(ii)$,
\begin{align*}
u(A_{c\Delta,\Lambda}^+)&\lesssim \biggl (\bariiint_{Q_{r/c}(A_{c\Delta,\Lambda}^+)} |u( X, Y, t)|^p\, \d  X\d  Y\d  t\biggr )^{1/p}\\
&\lesssim \biggl (\frac 1{\sigma_\K(\Delta)}\iiint_{4\Delta} |N(u)( X, Y, t)|^p\, \d\sigma_\K( X, Y, t)\biggr )^{1/p}\\
&\lesssim \biggl (\frac 1{\sigma_\K(\Delta)}\iiint_{\Delta} |f( X, Y, t)|^p\, \d\sigma_\K( X, Y, t)\biggr )^{1/p}.
\end{align*}
In particular, for all $f \in C_0(\Delta)$  with $\| f \|_{L^{p}(\partial \Omega\times \mathbb R^m\times \mathbb R,\d\sigma_\K)}=1$, we have
\begin{equation*}
\biggl |\iiint_{\Delta}K_\K(A_{c\Delta,\Lambda}^+, X, Y, t)f( X, Y, t)\, \d\sigma_\K( X, Y, t)\biggr |\leq \biggl (\frac 1{\sigma_\K(\Delta)}\biggr )^{1/p}.
\end{equation*}
Hence, {since $(\Delta, \sigma_\K)$ is a finite measure space,}
\begin{equation*}
\biggl (\iiint_{\Delta}|K_\K(A_{c\Delta,\Lambda}^+, X, Y, t)|^{q}\d\sigma_\K( X, Y, t)\biggr )^{1/q}\leq \biggl (\frac 1{\sigma_\K(\Delta)}\biggr )^{1/p}.
\end{equation*}
Furthermore,  Lemma \ref{lem4.5-Kyoto1} and Lemma \ref{lem4.7} imply
\begin{equation*}
\iiint_{\Delta}K_\K(A_{c\Delta,\Lambda}^+, X, Y, t)\d\sigma_\K( X, Y, t)=\omega_\K(A_{c\Delta,\Lambda}^+,\Delta)\gtrsim 1.
\end{equation*}
Combining the estimates,
\begin{equation*}\label{K1a}
\biggl (\bariiint_{\Delta}|K_\K(A_{c\Delta,\Lambda}^+, X, Y, t)|^{q}\d\sigma_\K( X, Y, t)\biggr )^{1/q}\lesssim \bariiint_{\Delta}K_\K(A_{c\Delta,\Lambda}^+, X, Y, t)\d\sigma_\K( X, Y, t).
\end{equation*}
Hence $K_\K (A_{c\Delta,\Lambda}^+,\cdot,\cdot,\cdot)\in B_q(\Delta,\d\sigma_\K)$ and  the proof that $(ii)$ implies $(i)$ is complete. Put together we have proved that the statements in Theorem \ref{DPequiv} $(i)$ and $(ii)$ are equivalent.

\subsection{Proof of the uniqueness statement in Theorem \ref{DPequiv}}
Having proved that Theorem \ref{DPequiv} $(i)$ and $(ii)$ are equivalent it remains to prove that if   $D_{\K}^p(\partial \Omega\times \mathbb R^m\times \mathbb R,\d\sigma_\K)$ is solvable, then $D_{\K}^p(\partial \Omega\times \mathbb R^m\times \mathbb R,\d\sigma_\K)$ is uniquely solvable. That is, we have to prove that if
$N(u)\in {L^p(\partial \Omega\times \mathbb R^m\times \mathbb R,\d\sigma_\K)}$, and  if {$u$ is a weak solution} to the Dirichlet problem
\begin{equation*}
\begin{cases}
	\L_{\K} u = 0  &\text{in} \ \Omega\times \mathbb R^m\times \mathbb R, \\
      u = 0  & \text{n.t. on} \ \partial \Omega\times \mathbb R^m\times \mathbb R,
\end{cases}
\end{equation*}
then $u\equiv 0$ in $\Omega\times \mathbb R^m\times \mathbb R$. Note that the proof of this is considerably more involved compared to the corresponding arguments in the elliptic setting, \cite{Ke} and \cite{KS}. One reason is, again, the (time)-lag in the Harnack inequality for parabolic equations.

To start the proof we fix $(\hat X,\hat Y,\hat t)\in\Omega\times \mathbb R^m\times \mathbb R$ and we intend to prove that $u(\hat X,\hat Y,\hat t)=0$. Let $\theta\in C_0^\infty(\Omega\times \mathbb R^m\times \mathbb R)$ with $\theta=1$ in a neighborhood of $(\hat X,\hat Y,\hat t)$. Then, using \eqref{ghh10-},
    \begin{equation}\label{eq9+moa}
    \begin{split}
    u(\hat X,\hat Y,\hat t)&=(u\theta)(\hat X,\hat Y,\hat t)\\
    &=-\iiint{A( X)\nabla_{ X}G_\K (\hat X,\hat Y,\hat t, X, Y, t)}\cdot \nabla_{ X} (u\theta)( X, Y, t)\,
    \d  X \d Y\d t\\
    &\quad+\iiint{G_\K (\hat X,\hat Y,\hat t, X, Y, t)}( X\cdot\nabla_{ Y}-\partial_{ t})(u\theta)( X, Y, t)\, \d  X \d Y\d t.
    \end{split}
    \end{equation}
    By the results in \cite{Ietal}, see Lemma \ref{lem1+}, we know that any weak solution to $\L_\K u=0$ is H{\"o}lder continuous. As $A$ is independent of $(Y,t)$,
    it follows that partial derivatives of $u$ with respect to $Y$ and $t$ are also weak solutions. As a consequence, as $A$ is independent of $(Y,t)$, any weak solution to $\L_\K u=0$ is $C^\infty$-smooth as a function of $(Y,t)$.  Hence the term
    $( X\cdot\nabla_{ Y}-\partial_{ t})(u\theta)$ appearing in the last display is well-defined. Using \eqref{eq9+moa}, and that $\L_\K u=0$,
       \begin{eqnarray}\label{eq9+moab}
    |u(\hat X,\hat Y,\hat t))|\lesssim (I+II+III),
    \end{eqnarray}
    where
        \begin{align}\label{eq9+moabc}
            I&:=\iiint|G_\K (\hat X,\hat Y,\hat t, X, Y, t))|\nabla_{ X} u( X, Y, t)||\nabla_{ X}\theta( X, Y, t)|\, \d  X \d Y\d t,\notag\\
    II&:=\iiint|\nabla_{ X}G_\K (\hat X,\hat Y,\hat t, X, Y, t)|u( X, Y, t)||\nabla_{ X}\theta( X, Y, t)|\, \d  X \d Y\d t,\\
    III&:=\iiint|G_\K (\hat X,\hat Y,\hat t, X, Y, t)||u( X, Y, t)||(\partial_{ t}- X\cdot\nabla_{ Y})\theta( X, Y, t)|\, \d  X \d Y\d t.\notag
    \end{align}

     Recall the notation  $Q:=(-1,1)^m\times (-1,1)^m\times (-1,1)$. Given $(\hat X,\hat Y,\hat t)=(\hat x,\hat x_m,\hat Y,\hat t)\in \Omega\times \mathbb R^m\times \mathbb R$ fixed, we have $$((\hat x,\psi(\hat x)),\hat Y,\hat t)\in \partial\Omega\times \mathbb R^m\times \mathbb R$$ fixed. We consider $ Q_R((\hat x,\psi(\hat x)),\hat Y,\hat t)=((\hat x,\psi(\hat x)),\hat Y,\hat t)\circ  Q_R$ and we let $\epsilon$ and $R$ satisfy
    $$\mbox{$\epsilon<\lambda/8$, $R>8\lambda$ where $\lambda:=\hat x_{m}-\psi(\hat x)$.}$$
    When taking limits, we will always first let $\epsilon\to 0$ before letting $R\to \infty$.

     Let $\varphi_1=\varphi_1(X,Y,t)\in C_0^\infty( Q_{2R}((\hat x,\psi(\hat x)),\hat Y,\hat t))$, $0\leq\varphi_1\leq 1$, be such that $\varphi_1\equiv 1$ on
     $Q_{R}((\hat x,\psi(\hat x)),\hat Y,\hat t)$. Let $\varphi_2=\varphi_2(X)=\varphi_2(x,x_m)$ be a smooth function with range $[0,1]$ such that $\varphi_2(x,x_m)\equiv 1$
     on $\{(x,x_m):\  x_m\geq \psi( x)+2\epsilon\}$, $\varphi_2(x,x_m)\equiv 0$
     on $\{(x,x_m):\  x_m\leq \psi( x)+\epsilon\}$. Note that $\varphi_1$ can be constructed so that
     $||R\nabla_X\varphi_1||_{L^\infty}+||R^2(X\cdot\nabla_{Y}-\partial_{t})\varphi_1||_{L^\infty}\lesssim 1$. Similarly, $\varphi_2$ can be constructed so that
     $||\epsilon\nabla_X\varphi_2||_{L^\infty}\leq c$ where $c$ is independent of $\epsilon$. We let $$\theta=\theta(X,Y,t)=\theta(x,x_m,Y,t):=\varphi_1(X,Y,t)\varphi_2(x,x_m).$$
     Then $\theta\in C_0^\infty( Q_{2R}((\hat x,\psi(\hat x)),\hat Y,\hat t)$, $0\leq\theta\leq 1$, $\theta\equiv 1$ on
    the set of points $ ( X, Y, t)=( x, x_m, Y, t)\in Q_{R}((\hat x,\psi(\hat x)),\hat Y,\hat t)$ which satisfies $ x_m\geq \psi( x)+2\epsilon$ and  $\theta\equiv 0$ on
    the set of points in $ ( X, Y, t)=( x, x_m, Y, t)\in Q_{R}((\hat x,\psi(\hat x)),\hat Y,\hat t)$ which satisfies $ x_m\leq \psi( x)+\epsilon$.  Let
    \begin{eqnarray*}
(i)&&D_1:= Q_{2R}((\hat x,\psi(\hat x)),\hat Y,\hat t)\cap  \{( X, Y, t)\mid  \psi( x)+\epsilon< x_m<\psi( x)+2\epsilon\},\notag\\
(ii)&&D_2:= Q_{2R}((\hat x,\psi(\hat x)),\hat Y,\hat t)\cap  \{( X, Y, t) \mid \psi( x)+R< x_m<\psi( x)+2R\},\notag\\
(iii)&&D_3:= D_4\cap  \{( X, Y, t) \mid \psi( x)+2\epsilon\leq x_m\leq \psi( x)+R\},
    \end{eqnarray*}
where
    \begin{eqnarray*}\label{eq9+moabcdllla}
D_4:= Q_{2R}((\hat x,\psi(\hat x)),\hat Y,\hat t)\setminus  Q_{R}((\hat x,\psi(\hat x)),\hat Y,\hat t).
    \end{eqnarray*}
Using this notation, the domains where the integrands in $I,II,$ and $III$ are non-zero are contained in
    the union $D_1\cup D_2\cup D_3$. By the construction of $\theta$,
                \begin{eqnarray*}
(i')&& ||\epsilon\nabla_X\theta||_{L^\infty(D_1)}+||R^2(X\cdot\nabla_{Y}-\partial_{t})\theta||_{L^\infty(D_1)}\leq c,\notag\\
(ii')&& ||R\nabla_X\theta||_{L^\infty(D_2)}+||R^2(X\cdot\nabla_{Y}-\partial_{t})\theta||_{L^\infty(D_2)}\leq c,\notag\\
(iii')&& ||R\nabla_X\theta||_{L^\infty(D_3)}+||R^2(X\cdot\nabla_{Y}-\partial_{t})\theta||_{L^\infty(D_3)}\leq c,
    \end{eqnarray*}
    where $c$ is a constant which is independent of $\epsilon$ and $R$. Note that if $( X, Y, t)\in D_3$, then $\theta( X, Y, t)=\varphi_1(X,Y,t)$ and this explains $(iii')$.

    Using the sets $D_1,D_2,$ and $D_3$, and letting $$G_\K (\cdot,\cdot,\cdot):=G_\K (\hat X,\hat Y,\hat t,\cdot,\cdot,\cdot),$$ we see that
              \begin{align}\label{eq9+moabcg}
            I+II+III&\lesssim T_1+T_2+T_3,
    \end{align}
    where
        \begin{align*}
          T_1&:= \frac 1{\epsilon^2}\iiint_{D_1}(\epsilon|G_\K ||\nabla_{ X} u|+\epsilon|\nabla_{ X} G_\K || u|+\epsilon^2R^{-2}| G_\K || u|)\, \d X \d Y\d t,\notag\\
           T_2&:= \frac 1{R^2}\iiint_{D_2}(R|G_\K ||\nabla_{ X} u|+R|\nabla_{ X} G_\K || u|+| G_\K || u|)\, \d X \d Y\d t,\\
            T_3&:= \frac 1{R^2}\iiint_{D_3}(R|G_\K ||\nabla_{ X} u|+R|\nabla_{ X} G_\K || u|+| G_\K || u|)\, \d X \d Y\d t.
    \end{align*}
    We need to estimate $T_1,T_2,$ and $T_3$. To improve readability we will in the following use the notation
\[
\Delta_{\rho} := (\partial\Omega\times\R^m\times\R)\cap Q_{\rho}((\hat x,\psi(\hat x)),\hat Y,\hat t)\mbox{ for $\rho>0$}.
\]

We first consider $T_1$. We start by estimating the contribution from the term $| G_\K || u|$ and in this case we prove a harder estimate than we need. The argument will be used for further reference. Note that
\begin{equation*}
\begin{split}
    &\frac{1}{\epsilon^2} \iiint_{D_1}{ |G_\K | |u| \d X \d Y \d t}\notag\\
    &\lesssim
    \iiint_{\Delta_{2R}}{\tilde N_{\epsilon}(u)\left(
        \frac{1}{\epsilon} \int_{\psi( x)+\epsilon}^{\psi( x)+2\epsilon}{ \frac{G_\K ((x,x_m), Y, t)}{\epsilon} \d{x}_m}\right)\d\sigma_\mathcal{K}}\\
    &\lesssim \|\tilde N_\epsilon(u)\|_{L^p(\Delta_{2R},\d\sigma_\mathcal{K})} \left( \iiint_{\Delta_{2R}}{\left(
      \frac{1}{\epsilon} \int_{\psi( x)+\epsilon}^{\psi( x)+2\epsilon}{ \frac{G_\K ((x,x_m), Y, t)}{\epsilon} \d{x}_m}\right)^{q} \d\sigma_\mathcal{K}} \right)^{1/q},
\end{split}
\end{equation*}
where $\tilde N_\epsilon$ is a truncated maximal operator defined as
$$\tilde N_\epsilon(u)( X, Y, t):=\sup_{\psi( x)< x_m<\psi( x)+2\epsilon}|u((x,x_m), Y, t)|.$$
Using  Lemma \ref{greenmeasurerelation} and the definition of $K_\mathcal{K}$, see \eqref{kerdef}, we have, for every $( X, Y, t)\in\Delta_{2R}$, $1\leq\sigma\leq 2$, and denoting by $e_m$ the unit vector in $\mathbb R^m$ pointing into $\Omega$ in the $x_m$ direction,
\begin{align*}
\lim_{\epsilon\rightarrow 0}\frac{G_\K (\hat X,\hat Y,\hat t, X+\sigma\epsilon e_m, Y, t)}{\epsilon} &\lesssim \lim_{\epsilon\rightarrow 0} \frac{\omega_{\mathcal{K}}(\hat X,\hat Y,\hat t,\Delta_{c\sigma\epsilon}( X, Y, t))}{\epsilon^{{\bf q}-1}}\notag\\
& \lesssim K_\mathcal{K}(\hat X,\hat Y,\hat t, X, Y, t).
\end{align*}
Note that if $\hat t\leq t$, then  this is trivial as the left hand side is identically zero. If $\hat t> t$, then we may apply Lemma \ref{greenmeasurerelation} in the deduction as we are considering the limiting situation $\epsilon\rightarrow 0$.
Using these estimates, and Lebesgue's theorem  on dominated convergence, we obtain
\begin{equation}\label{limit}
\begin{split}
&\limsup_{\epsilon\to 0}\frac{1}{\epsilon^2} \iiint_{D_1}{ |G_\K | |u| \d X \d Y \d t}\\
& \lesssim \biggl (\limsup_{\epsilon\to 0}\|\tilde N_\epsilon(u)\|_{L^p(\Delta_{2R},\d\sigma_\mathcal{K})} \biggr )\|K_\mathcal{K}(\hat X,\hat Y,\hat t,\cdot,\cdot,\cdot)\|_{L^q(\Delta_{2R},\d\sigma_\mathcal{K})}=0,
\end{split}
\end{equation}
as $u$ vanishes at the boundary in the non-tangential sense.  We next consider the term
\[
\frac{1}{\epsilon} \iiint_{D_1}{ |G_\K | |\nabla_Xu| \d X \d Y \d t}.
\]
In this case, we first note, using Lemma \ref{lem4.7} and the construction of $D_1$, that if $\epsilon$ is small enough, then
\begin{align}\label{upper}
G_\K (X, Y, t)=G_\K (\hat X,\hat Y,\hat t, X, Y, t)\lesssim (R/\lambda)^\gamma G_\K (A^+_{c\Delta_R,\Lambda}, X, Y, t)
\end{align}
whenever $(X,Y,t)\in D_1$. Let $\{Q_j\}$ be all Whitney cubes in a Whitney decomposition of $\Omega\times\mathbb R^m\times\mathbb R$ which intersects $D_1$. Then $|Q_j|\approx \epsilon^{\bf q}$. Using \eqref{upper} and  H{\"o}lder's inequality
\begin{equation}\label{esto-}
    \begin{split}
       & \frac{1}{\epsilon} \iiint_{D_1}{ |G_\K | |\nabla_Xu| \d X \d Y \d t}\notag\\
        &\lesssim (R/\lambda)^\gamma\frac{1}{\epsilon}\sum_j \iiint_{Q_j}{ G_\K (A^+_{c\Delta_R,\Lambda}, X, Y, t) |\nabla_Xu| \d X \d Y \d t}\notag\\
        &\lesssim   (R/\lambda)^\gamma\frac{1}{\epsilon}\sum_j  \left(\iiint_{Q_j}{ |G_\K (A^+_{c\Delta_R,\Lambda}, X, Y, t)|^2 \d X \d Y \d t}\right)^{1/2} \left(\iiint_{Q_j}{ |\nabla_Xu|^2 \d X \d Y \d t}\right)^{1/2}.
    \end{split}
\end{equation}
Using the adjoint version of Lemma \ref{lem4.7}, and Lemma \ref{greenestimate}, we see that
\begin{equation}\label{esto}
    \begin{split}
  \sup_{4Q_j}G_\K (A^+_{c\Delta_R,\Lambda}, X, Y, t)\lesssim \inf_{4Q_j}G_\K (A^+_{c\Delta_R,\Lambda}, X, Y, t).
    \end{split}
\end{equation}
Furthermore, using the energy estimate of Lemma \ref{lem1en}, assuming that the Whitney decomposition is such that $8Q_j\subset \Omega\times\mathbb R^m\times\mathbb R$,
\begin{equation}\label{esto+}
    \begin{split}
        \iiint_{Q_j}{|\nabla_Xu|^2\d X \d Y \d t} &\lesssim \epsilon^{-2}\iiint_{2Q_j}{|u|^2\d X \d Y \d t}\lesssim\epsilon^{-2} |Q_j| \big(\sup_{2Q_j}|u|\big)^2.
    \end{split}
\end{equation}
Using \eqref{esto-}-\eqref{esto+} we deduce
\begin{equation}\label{esto-+}
    \begin{split}
        &\frac{1}{\epsilon} \iiint_{D_1}{ |G_\K | |\nabla_Xu| \d X \d Y \d t}\\
         &\lesssim (R/\lambda)^\gamma\frac{1}{\epsilon^2}
        \sum_j |Q_j|\big(\inf_{4Q_j} G_\K (A^+_{c\Delta_R,\Lambda}, X, Y, t)\big)\big(\sup_{2Q_j} |u( X, Y, t)|\big).
    \end{split}
\end{equation}
Using Lemma \ref{lem1}
\begin{eqnarray}\label{usesubsolest}
\sup_{2Q_j}\ |u|\lesssim\biggl (\bariiint_{4Q_j}\ |u|\, \d X\d Y \d t\biggr ).
\end{eqnarray}
This inequality in combination with \eqref{esto-+} imply that
\begin{equation}\label{esto-+re}
    \begin{split}
       & \frac{1}{\epsilon} \iiint_{D_1}{ |G_\K | |\nabla_Xu| \d X \d Y \d t}\\
        &\lesssim (R/\lambda)^\gamma\frac{1}{\epsilon^2}\iiint_{\tilde D_1}G_\K (A^+_{c\Delta_R,\Lambda}, X, Y, t))|u( X, Y, t)|\d X \d Y \d t,
    \end{split}
\end{equation}
where $\tilde D_1$ is the enlargement of $D_1$ defined as the union of the cubes $\{4Q_j\}$. We can now repeat the argument leading up to \eqref{limit}, with $G_\K $ replaced by $G_\K (A^+_{c\Delta_R,\Lambda}, \cdot,\cdot,\cdot)$ and with $D_1$ replaced by $\tilde D_1$, to conclude that
\begin{equation}\label{esto-+a}
    \begin{split}
       \limsup_{\epsilon\to 0} \frac{1}{\epsilon} \iiint_{D_1}{ |G_\K | |\nabla_Xu| \d X \d Y \d t}=0.
    \end{split}
\end{equation}
The remaining term in $T_1$ can be handled analogously and hence we can conclude that
\begin{equation}\label{1limit}
T_1 \rightarrow 0,\:\text{as }\epsilon\rightarrow 0.
\end{equation}

Next we consider $T_2$ and we first consider the contribution from the term
\begin{equation}\label{1sttermT2}
    \frac 1{R^2}\iiint_{D_2}| G_\K || u|\, \d X \d Y\d t.
\end{equation}
In this case we first note, using Lemma \ref{lem_fsolbounds}, that
 $$G_\K (X,Y,t)=G_\K (\hat X,\hat Y,\hat t,X,Y,t)\leq \Gamma_\K (\hat X,\hat Y,\hat t,X,Y,t)\lesssim R^{2-{\bf q}}$$
 whenever $(X,Y,t)\in D_2$. Hence,
\begin{equation*}
    \begin{split}
        \frac 1{R^2}\iiint_{D_2}| G_\K || u|\, \d X \d Y\d t &\lesssim R^{1-{\bf q}}\iiint_{\Delta_{2R}}{N(u) \d\sigma_\mathcal{K}}\\
        &\lesssim R^{1-{\bf q}} R^{({\bf q}-1)(1-1/p)} \|N(u)\|_{L^p(\Delta_{2R},\d\sigma_\mathcal{K})}\\
        &= R^{\frac{1-\bf q}{p}} \|N(u)\|_{L^p(\Delta_{2R},\d\sigma_\mathcal{K})} \rightarrow 0,\text{as }R\rightarrow \infty.
    \end{split}
\end{equation*}
We next consider the contribution from the term
\[
    \frac 1{R}\iiint_{D_2}| G_\K ||\nabla_{ X} u|\, \d X \d Y\d t.
\]
Using the energy estimate of Lemma \ref{lem1en}, as well as Lemma \ref{lem1},
\begin{equation*}
    \begin{split}
     \biggl (\iiint_{D_2}|\nabla_{ X} u|^2\, \d X \d Y\d t\biggr )^{1/2}&\lesssim R^{-1-{\bf q}/2}\iiint_{\tilde D_2}|u|\, \d X \d Y\d t,
    \end{split}
\end{equation*}
where $\tilde D_2$ is an enlargement of $D_2$. Using this, and also again using the bound on $G_\K $ stated above, we see that
\begin{equation*}
    \begin{split}
        \frac 1{R}\iiint_{D_2}| G_\K ||\nabla_{ X} u|\, \d X \d Y\d t &\lesssim R^{1-{\bf q}/2}R^{-1-{\bf q}/2}\iiint_{\tilde D_2}|u|\, \d X \d Y\d t\notag\\
        &\lesssim R^{1-{\bf q}}\iiint_{\Delta_{4R}}|N(u)|\d\sigma_K\notag\\
        &\lesssim R^{1-{\bf q}} R^{({\bf q}-1)/q} \|N(u)\|_{L^p(\Delta_{4R},\d\sigma_\mathcal{K})}\\
        &\lesssim R^{\frac{1-{\bf q}}{p}} \|N(u)\|_{L^p(\Delta_{4R},\d\sigma_\mathcal{K})} \rightarrow 0,\:\text{as }R\rightarrow \infty.
    \end{split}
\end{equation*}
The remaining term in $T_2$ can be handled analogously and hence we can conclude that
\begin{equation}\label{2limit}
T_2 \rightarrow 0,\: \text{as } R\rightarrow \infty.
\end{equation}

Finally we consider $T_3$. The term in $T_3$ containing the integrand $|G_\K ||u|$ can be handled as we handled the term in \eqref{1sttermT2}. To handle the other terms  we first recall that by construction $G_\K (\hat X,\hat Y,\hat t, X, Y, t)\neq 0$ if and only if $t<\hat t$. Furthermore,
for $(X, Y, t)\in D_3$ fixed  $G_\K (\cdot,\cdot,\cdot, X, Y, t)$ is a non negative solution to $\L_\K u=0$ in $(\Omega\times\mathbb R^m\times\mathbb R)\cap Q_{R}((\hat x,\psi(\hat x)),\hat Y,\hat t)$. In particular, if $R$ is large enough, then by Theorem \ref{thm:carleson} we have that
\begin{align}\label{upperre}
G_\K (\hat X,\hat Y,\hat t, X, Y, t)\lesssim  G_\K (A^+_{c^{-1}\Delta_R,\Lambda}, X, Y, t),
\end{align}
whenever $(X,Y,t)\in D_3$ and we can ensure that $A^+_{c^{-1}\Delta_R,\Lambda}\subset Q_{R/2}((\hat x,\psi(\hat x)),\hat Y,\hat t)$. To proceed we let
$C=C(m)\gg 1$ be  a large but fixed constant, and we introduce
\begin{eqnarray*}
D_3^\ast:=D_3\cap  \{( X, Y, t):\  \psi(x)+2\epsilon\leq x_m\leq \psi( x)+R/C\}.
    \end{eqnarray*}
Then the domain of integration  in the terms defining $T_3$ is partitioned into integration over $D_3^\ast$ and $D_3\setminus D_3^\ast$. Integration over the latter set can be handled as we handled $T_2$. Therefore we here only consider the remaining terms in $T_3$ but with domain of integration defined by $D_3^\ast$.   We now let
 $\{Q_j\}$ be all Whitney cubes in a  Whitney decomposition of $\Omega\times\mathbb R^m\times\mathbb R$ which intersects $D_3^\ast$. Focusing on the term in $T_3$  containing the integrand  $|G_\K | |\nabla_Xu|$ we see that
\begin{align}\label{esto-re}
       & \frac{1}{R} \iiint_{D_3^\ast}{ |G_\K | |\nabla_Xu| \d X \d Y \d t}\notag\\
       &\leq \frac{1}{R}\sum_j \iiint_{Q_j\cap D_3^\ast}{ |G_\K | |\nabla_Xu| \d X \d Y \d t}\notag\\
       &\lesssim \frac{1}{R} \sum_j |Q_j|^{1/2} l(Q_j)^{-1}\left(\iiint_{Q_j\cap D_3^\ast}{ |G_\K |^2 \d X \d Y \d t}\right)^{1/2} \left(\bariiint_{4Q_j}{ |u| \d X \d Y \d t}\right)\notag\\
    &\lesssim \frac{1}{R} \sum_j  |Q_j|l(Q_j)^{-1}\bigl(\sup_{Q_j} G_\K (A^+_{c^{-1}\Delta_R,\Lambda}, X, Y, t)\bigr) \left(\bariiint_{4Q_j}{ |u| \d X \d Y \d t}\right),
\end{align}
where we have used  Lemma \ref{lem1en},  Lemma \ref{lem1} and \eqref{upperre}. Furthermore, \eqref{esto} remains valid in this context and hence
\begin{equation}\label{estore}
\begin{split}
&\bigl(\sup_{Q_j} G_\K (A^+_{c^{-1}\Delta_R,\Lambda}, X, Y, t)\bigr) \left(\bariiint_{4Q_j}{ |u| \d X \d Y \d t}\right)\\
&\lesssim \left(\bariiint_{4Q_j}{ G_\K (A^+_{c^{-1}\Delta_R,\Lambda}, X, Y, t)|u| \d X \d Y \d t}\right).
\end{split}
\end{equation}
Combining these insights we see, using the notation $\delta(X):=(x_m-\psi(x))$, that
\begin{align}\label{esto-rere}
        &\frac{1}{R} \iiint_{D_3^\ast}{ |G_\K | |\nabla_Xu| \d X \d Y \d t}\notag\\
        &\lesssim \frac{1}{R} \sum_j l(Q_j)^{-1}\left(\iiint_{4Q_j}{ G_\K (A^+_{c^{-1}\Delta_R,\Lambda}, X, Y, t)|u| \d X \d Y \d t}\right)\\
       &\lesssim \frac{1}{R} \left(\iiint_{\tilde D_3^\ast}{ G_\K (A^+_{c^{-1}\Delta_R,\Lambda}, X, Y, t)|u| \delta(X)^{-1}\d X \d Y \d t}\right),\notag
\end{align}
where $\tilde D_3^\ast$ is a slight enlargement of $D_3^\ast$ due to the enlargement from $Q_j$ to $4Q_j$. In particular,
\begin{equation}\label{esto-rerere}
\begin{split}
        &\frac{1}{R} \iiint_{D_3^\ast}{ |G_\K | |\nabla_Xu| \d X \d Y \d t}\\
        &\lesssim \frac{1}{R} \left(\iiint_{D_5}{ G_\K (A^+_{c^{-1}\Delta_R,\Lambda}, X, Y, t)|u| \delta(X)^{-1}\d X \d Y \d t}\right),
\end{split}
\end{equation}
where {$D_5$ is defined as the set
\begin{eqnarray*}
 (\Omega\times\mathbb R^m\times\mathbb R)\cap \big(Q_{cR}(\hat X,\hat Y,\hat t)\setminus  \lbrace (X,Y,t)\mid (x,\psi(x),Y,t)\in \Delta_{R/c},\ \psi(x)\leq x_m < \psi(x) + 2cR \rbrace\big), 
\end{eqnarray*}
for some $c=c(m)\gg 1$.} Note that points in $D_5$ can be represented as $$(X,Y,t)=((x,\psi(x)),Y,t)+(0,\delta(X),0,0),$$ where
    $((x,\psi(x)),Y,t)\in \Delta_{cR}\setminus  \Delta_{R/c}.$
    Consider one such point $(X,Y,t)$. We claim that
\begin{align}\label{esto-rererere}
    G_\K (A^+_{c^{-1}\Delta_R,\Lambda}, X, Y, t)\delta(X)^{-1}\lesssim {M}\bigl(K_\K(A^+_{c^{-1}\Delta_R,\Lambda},\cdot)\chi_{\Delta_{2cR}\setminus  \Delta_{R/(2c)}}(\cdot)\bigr )((x,\psi(x)),Y,t),
\end{align}
where again ${M}$ denotes the Hardy-Littlewood maximal function on $\partial\Omega\times\mathbb R^m\times \mathbb R$ w.r.t $\sigma_\K$, and
$\chi_{\Delta_{cR}\setminus  \Delta_{R/c}}(\cdot)$ is the indicator function for the set $\Delta_{cR}\setminus  \Delta_{R/c}$. To prove \eqref{esto-rererere} we simply note, using Lemma \ref{greenmeasurerelation}, that
\begin{align*}\label{esto-rererere}
   G_\K (A^+_{c^{-1}\Delta_R,\Lambda}, X, Y, t)\delta(X)^{-1}\lesssim \frac {\omega_\K(A^+_{c^{-1}\Delta_R,\Lambda},\Delta_{cr}((x,\psi(x)),Y,t))}{\sigma_\K(\Delta_{cr}((x,\psi(x)),Y,t))},
\end{align*}
where $ r:=\delta(X)$, and that $\omega_\K(A^+_{c^{-1}\Delta_R,\Lambda},\Delta_{cr}((x,\psi(x)),Y,t))$ can be expressed as
    \begin{align*}
    &\iiint_{\Delta_{cr}((x,\psi(x)),Y,t)}K_\K(A^+_{c^{-1}\Delta_R,\Lambda},\tilde X,\tilde Y,\tilde t)\, \d\sigma_\K(\tilde X,\tilde Y,\tilde t)\notag\\
    &= \iiint_{\Delta_{cr}((x,\psi(x)),Y,t)}K_\K(A^+_{c^{-1}\Delta_R,\Lambda},\tilde X,\tilde Y,\tilde t)\chi_{\Delta_{2cR}\setminus  \Delta_{R/(2c)}}(\tilde X,\tilde Y,\tilde t)\, \d\sigma_\K(\tilde X,\tilde Y,\tilde t).
\end{align*}
Using \eqref{esto-rererere} we can continue the estimate in \eqref{esto-rerere} to conclude that
\begin{align*}
        \frac{1}{R} \iiint_{D_3^\ast}{ |G_\K | |\nabla_Xu| \d X \d Y \d t}\lesssim \iiint_{\Delta_{cR}\setminus  \Delta_{R/c}}{M}\bigl(K_\K(A^+_{c^{-1}\Delta_R,\Lambda},\cdot)\chi_{\Delta_{2cR}\setminus  \Delta_{R/(2c)}}(\cdot)\bigr )N(u)\, \d\sigma_\K.
\end{align*}
Hence, the term on the left hand side in the last display can estimated by
\begin{align*}
        & \biggl (\iiint_{\Delta_{cR}\setminus  \Delta_{R/c}}|K_\K(A^+_{c^{-1}\Delta_R,\Lambda},\cdot)|^q\, \d\sigma_\K\biggr)^{1/q}\biggl (\iiint_{\Delta_{cR}\setminus  \Delta_{R/c}}|N(u)|^p\, \d\sigma_\K\biggr)^{1/p}\notag\\
        &\lesssim (\sigma_\K(\Delta_{cR}))^{1/q-1}\biggl (\iiint_{\Sigma\setminus  \Delta_{R/c}}|N(u)|^p\, \d\sigma_\K\biggr)^{1/p}\to 0,
\end{align*}
as $R\to\infty$. This completes the estimate of the term in $T_3$  containing the integrand  $|G_\K | |\nabla_Xu|$. The term containing the integrand
$|\nabla_XG_\K | |u|$ can be estimated in a similar manner. We omit further details and claim that
\begin{equation}\label{2limitre}
T_3 \rightarrow 0\:\text{ as }R\rightarrow \infty.
\end{equation}

To summarize, we have proved that
\begin{align}\label{eq9+moabcgapa}
      |u(\hat X,\hat Y,\hat t)|&\lesssim\lim_{R\to\infty}\limsup_{\epsilon\to 0}\  (I+II+III)\notag\\
      &\lesssim \lim_{R\to\infty}\limsup_{\epsilon\to 0}\ (T_1+T_2+T_3)=0.
    \end{align}
    I.e. $|u(\hat X,\hat Y,\hat t)|=0$, and as $(\hat X,\hat Y,\hat t)$ is an arbitrary but fixed point in the argument, we can conclude that $u\equiv 0$ in $\Omega\times \mathbb R^m\times \mathbb R$. This completes the proof of uniqueness and hence the proof of Theorem \ref{DPequiv}.

\section{An alternative proof of Theorem \ref{per1-} along the lines of \cite{FSa}}\label{fabes}

In this section we give,  as we believe that the argument may be of independent interest in the case of operators of Kolmogorov type, a proof of the key estimate underlying Theorem \ref{per1-} using Rellich type inequalities instead of the structural theorem. Hence, the proof is along the lines of the corresponding proof for the heat equation in \cite{FSa}. To avoid formal calculations and manipulations
we will, for simplicity, throughout the section assume
        \begin{eqnarray}\label{eq2+}
    \mbox{\eqref{keyassump} and that $\partial\Omega$ is $C^\infty$-smooth.}
    \end{eqnarray}
    The assumptions in \eqref{eq2+} will only be used in a qualitative fashion and the constants of our quantitative estimates will only depend on $m$, $\kappa$ and $M$. The general case follows by  approximation arguments that we leave to the interested reader.

      In addition to \eqref{eq2+} we also assume \eqref{eq:Aperiod-}, i.e. that $A$ is independent of $x_m$.  Then the unique bounded solution to the Dirichlet problem $\L_\K u=0$ in $\Omega\times\R^m\times\R$, $u=f\in C_0(\partial\Omega\times\R^m\times\R)$, equals
\begin{eqnarray*}%\label{G2}
u(\hat X,\hat Y,\hat t)=\iiint_{\partial\Omega\times\mathbb R^m\times\mathbb R} K_\K(\hat X,\hat Y,\hat t, X, Y, t)f( X, Y, t)\, \d\sigma_\K( X, Y, t),
\end{eqnarray*}
and due to \eqref{eq2+},
\begin{eqnarray*}%\label{G3}
 K_\K(\hat X,\hat Y,\hat t, X, Y, t):=A( x)\nabla_{ X}G_\K (\hat X,\hat Y,\hat t, X, Y, t)\cdot N(X),
\end{eqnarray*}
for all $(X, Y, t)\in \partial\Omega\times\mathbb R^m\times\mathbb R$ and where $N( X)$ is the outer unit normal to $\partial\Omega$ at $ X\in\partial\Omega$.

We are going to prove that if $\Delta:=\Delta_r(X_0,Y_0,t_0)\subset \partial\Omega\times \mathbb R^m\times \mathbb R$, then
 \begin{equation}\label{Bqrepeat}
 \begin{split}
        &\biggl (\bariiint_{\tilde\Delta}|K_\K (A_{c\Delta,\Lambda}^+, X, Y, t)|^2\, \d\sigma_\K( X, Y, t)\biggr )^{1/2}\\
        &\lesssim\biggl (\bariiint_{\tilde\Delta}|K_\K (A_{c\Delta,\Lambda}^+, X, Y, t)|\, \d\sigma_\K( X, Y, t)\biggr ),
        \end{split}
\end{equation}
for all $\tilde\Delta\subset\Delta$.  In fact, we claim that it suffices to prove  \eqref{Bqrepeat} for $\tilde\Delta=\Delta$. To see this, we assume that
\eqref{Bqrepeat} holds for all $\Delta$ with $\tilde\Delta$ replaced by $\Delta$, and we start by noting that we have the representations
\begin{align*}%\label{G3}
 K_\K(A_{c\Delta,\Lambda}^+, X, Y, t)&=A( x)\nabla_{ X}G_\K (A_{c\Delta,\Lambda}^+, X, Y, t)\cdot N( X)\notag\\
 &=\frac {\d\omega_{\K}}{\d\sigma_\K}(A_{c\Delta,\Lambda}^+,X,Y,t)\\
 &=\lim_{{\hat r}\to 0}\frac {\omega_{\K}(A_{c\Delta,\Lambda}^+, \Delta_{\hat r}(X,Y,t))}{\sigma_\K(\Delta_{\hat r}(X,Y,t))},
\end{align*}
for $(X,Y,t)\in\Delta$. Consider $(X,Y,t)\in\tilde\Delta$ and $\hat r>0$ small. Writing $\hat\Delta:=\Delta_{\hat r}(X,Y,t)$ and
\begin{align}\label{G3A}
\frac {\omega_{\K}(A_{c\Delta,\Lambda}^+, \hat\Delta)}{\sigma_\K(\hat\Delta)}&=\frac {\omega_{\K}(A_{c\Delta,\Lambda}^+, \hat\Delta)\  \omega_{\K}(A_{c\tilde\Delta,\Lambda}^+, \hat\Delta)}{\omega_{\K}(A_{c\tilde \Delta,\Lambda}^+, \hat\Delta)\  \sigma_\K(\hat\Delta)},
\end{align}
we  first apply Lemma \ref{greenmeasurerelation} to deduce
\begin{align}\label{G3B}
\frac {\omega_{\K}(A_{c\Delta,\Lambda}^+, \hat\Delta)}{\omega_{\K}(A_{c\tilde \Delta,\Lambda}^+, \hat\Delta)}\lesssim\frac {G_{\K}(A_{c\Delta,\Lambda}^+, A_{c\hat \Delta,\Lambda}^-)}{G_{\K}(A_{c\tilde \Delta,\Lambda}^+, A_{c\hat \Delta,\Lambda}^-)}.
\end{align}
Next, applying Theorem \ref{thm:quotients} in \eqref{G3B}, and passing to the limit by letting $\hat r\to 0$ in \eqref{G3A},
\begin{align*}%\label{G3}
 K_\K(A_{c\Delta,\Lambda}^+, X, Y, t)\lesssim  \frac {G_{\K}(A_{c\Delta,\Lambda}^+, A_{4\tilde \Delta,\Lambda}^-)}{G_{\K}(A_{c\tilde \Delta,\Lambda}^+, A_{4\tilde \Delta,\Lambda}^-)} K_\K(A_{c\tilde\Delta,\Lambda}^+, X, Y, t).
\end{align*}
Using this, and \eqref{Bqrepeat} with $\Delta$ replaced by $\tilde\Delta$ (which holds by the assumption), we deduce
\begin{align}\label{Bqrepeata}
        &\biggl (\bariiint_{\tilde\Delta}|K_\K (A_{c\Delta,\Lambda}^+, X, Y, t)|^2\, \d\sigma_\K( X, Y, t)\biggr )^{1/2}\lesssim \frac {G_{\K}(A_{c\Delta,\Lambda}^+, A_{4\tilde \Delta,\Lambda}^-)}{G_{\K}(A_{c\tilde \Delta,\Lambda}^+, A_{4\tilde \Delta,\Lambda}^-)\ \sigma_\K(\tilde\Delta)}.
\end{align}
However, again using the bound $G_{\K}(A_{c\tilde \Delta,\Lambda}^+, A_{4\tilde \Delta,\Lambda}^-)\gtrsim \tilde r^{2-{\bf q}}$, see \eqref{clam}, we see that
\begin{align}\label{Bqrepeatb}
 \frac {G_{\K}(A_{c\Delta,\Lambda}^+, A_{4\tilde \Delta,\Lambda}^-)}{G_{\K}(A_{c\tilde \Delta,\Lambda}^+, A_{4\tilde \Delta,\Lambda}^-)}\frac 1{\sigma_\K(\tilde\Delta)}\lesssim
 \tilde r^{-1}G_{\K}(A_{c\Delta,\Lambda}^+, A_{4\tilde \Delta,\Lambda}^-).
\end{align}
Next, using Lemma \ref{greenestimate}, Lemma \ref{greenmeasurerelation} and Theorem \ref{thm:doub}, in that order, we deduce
\begin{align}\label{Bqrepeatc}
G_{\K}(A_{c\Delta,\Lambda}^+, A_{4\tilde \Delta,\Lambda}^-)\lesssim \tilde r^{2-{\bf q}}\omega_{\K}(A_{c\Delta,\Lambda}^+, \tilde \Delta),
\end{align}
and hence, by combining the estimates above, see that
\begin{align}\label{Bqrepeatd}
        &\biggl (\bariiint_{\tilde\Delta}|K_\K (A_{c\Delta,\Lambda}^+, X, Y, t)|^2\, \d\sigma_\K( X, Y, t)\biggr )^{1/2}\lesssim \frac {\omega_{\K}(A_{c\Delta,\Lambda}^+, \tilde \Delta)}{\sigma_\K(\tilde\Delta)},
\end{align}
which completes the proof of our claim.

Based on the above it remains to prove  \eqref{Bqrepeat} for $\tilde\Delta=\Delta$ and the rest of the proof is devoted to this. We note that we can without loss of generality assume that $(X_0,Y_0 ,t_0)=(0,0,0)$. A key observation in the following argument, and this is a consequence of that $A$ and $\Omega$ are independent of $(Y,t)$, is that
\begin{eqnarray*}%\label{G3}
\mbox{$ K_\K(\hat X,\hat Y,\hat t, X, Y, t)$ depends on $(\hat Y,\hat t,  Y, t)$ only through the differences $(\hat Y-Y)$, $(\hat t-t)$.}
\end{eqnarray*}
In particular,
\begin{eqnarray}\label{G3}
K_\K(\hat X,\hat Y,\hat t, X, Y, t)=K_\K(\hat X,\hat Y-Y,\hat t-t,X,0,0).
\end{eqnarray}
Note that $ \Delta $ is invariant under the change of coordinates $( X, Y, t)\to ( X,- Y,- t)$. Hence,
\begin{align*}
I&:=\iiint_\Delta |K_\K(A_{c\Delta,\Lambda}^+ , X, Y, t)|^2\, \d\sigma_\K( X, Y, t)\notag\\
&=(-1)^{m+1}\iiint_{\Delta }|K_\K(A_{c\Delta,\Lambda}^+, X,- Y,- t)|^2\, \d\sigma_\K( X, Y, t).
\end{align*}
Using \eqref{G3},  Harnack's inequality, i.e. Lemma \ref{harnack}, and more specifically Lemma \ref{lem4.7}, we see that
\begin{eqnarray*}
K_\K(A_{c\Delta,\Lambda}^+, X,-  Y,- t)\lesssim K_\K(A_{4c \Delta ,\Lambda}^+, X, Y, t)
\end{eqnarray*}
for all $( X, Y, t)\in \Delta $. Hence,
\begin{eqnarray}
|K_\K(A_{c\Delta,\Lambda}^+, X,-  Y,- t)|^2\lesssim K_\K(A_{c\Delta,\Lambda}^+, X,-  Y,- t)K_\K(A_{4c \Delta ,\Lambda}^+, X, Y, t)
\end{eqnarray}
for all $( X, Y, t)\in \Delta $. Let $$\phi\in C_0^\infty
(\mathbb R^{N+1}\setminus\bigl(\{A_{c\Delta,\Lambda}^+\}\cup\{A_{4c\Delta ,\Lambda}^+\}\bigr)$$ be such that
  \begin{eqnarray}\label{G5ap}
  \phi( X, Y, t)=1,
\end{eqnarray}
whenever  $( X, Y, t)=((x,x_m),Y,t)$ is such that $( x, Y, t)\in [-r,r]^{m-1}\times
[-r^3,r^3]^m\times[-r^2,r^2]$, $ x_m\in [\psi(x)-r/16,\psi(x)+r/16]$, and
  \begin{eqnarray}\label{G5ap+}
  \phi( X, Y, t)=0,
\end{eqnarray}
whenever $( X, Y, t)=((x,x_m),Y,t)$ is in the complement of the set defined through the restrictions $( x, Y,  t)\in[-2r,2r]^{m-1}\times
[-(2r)^3,(2r)^3]^m\times[-(2r)^2,(2r)^2]$, $ x_m\in [\psi(x)-r/8,\psi(x)+r/8]$. Furthermore, we choose $\phi$ so that
  \begin{eqnarray}\label{G5ap++}
   |\nabla_X\phi( X, Y, t)|\lesssim r^{-1},\ |(X\cdot\nabla_Y-\partial_t)\phi( X, Y, t)|\lesssim r^{-2},
\end{eqnarray}
whenever $( X, Y, t)\in \mathbb R^{N+1}$. We introduce
\begin{eqnarray}
   v( X, Y, t):=G_\K (A_{c\Delta,\Lambda}^+, X,- Y,-  t),\ \tilde v( X, Y, t):=G_\K (A_{4c\Delta ,\Lambda}^+, X, Y, t),
     \end{eqnarray}
     and
  \begin{eqnarray}
\Psi( X, Y, t):= \phi( X, Y, t)\partial_{ x_m}v( X, Y, t).
  \end{eqnarray}
Recalling that $\mathcal{L}_{X,Y,t}^\ast=\nabla_X\cdot(A(X)\nabla_X)-X\cdot\nabla_Y+\partial_t$ and using the definition of the Green function, we see that
\begin{align*}
0&=\iiint_{\Omega\times\mathbb R^m\times\mathbb R} \mathcal{L}^\ast G_\K (A_{4c\Delta ,\Lambda}^+, X, Y, t)\Psi( X, Y, t)\, \d X\d Y\d t\notag\\
&=\iiint_{\Omega\times\mathbb R^m\times\mathbb R} \mathcal{L}^\ast \tilde v( X, Y, t)\Psi( X, Y, t)\, \d X\d Y\d t.
\end{align*}
Hence
\begin{align*}
0=&\iiint_{\Omega\times\mathbb R^m\times\mathbb R} \bigl (\mathcal{L}^\ast \tilde v( X, Y, t)\Psi( X, Y, t)- \tilde v( X, Y, t)\mathcal{L}\Psi( X, Y, t)\bigr )\, \d X\d Y\d t\notag\\
&+\iiint_{\Omega\times\mathbb R^m\times\mathbb R} \tilde v( X, Y, t)\mathcal{L}\Psi( X, Y, t)\, \d X\d Y\d t.
\end{align*}
Using this identity, and integrating by parts,
\begin{equation}\label{G4a}
\begin{split}
0=& \ \iiint_{\partial \Omega\times\mathbb R^m\times\mathbb R} { K_\K(A_{4c\Delta ,\Lambda}^+, X, Y, t) } \Psi( X, Y, t)\, \d \sigma_\K ( X, Y, t)\\
&+\iiint_{\Omega\times\mathbb R^m\times\mathbb R} \tilde v( X, Y, t)\mathcal{L}\Psi( X, Y, t)\, \d X\d Y\d t.
\end{split}
\end{equation}
Note that by construction, $\Psi( X, Y, t)=\partial_{ x_m}v( X, Y, t)$ if $( X, Y, t)\in \Delta $. Consider the vector field $A( x)N(  X)$. Obviously, $A( x)N( X)\cdot N(  X)\leq \kappa$ by the boundedness of $A$ and hence we can write
$$e_m=T( X)+c( X)A( x)N( X)$$
for all $( X, Y, t)\in \Delta $ and for some function $c(\cdot)$ such that $c(  X)\geq  c(m,\kappa,M)$ for all $( X, Y, t)\in \Delta $. Here $T(  X)$ denote a vector tangent to $\partial\Omega$ at $   X$. Using these observations we see that
\begin{eqnarray*}
\Psi( X, Y, t)=\partial_{  x_m}v( X, Y, t)=c(   X)A(   x)N(  X)\cdot\nabla_{   X}v( X, Y, t)
\end{eqnarray*}
whenever $( X, Y, t)\in \Delta $. In particular, using this and the fact that $K_\K(A_{c\Delta ,\Lambda}^+)$ {and $\Psi$ are} non-negative functions,
\begin{eqnarray*}
I\lesssim \biggl |\iiint_{\Omega\times\mathbb R^m\times\mathbb R} \tilde v( X, Y, t)\mathcal{L}\Psi( X, Y, t)\, \d  X \d Y\d  t\biggr |.
\end{eqnarray*}
We next observe that
\begin{align*}%\label{G4c}
\mathcal{L}\Psi( X, Y, t)&=(\nabla_{  X}(A(   x)\nabla_{   X})+{   X}\cdot\nabla_{  Y}-\partial_{   t})\Psi\\
&=2A(   X)\nabla_{   X}(\partial_{  x_m}v)\nabla_{  X}\phi+\partial_{ x_m}v\mathcal{L}\phi+\phi\mathcal{L}(\partial_{x_m}v),
\end{align*}
and that
\begin{eqnarray*}%\label{G4c}
\mathcal{L}v( X, Y, t)=\mathcal{L}(G(A_{c\Delta,\Lambda}, X,- Y,- t))=(\mathcal{L}^\ast G_\K )(A_{c\Delta,\Lambda}, X,- Y,- t)=0.
\end{eqnarray*}
Using this we see that
$$\mathcal{L}(\partial_{ x_m}v)=\mathcal{L}(\partial_{ x_m}v)-\partial_{ x_m}\mathcal{L}(v)=\partial_{ y_m}v.$$
In particular,
\begin{eqnarray*}%\label{G4c}
\mathcal{L}\Psi( X, Y, t)=2A(  x)\nabla_{  X}(\partial_{ x_m}v)\nabla_{  X}\phi+\partial_{ x_m}v\mathcal{L}\phi+\phi\partial_{ y_m}v.
\end{eqnarray*}
We note that these calculations essentially only uses that $A$ is independent of $x_m$. Recall that $\phi$ satisfies \eqref{G5ap}-\eqref{G5ap++} and let $E=(\Omega\times\R^m\times\R)\cap\overline{\{( X, Y, t)\mid \phi( X, Y, t)\neq 0\}}$.  Using this notation and  elementary manipulations,
\begin{eqnarray*}%\label{G4e}
I\lesssim I_1+I_2+I_3+I_4
\end{eqnarray*}
where
\begin{align*}%\label{G4g}
I_1&:=r^{-2}\iiint_{E} |\nabla_{ X}G_\K (A_{c\Delta,\Lambda}^+, X,- Y,- t)|G_\K (A_{4c \Delta ,\Lambda}^+, X, Y, t)\, \d   X \d  Y\d   t,\notag\\
I_2&:=r^{-1}\iiint_{E} |\nabla_{ X}G_\K (A_{c\Delta,\Lambda}^+, X,- Y,- t)|\nabla_{ X} G_\K (A_{4c \Delta ,\Lambda}^+, X, Y, t)|\, \d   X \d  Y\d   t,\notag\\
I_3&:=r^{-1}\iiint_{E} |\nabla_{ X}\partial_{ x_m}G_\K (A_{c\Delta,\Lambda}^+, X,- Y,- t)|G_\K (A_{4c \Delta ,\Lambda}^+, X, Y, t)\, \d   X \d  Y\d   t,\notag\\
I_4&:=\iiint_{E} |\partial_{ y_m}G_\K (A_{c\Delta,\Lambda}^+, X,- Y,- t)|G_\K (A_{4c \Delta ,\Lambda}^+, X, Y, t)\, \d   X \d  Y\d   t.
\end{align*}
Using the energy estimate of Lemma \ref{lem1en}, and that
$$|G_\K (A_{c\Delta,\Lambda}^+, X,- Y,- t)|+|G_\K (A_{4c \Delta ,\Lambda}^+, X, Y, t)|\lesssim r^{2-{\bf q}}$$
whenever $(X,Y,t)\in E$,
we deduce that
\begin{eqnarray*}%\label{G4h}
I_1+I_2\lesssim \sigma_\K(\Delta )^{-1}.
\end{eqnarray*}
Similarly, using a slightly more involved argument, a Whitney decomposition, Lemma \ref{lem1en} and that the fact that $A$ is independent of $x_m$, {we can proceed in a similar manner as in the proof of Lemma 2.6 in \cite{N1} to also deduce that}
\begin{eqnarray*}%\label{G4h}
I_3+I_4\lesssim \sigma_\K(\Delta )^{-1}.
\end{eqnarray*}
Putting these estimates together we can conclude that
\begin{align*}
\iiint_\Delta |K_\K(A_{c\Delta,\Lambda}^+ , X, Y, t)|^2\, \d\sigma_\K( X, Y, t)=I\lesssim \sigma_\K(\Delta )^{-1},
\end{align*}
whenever $\Delta \subset\partial\Omega\times\R^m\times\R$. Furthermore, as $1\lesssim \omega_\K(A_{c\Delta,\Lambda}^+ ,\Delta )$, we have
\begin{eqnarray*}
\biggl (\bariiint_\Delta |K_\K(A_{c\Delta,\Lambda}^+ , X, Y, t)|^2\, \d\sigma_\K\biggr )^{1/2}\lesssim \biggl (\bariiint_{\Delta }|K_\K(A_{c\Delta,\Lambda}^+ ,X,Y,t)|\, \d\sigma_\K\biggr ),
\end{eqnarray*}
which is  \eqref{Bqrepeat} with $\tilde\Delta=\Delta$. This completes the proof.

\section{Applications to homogenization: Theorem \ref{th:homogenization}}\label{sec4}

By making the change of variables
$(X,Y,t)\mapsto(\tilde X,\tilde  Y,\tilde  t)$, $(X,Y,t)=(\eps \tilde X,\eps^3\tilde Y,\eps^2 \tilde t)$, the boundary
$$\partial \Omega \times \mathbb R^m\times \mathbb R =\{(X,Y,t)\in \mathbb R^m\times \mathbb R^m\times \mathbb R \mid x_m=\psi(x)\}$$
is transformed into
$$\partial \Omega_\eps\times \mathbb R^m\times \mathbb R := \{(\tilde X,\tilde Y,\tilde t)\in \mathbb R^m\times \mathbb R^m\times \mathbb R\mid \tilde x_m=\psi_\eps(\tilde x)\},$$ where
$\psi_\eps(x):=\eps^{-1}\psi(\eps x)$.
Note that $\psi$ and $\psi_\eps$ have the same Lipschitz constant. Let
$$v_\eps(\tilde X,\tilde Y,\tilde t) := u_\eps(X,Y,t),\
f_\eps(\tilde x,\psi_\eps(\tilde x),\tilde Y,\tilde t) := f(x,\psi(x), Y, t).$$
Then,
\begin{equation}\label{epsscalea}
\left\{\begin{aligned}
\L_\K^\eps u_\eps & = 0 \quad \text{in }\Omega\times \mathbb R^m\times \mathbb R,\\
u_\eps &= f \quad \text{n.t. on }\partial \Omega\times \mathbb R^m\times \mathbb R,
\end{aligned}\right.
\end{equation}
where $\L_\K^\eps$ is as in \eqref{Lepsilon}, if and only if
\begin{equation}\label{rescale}
\left\{\begin{aligned}
\L_\K^1 v_\eps & = 0 \quad \text{in }\Omega_\eps\times \mathbb R^m\times \mathbb R,\\
v_\eps &=f_\eps \ \  \text{n.t. on }\partial \Omega_\eps\times \mathbb R^m\times \mathbb R.
\end{aligned}\right.
\end{equation}
 By Theorem \ref{per1} we see that  \eqref{rescale} has a unique weak solution which satisfies
 \[
\|N(v_\eps)\|_{L^p(\partial \Omega_\eps\times \mathbb R^m\times \mathbb R,\d\sigma_\K)}
	 \lesssim \|f_\eps\|_{L^p(\partial \Omega_\eps\times \mathbb R^m\times \mathbb R,\d\sigma_\K)}.
\]
Changing back to the $(X,Y,t)$ coordinates, we get that \eqref{epsscalea} has a unique weak solution verifying the estimate
\begin{equation}\label{ubdedWuurell}
\|N(u_\eps)\|_{L^p(\partial \Omega\times \mathbb R^m\times \mathbb R,\d\sigma_\K)}
	 \lesssim \|f\|_{L^p(\partial \Omega\times \mathbb R^m\times \mathbb R,\d\sigma_\K)},
\end{equation}
and in the last two displays the implicit constants are also allowed to depend on $p$, but are independent of $\epsilon$ and $f$. This settles the proof of the first part of  Theorem \ref{th:homogenization}.

To settle the proof of the second part of Theorem \ref{th:homogenization} we want to let $\epsilon\to 0$ and prove, given $f\in L^p(\partial \Omega\times \mathbb R^m\times \mathbb R,\d\sigma_\K)$, that $u_\eps\to\bar u$ and  that $\bar u$ is a weak solution to the Dirichlet problem
\begin{equation}\label{dpweakhomo}
\begin{cases}
	\bar \L_{\K}\bar u = 0  &\text{in} \ \Omega\times \mathbb R^m\times \mathbb R, \\
     \bar u = f  & \text{n.t. on} \ \partial \Omega\times \mathbb R^m\times \mathbb R,
\end{cases}
\end{equation}
and that
\begin{equation}\label{ubdedWuurell+}
\|N(\bar u)\|_{L^p(\partial \Omega\times \mathbb R^m\times \mathbb R,\d\sigma_\K)}
	 \lesssim \|f\|_{L^p(\partial \Omega\times \mathbb R^m\times \mathbb R,\d\sigma_\K)},
\end{equation}
where the implicit constant also is allowed to depend on $p$. Note that $\bar{A}$ is a constant matrix and once existence is established  uniqueness for the problem stated follows from the  uniqueness part of Theorem \ref{DPequiv}. We also note that in the following it suffices to consider the case $p=2$, again by the classical arguments in \cite{CF}.

 Consider $U_X\times U_Y\times J\subset\mathbb R^{N+1}$, where $U_X\subset\mathbb R^{m}$ and $U_Y\subset\mathbb R^{m}$ are bounded domains and $J=(a,b)$ with $-\infty<a<b<\infty$. Assume that $\overline{U_X\times U_Y\times J}$ is contained in  $\Omega\times\mathbb R^m\times\mathbb R$ and that the distance from
$\overline{U_X\times U_Y\times J}$ to $\partial\Omega\times\R^m\times\R$ is $r>0$. By a covering argument with cubes of size say $r/2$,  Lemma \ref{lem1}, and \eqref{ubdedWuurell}, it follows that
$u_\eps$ is uniformly bounded, with respect to $\eps$, in $L^2(U_X\times U_Y\times J)$ whenever $\overline{U_X\times U_Y\times J}\subset \Omega\times\mathbb R^m\times\mathbb R$. Using this, and the energy estimate of Lemma \ref{lem1en}, we can conclude that
\begin{equation}\label{ubdedWuu}
\mbox{$\|\nabla_X u_\eps\|_{L^2(U_X\times U_Y\times J)}$ is uniformly bounded in $\eps$}.
\end{equation}

Next, using \eqref{ubdedWuu} and the weak formulation of the equation
$\L_\K^\eps u_\eps=0$  it follows that $(X\cdot\nabla_Y-\partial_t)u_\eps$ is uniformly bounded, with respect to $\epsilon$, in  $L_{Y,t}^2(U_Y\times J,H_X^{-1}(U_X))$.
Let $ W(U_X\times U_Y\times J)$  be defined as in \eqref{weak1-+}. By the above argument we can conclude, whenever $\overline{U_X\times U_Y\times J}$ is compactly contained in $\Omega\times\mathbb R^m\times\mathbb R$, that
\begin{equation}\label{ubdedW}
    \mbox{$\|u_\eps\|_{W(U_X\times U_Y\times J)}$ is uniformly bounded in $\eps$},
\end{equation}
and, by ellipticity of $A^\eps$, that
\begin{equation}\label{Aebded}
   \|A^\epsilon \nabla_X u_\epsilon\|_{(L^2_{X,Y,t}(U_X\times U_Y \times J))^m} \text{ is uniformly bounded in }\epsilon.
\end{equation}
Using the Sobolev embedding theorem one can prove that there exists a compact injection
\begin{equation}
    W(U_X\times U_Y\times J) \longrightarrow L^2(U_X\times U_Y \times J).
\end{equation}
Using this, \eqref{ubdedW} and \eqref{Aebded} we see that there exists a subsequence of $\{ u_\epsilon\}$, still denoted $\{ u_\epsilon\}$, such that
\begin{equation}
    \begin{split}\label{conva0}
        u_\epsilon \rightarrow \bar u,\:&\text{ in } L^2(U_X\times U_Y \times J),\\
        A^\epsilon\nabla_X u_\epsilon \rightarrow \xi, \:&\text{ weakly in } (L^2(U_X\times U_Y \times J))^m,\\
        (X\cdot\nabla_Y-\partial_t)u_\epsilon \rightarrow (X\cdot\nabla_Y-\partial_t)\bar u, \:&\text{ weakly in } L^2_{Y,t}(U_Y\times J,H_X^{-1}(U_X)).
    \end{split}
\end{equation}
In particular,
\[
u_\eps \rightarrow \bar u,\text{ weakly in } W(U_X\times U_Y\times J).
\]
Furthermore, using this and the local regularity estimate in Lemma \ref{lem1+} we also have that
\begin{align*}
\mbox{$u_\eps \to \bar u$,  locally uniformly in $\Omega\times \mathbb R^m\times \mathbb R$ as $\eps\to 0$}.
\end{align*}

We now have sufficient information to pass to the limit in the weak formulation of the equation $\L_\mathcal{K}^\eps u_\eps=0$ and doing so we obtain
\begin{equation}\label{eq:xieq}
    \begin{split}
     0 =&\iiint_{U_X\times U_{Y}\times J}\ \xi\cdot \nabla_X\phi\, \d X \d Y \d t\\
    &+\iint_{U_{Y}\times J}\ \langle (-X\cdot\nabla_Y+\partial_t)\bar u(\cdot,Y,t),\phi(\cdot,Y,t)\rangle\, \d Y \d t,
\end{split}
\end{equation}
for all $ \phi\in L_{Y,t}^2(U_{Y}\times J,H_{X,0}^1(U_X))$.
We need to show that $\xi = \bar{A}\nabla_X\bar u$. To this end, we consider the functions
\begin{equation}
    w_\alpha^\epsilon(X) := \epsilon w_\alpha({X}/{\epsilon}),
\end{equation}
with $w_\alpha$ defined as in \eqref{barA}.
Following \cite{CD}, we see that
\begin{equation}\label{conva1}
    \begin{split}
        w_\alpha^\epsilon \rightarrow \alpha\cdot X,&\:\text{ weakly in } H_X^1(U_X),\\
        w_\alpha^\epsilon \rightarrow \alpha\cdot X,&\:\text{ in } L^2(U_X).
    \end{split}
\end{equation}
In particular
\begin{equation}\label{conva2}
    A^\epsilon\nabla_X w_\alpha^\epsilon \rightarrow \bar{A}\alpha,\:\text{ weakly in }(L^2(U_X))^m,
\end{equation}
and
\begin{equation}\label{eq:osceq}
    \int{A^\epsilon(X)\nabla_X w_\alpha^\epsilon\cdot\nabla_X\phi \d X} = 0,
\end{equation}
for all $\phi\in C^\infty_0(U_X)$ (see \cite{CD}, Section 8.1).

Pick $\varphi\in C^\infty_0(U_X)$, $\psi\in C^\infty_0(U_Y\times J)$. We choose $\phi=\varphi u_\epsilon \psi$ in \eqref{eq:osceq}, and integrate with respect to $Y$ and $t$,
\begin{equation}\label{eq:testrelation}
\begin{split}
   0=& \iiint{(A^\epsilon(X)\nabla_X w_\alpha^\epsilon\cdot\nabla_X u_\epsilon) \varphi\psi \, \d X \d Y \d t}\\
   &+
    \iiint{(A^\epsilon(X)\nabla_X w_\alpha^\epsilon\cdot\nabla_X \varphi) u_\epsilon \psi \, \d X \d Y \d t}.
\end{split}
\end{equation}
Picking $\varphi w_\alpha^\epsilon\psi$ as a test function in the weak formulation of $\L_\mathcal{K}^\eps u_\eps=0$ yields
\begin{align*}
    0=&\iiint\ ((A^\epsilon(X)\nabla_Xu_\eps\cdot \nabla_Xw_\alpha^\epsilon)\varphi\psi + (A^\epsilon(X)\nabla_Xu_\eps\cdot \nabla_X\varphi)w_\alpha^\epsilon\psi)\, \d X \d Y \d t\notag\\
    &+\iiint\ (X\cdot \nabla_Y\psi-\partial_t\psi)\varphi w_\alpha^\epsilon u_\eps\, \d X \d Y \d t,
\end{align*}
where we have used that $\varphi$ and $w_\alpha^\epsilon$ only depend on $X$ and that $\psi$ only depends on $Y$ and $t$.
Subtracting the expression in the last display  from \eqref{eq:testrelation} yields
\begin{align}\label{reella}
    0=&\iiint((A^\epsilon(X)\nabla_X w_\alpha^\epsilon \cdot \nabla_X \varphi)u_\epsilon \psi - (A^\epsilon(X)\nabla_X u_\epsilon \cdot \nabla_X\varphi)w_\alpha^\epsilon\psi)\, \d X \d Y\d t\notag\\
    &-\iiint(X\cdot\nabla_Y\psi - \partial_t\psi)\varphi w_\alpha^\epsilon u_\epsilon\, \d X \d Y\d t.
\end{align}
Using \eqref{conva0}, \eqref{conva1}, and \eqref{conva2}, we see that
\begin{align*}
\iiint{\left((A^\epsilon(X)\nabla_X w_\alpha^\epsilon \cdot \nabla_X \varphi)u_\epsilon \psi\right) \d X \d Y\d t}&\to
\iiint\left((\bar{A}\alpha \cdot \nabla_X \varphi)\bar u \psi\right) \d X \d Y\d t,\notag\\
\iiint(A^\epsilon(X)\nabla_X u_\epsilon \cdot \nabla_X\varphi)w_\alpha^\epsilon\psi \d X \d Y\d t&\to  \iiint(\xi \cdot \nabla_X\varphi)(\alpha\cdot X)\psi \d X \d Y\d t,\notag\\
\iiint{(X\cdot\nabla_Y\psi - \partial_t\psi)\varphi w_\alpha^\epsilon u_\epsilon\, \d X \d Y\d t}&\to \iiint(X\cdot\nabla_Y\psi - \partial_t\psi) (\alpha\cdot X) \varphi \bar u \d X \d Y\d t,
\end{align*}
as $\epsilon\to 0$. I.e.,  passing to the limit in \eqref{reella} we obtain
\begin{equation*}
    \iiint{\left((\bar{A}\alpha \cdot \nabla_X \varphi)\bar u \psi - (\xi \cdot \nabla_X\varphi)(\alpha\cdot X)\psi - (X\cdot\nabla_Y\psi - \partial_t\psi) (\alpha\cdot X) \varphi \bar u\right) \d X \d Y\d t}=0.
\end{equation*}
Using that
\[
(\nabla_X\varphi)(\alpha\cdot X)\psi = \nabla_X(\varphi(\alpha\cdot X)\psi) - \alpha\varphi\psi,
\]
and  \eqref{eq:xieq}, now with $\phi=(\alpha\cdot X)\varphi\psi$ as test function, we get
\begin{equation}
    \iiint{\left((\bar{A}\alpha \cdot \nabla_X \varphi)\bar u \psi - (\xi\cdot\alpha)\varphi\psi \right) \d X \d Y\d t}=0.
\end{equation}
Since $\bar{A}$ is constant, this implies that
\[
\xi\cdot\alpha = (\bar{A}\nabla_X \bar u)\cdot \alpha,\:\text{ for all }\alpha\in\R^m,
\]
consequently, $\xi = \bar{A}\nabla_X \bar u$. In particular, $\{u_\eps\}_{\eps>0}$ has a subsequence that converges weakly to $\bar  u$  and $\bar  u$ is a weak solution to  $\bar\L_{\K}\bar u=0$ in $\Omega\times\mathbb R^m\times\mathbb R$.

Next, assume  that $f\in C_0(\partial \Omega\times\R^m\times\R)$. Then
\begin{equation}\label{repo}
u_\eps(X,Y,t)=\iiint K_\eps(X,Y,t,\tilde X,\tilde Y,\tilde t)f(\tilde X,\tilde Y,\tilde t)\d\sigma_\K(\tilde X,\tilde Y,\tilde t),
\end{equation}
and we  need to extract a convergent subsequence from the sequence of kernels $\{K_\eps\}$.  Using the representation in \eqref{repo} we see that if  \begin{equation}\label{xtlambda}(X,Y,t)\in U_X\times U_Y\times J\text{ and }\text{dist}(U_X,\partial\Omega\times\R^m\times\R)\ge 2r,\end{equation}
then as above, i.e. again using a covering argument, Lemma \ref{lem1} and \eqref{ubdedWuurell}, we deduce that
\[
\left|\iiint K_\eps(X,Y,t,\tilde X,\tilde Y,\tilde t)f(\tilde X,\tilde Y,\tilde t)\d\sigma_\K(\tilde X,\tilde Y,\tilde t)\right| = |u_\eps(X,Y,t)|\le
c\|f\|_{L^2(\partial \Omega\times \mathbb R^m\times \mathbb R,\d\sigma_\K)},
\]
for some positive constant $c<\infty$ independent of $\epsilon$. It thus follows by duality that $$\|K_\eps(X,Y,t,\cdot,\cdot,\cdot)\|_{L^2(\partial \Omega\times \mathbb R^m\times \mathbb R,\d\sigma_\K)}$$ is bounded uniformly in $\eps$, for $(X,Y,t)$ as in \eqref{xtlambda}. This clearly implies that \[\|K_\eps\|_{L^2( U_X\times U_Y\times J\times \partial \Omega\times \mathbb R^m\times \mathbb R,\d X\d Y\d t\d\sigma_\K)}\] is bounded uniformly in $\eps$. Thus, for a subsequence,
$$ K_\eps \longrightarrow \bar K\text{ as } \eps \to 0, \ \text{weakly in } L^2( U_X\times U_Y\times J\times\partial \Omega\times \mathbb R^m\times \mathbb R,\d X\d Y\d t\d\sigma_\K).$$

Suppose now that $\{u_{\eps_j}\}$ converges weakly in $W(U_X\times U_Y\times J)$ to $\bar u$. Then, by the above argument there exists a subsequence  $\{\eps_{j'}\}$ of $\{\eps_j\}$
such that $K_{\eps_{j'}}$ converges weakly to $\bar K$ in $L^2( U_X\times U_Y\times J\times\partial \Omega\times \mathbb R^m\times \mathbb R,\d X\d Y\d t\d\sigma_\K)$. This implies, as $u_\eps(X,Y,t)\to \bar u(X,Y,t)$, and by continuity for all $(X,Y,t)$ as in \eqref{xtlambda}, that
\begin{align*}
u_\eps(X,Y,t)&=\iiint K_\eps(X,Y,t,\tilde X,\tilde Y,\tilde t)f(\tilde X,\tilde Y,\tilde t)\d\sigma_\K(\tilde X,\tilde Y,\tilde t)\\
&\to \iiint \bar K(X,Y,t,\tilde X,\tilde Y,\tilde t)f(\tilde X,\tilde Y,\tilde t)\d\sigma_\K(\tilde X,\tilde Y,\tilde t)=\bar u(X,Y,t),
\end{align*}
as $\epsilon\to 0$ and for all $(X,Y,t)$ as in \eqref{xtlambda}. As $U_X\times U_Y\times J$ is arbitrary in this argument, we conclude that for a certain subsequence of $\{u_\eps\}_{\eps>0}$,
$$u_\eps \longrightarrow \bar u \text{ weakly in } W_{\text{loc}}(\Omega\times\mathbb R^m\times \mathbb R),$$
and
\begin{equation}\label{eq:Lloc}
    K_\eps \longrightarrow \bar K \text{ weakly in } L^2_{\text{loc}}(\Omega\times\mathbb R^m\times \R\times\partial \Omega\times \mathbb R^m\times \mathbb R,\d X\d Y\d t\d\sigma_\K).
\end{equation}
Furthermore,
\begin{equation*}
\bar\L_\K\bar u=0\quad\text{in }\Omega\times\mathbb R^m\times\mathbb R,
\end{equation*}
and
\begin{align*}
\bar u(X,Y,t)= \iiint \bar K(X,Y,t,\tilde X,\tilde Y,\tilde t)f(\tilde X,\tilde Y,\tilde t)\d\sigma_\K(\tilde X,\tilde Y,\tilde t),
\end{align*}
whenever $(X,Y,t)\in\Omega\times\mathbb R^m\times \mathbb R$.  Note that the space $$L^2_{\text{loc}}(\Omega\times\mathbb R^m\times \R\times\partial \Omega\times \mathbb R^m\times \mathbb R,\d X\d Y\d t\d\sigma_\K)$$ in \eqref{eq:Lloc} should be interpreted as local only in the first three variables $X$, $Y$ and $t$. As $\bar{A}$ is a constant matrix, the Kolmogorov measure $\omega_{\bar\L_{\K}}$ is absolutely continuous with respect to $\sigma_\K$ and this can be seen as a consequence of  Theorem \ref{per1-}. In particular, the problem $D_{\K}^2(\partial \Omega\times \mathbb R^m\times \mathbb R,\d\sigma_\K)$ is  uniquely solvable for the operator $\bar\L_{\K}$ and $\bar K(X,Y,t,\tilde X,\tilde Y,\tilde t)$
is the Radon-Nikodym derivative of the Kolmogorov measure $\omega_{\bar\L_{\K}}(X,Y,t,\cdot)$ with respect to $\sigma_\K$ at $(\tilde X,\tilde Y,\tilde t)\in \partial \Omega\times \mathbb R^m\times \mathbb R$. As a consequence, using Theorem \ref{DPequiv} we can conclude that for $f\in C_0(\partial \Omega\times\R^m\times\R)$ given, $\bar u$ is the unique solution to
the problem in \eqref{dpweakhomo} which satisfies \eqref{ubdedWuurell+}. For $f\in L^2(\partial \Omega\times \mathbb R^m\times \mathbb R,\d\sigma_\K)$ the same conclusion
follows from the density of $C_0(\partial \Omega\times\R^m\times\R)$ in $L^2(\partial \Omega\times \mathbb R^m\times \mathbb R,\d\sigma_\K)$, see the final part in the proof of $(i)$ implies $(ii)$ in Theorem \ref{DPequiv} for reference. Summing up, the  proof of Theorem \ref{th:homogenization} is complete.\\

\noindent{\bf Acknowledgement.} The authors like to thank two anonymous referees for valuable comments and suggestions.

\end{document}